\theoremstyle{plain}
\newtheorem{thm}{Theorem}%[section]
\newtheorem{cor}[thm]{Corollary}
\newtheorem{lem}[thm]{Lemma}
\newtheorem*{lem*}{Lemma~4}
\newtheorem{prop}[thm]{Proposition}
\newtheorem{conj}[thm]{Conjecture}
\theoremstyle{definition}
\newtheorem{defi}[thm]{Definition}
\newtheorem{rem}[thm]{Remark}
\newtheorem{example}[thm]{Example}
\newtheorem{problem}[thm]{Problem}
\newtheorem{obs}[thm]{Observation}
\newcommand{\ignore}[1]{}
\newcommand{\N}{\ensuremath{\mathcal N}}
\renewcommand{\P}{\ensuremath{\mathcal P}}
\newcommand{\B}{\mathcal B}
\newcommand{\X}{\mathcal X}
\newcommand{\n}{\mathbb N}
\newcommand{\nz}{\mathbb N_0}
\newcommand{\x}{\boldsymbol x}
\newcommand{\y}{\boldsymbol y}
\newcommand{\s}{\boldsymbol s}
\renewcommand{\ge}{\geqslant}
\renewcommand{\le}{\leqslant}
\renewcommand{\leq}{\leqslant}
\renewcommand{\ngeq}{\ngeqslant}
\newcounter{algsubstate}
\title{Subtraction games in more than one dimension} 
\author{Urban Larsson}
\address{Urban Larsson, IIT Bombay, India}
\email{larsson@iitb.ac.in}
\author{Indrajit Saha}
\address{Indrajit Saha, Kyushu University, Japan}
\email{indrajit@inf.kyushu-u.ac.jp}
\author{Makoto  Yokoo}
\address{Makoto  Yokoo, Kyushu University, Japan}
\email{yokoo@inf.kyushu-u.ac.jp}
\date{\today}
\begin{document}
%%%%%%%%%%%%%%%%%%%%%%%%%%%%%%%%%%%%%%%%%%%%%%%%%%%%%%%%%%%
\begin{abstract}
\label{abstract}
This paper concerns two-player alternating play combinatorial games (Conway 1976) in the normal-play convention, i.e. last move wins. Specifically, we study impartial vector subtraction games on tuples of nonnegative integers (Golomb 1966), with finite subtraction sets.  In case of two move rulesets we find a complete solution, via a certain $\mathcal{P}$-to-$\mathcal{P}$ principle (where $\mathcal{P}$ means that the previous player wins). Namely $x \in \mathcal{P}$ if and only if $x +a +b \in \mathcal{P}$, where $a$ and $b$ are the two move options. Flammenkamp 1997 observed that, already in one dimension, rulesets with three moves can be hard to analyze, and still today his related conjecture remains open. Here, we solve instances of rulesets with three moves in two dimensions, and conjecture that they all have regular outcomes. Through several computer visualizations of outcomes of multi-move two-dimensional rulesets, we observe that they tend to partition the game board into periodic mosaics on very few regions/segments, which can depend on the number of moves in a ruleset. For example, we have found a five-move ruleset with an outcome segmentation into six semi-infinite slices. In this spirit, we develop a coloring automaton that generalizes the $\mathcal{P}$-to-$\mathcal{P}$ principle. Given an initial set of colored positions, it quickly paints the $\mathcal{P}$-positions in segments of the game board. Moreover, we prove that two-dimensional rulesets have row/column eventually periodic outcomes. We pose open problems on the generic hardness of two-dimensional rulesets; several regularity conjectures are provided, but we also conjecture that not all rulesets have regular outcomes. 

\noindent\textbf{Keywords:} Combinatorial Game, Vector Subtraction Game, Impartial Game, Outcome Geometry, Outcome Pattern.  
\end{abstract}

%%%%%%%%%%%%%%%%%%%%%%%%%%%%%%%%%%%%%%%%%%%%%%%%%%%%%%%%%%%%%%%%%%%%%%%%%%%%%%%%%%%%

\maketitle
%%%%%%%%%%%%%%%%%%%%%%%%%%%%%%%%%%%%%%%%%%%%%%%%%%%%%%%%%%%%%%%%%%%%%%%%%%
\section{Introduction}
\label{sec:intro}
Figure~\ref{fig:dilbox} depicts a typical win-loss tessellation of games studied in this paper. Another example is the following children game. 
\begin{figure}[htbp!]
\begin{center}
\includegraphics[width=7.5cm]{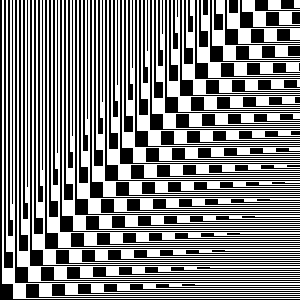}
\end{center}
\caption{``Diluted Boxes". The picture represents a win/loss tiling of a combinatorial game.%the game $S=\{(2,16),(13,1)\}$, size 600 by 600.
}
\label{fig:dilbox}
\end{figure}
%%%%%%%%%%%%%%%%%%%%%%%%%%%%%%%%%%%%%%%%%%%%%%%%%%%%%%%%%%%%%%%%%%%%%%%%%%

Squirrel plays a game with his best friend the Crow. On the table, there are two piles consisting of  $5$ peanuts and $6$ walnuts respectively. The players take turns to collect either $1$ peanut and $3$ walnuts or $2$ peanuts and $1$ walnut.
Suppose Crow is very hungry and that walnut is her favourite food. By her first move, she collects $1$ peanuts and $3$ walnuts. After this, the two piles consist of $4$ peanuts and $3$ walnuts. Now, when Squirrel copies the move of Crow then there is no further option for Crow. She is very disappointed. The next time she starts this game, she decides instead to grab $2$ peanuts and $1$ walnut. This leaves $3$ peanuts and $5$ walnuts on the table. Squirrel has a moment of realization: whatever he does, Crow will get the last move. He suggests to his friend that the next time they play, the player with the last move wins all the nuts, and that he gets to start. See Figure~\ref{fig:Crow} for how to win this game. 

We study a generalization of such games under the convention ``a player who cannot move loses'', which is called {\em normal-play}. Combinatorial games are two-player alternating-turn games with perfect and complete information, i.e., there is no hidden information and no chance move. 
  Popular recreational versions of combinatorial games are {\sc tic-tac-toe}, {\sc chess}, {\sc go}, {\sc checkers} and so on. Such games would admit infinite play if extra conditions were not imposed e.g. the ko-rule in {\sc go} and the fifty-move-draw convention in {\sc chess}. We will restrict attention to games where every sequence of play (not necessarily alternating) is finite. Moreover, we will restrict attention to {\em impartial} games, where, as in the ruleset {\sc squirrel \& crow}, the players have the same set of options, and each option is impartial. The prime example of such games is the ruleset  {\sc nim} \cite{bouton1901nim}. 
  
  A note on terminology used: the style {\sc ruleset} is used for a specific ruleset, but it is also used for a {\sc family of rulesets}. For example, we would write {\sc chess} but also {\sc board games}.

The perfect play {\em  outcome} of an impartial game is a loss or win depending on who starts \cite{berlekamp2004winning}. The starting player wins a game $G$ if $G\in\N$ (\N~ stands for the next player winning set), and otherwise $G\in \P$  (\P~ stands for the previous player winning set). Because of the normal play convention, any terminal position is in \P. As a variation of Zermelo's folk theorem \cite{zermelo1913anwendung}, the sets $\P$ and $\N$ partition the set of all game positions.

Let $\n=\{1,2,\ldots\}$ and $\nz=\n\cup\{0\}$ denote the positive and nonnegative integers respectively. The family of rulesets 
  {\sc finite subtraction} \cite{golomb1966mathematical, berlekamp2004winning} is another popular example of impartial games.\footnote{This class of rulesets is usually called ``subtraction games''. The word ``game'' is used widely without specific connotations, and sometimes it refers to a full ruleset. If we intend a specific game that people can play, then we may use the word position, starting position or ``game position''. Heap games are typically specified as  rulesets, and there are infinitely many possible starting positions.} Those are defined by a finite subtraction set $S\subset \n$, together with a position $x\in \nz$. By moving, a player subtracts their choice from the current position; the next position is $x-s$, for some $s\in S$, provided $s\le x$. If there is no such $s\in S$, then $x$ is a {\em terminal position}, and the current player loses.

 Let us define $d$-dimensional subtraction games, a.k.a the family of rulesets {\sc vector subtraction}. Let $d \in \n$.  The {\em game board} is $\B^d:=\nz^d $. When the dimension $d$ is understood, we may write $\B$ instead of $\B^d$. 
The standard partial order of integer vectors applies, $\boldsymbol x=(x_1,  \ldots,  x_d) \ge \boldsymbol y=(y_1, \ldots,  y_d)$
 if for all $i$, $x_i\ge y_i$. And $\boldsymbol x > \boldsymbol y$
 if $\x \ge \y$, and for some $i$, $x_i > y_i$. Addition, subtraction and scalar multiplication are component-wise, e.g. $\x +\y = (x_1+y_1,  \ldots,  x_d+y_d)$.

\begin{defi}[{\sc vector subtraction}~\cite{golomb1966mathematical,larsson2012operator}]
An instance of {\sc vector subtraction} is an impartial normal-play ruleset played on $d$-tuples of non-negative integers $\x\in \B^d$. A set of  $d$-tuples of nonnegative integers, $S\subseteq \B^d\setminus \{\boldsymbol 0\}$ defines the move options. A move consists in subtracting some $ \s \in  S$ from $\x$, for which  $\x-\s \ge \boldsymbol 0$. 
\end{defi}

{\sc vector subtraction} is included in the class of {\em invariant} games \cite{DuRi2010, LaHeFr2011} , in the sense that the validity of a move $\s$ does not depend on the current position $\x$ (provided that $\x-\s \ge \boldsymbol 0$).  

We study {\sc finite vector subtraction}, i.e. the instances for which $|S|<\infty$. 
Most parts of this paper concern  {\sc finite two-dimensional subtraction}. As an example of computations of the outcomes of such games, see  Figure~\ref{fig:dilbox} for the ruleset $S=\{(13,1),(2,16)\}$, and Figure~\ref{fig:Crow} for {\sc crow \& squirrel}. The black cells correspond to the \P-positions.

\begin{figure}[htbp!]
\begin{center}
\begin{tikzpicture}[scale = 0.5,
% [%%%%%%%%%%%%%%%%%%%%%%%%%%%%%%
         box/.style={rectangle,draw=gray!60!yellow,  thick, minimum size=.5 cm},
     ]%%%%%%%%%%%%%%%%%%%%%%%%%%%%%%

%\colorlet{yellow}{gray!60!yellow}
 %\draw[step=1cm,yellow, thick] (0, 0) grid (9,9); 

\node[box,fill=black] at (0,0){};  
\node[box,fill=black] at (0,1){};  
\node[box,fill=black] at (0,2){};  
\node[box,fill=black] at (0,3){};  
\node[box,fill=black] at (0,4){};  
\node[box,fill=black] at (0,5){};  
\node[box,fill=black] at (0,6){};  
\node[box,fill=black] at (0,7){};  
\node[box,fill=black] at (0,8){}; 
\node[box,fill=black] at (0,9){}; 
\node[box,fill=black] at (1,0){};  
\node[box,fill=black] at (2,0){};  
\node[box,fill=black] at (3,0){};  
\node[box,fill=black] at (4,0){};  
\node[box,fill=black] at (5,0){};  
\node[box,fill=black] at (6,0){};  
\node[box,fill=black] at (7,0){};  
\node[box,fill=black] at (8,0){}; 
\node[box,fill=black] at (9,0){}; 
\node[box,fill=black] at (1,1){}; 
\node[box,fill=black] at (1,2){};
\node[box,fill=black] at (3,4){};
\node[box,fill=black] at (3,5){};
\node[box,fill=black] at (3,6){};
\node[box,fill=black] at (3,7){};
\node[box,fill=black] at (3,8){};
\node[box,fill=black] at (3,9){};
\node[box,fill=black] at (4,4){};
\node[box,fill=black] at (4,5){};
\node[box,fill=black] at (4,6){};

\node[box,fill=black] at (4,2){};
\node[box,fill=black] at (5,2){};
\node[box,fill=black] at (6,2){};
\node[box,fill=black] at (7,2){};
\node[box,fill=black] at (8,2){};
\node[box,fill=black] at (9,2){};

\node[box,fill=black] at (5,4){};
\node[box,fill=black] at (6,4){};
\node[box,fill=black] at (7,4){};
\node[box,fill=black] at (8,4){};
\node[box,fill=black] at (9,4){};

\node[box,fill=black] at (7,6){};
\node[box,fill=black] at (8,6){};
\node[box,fill=black] at (9,6){};
\node[box,fill=black] at (6,8){};
\node[box,fill=black] at (6,9){};
\node[box,fill=black] at (7,9){};
\node[box,fill=black] at (7,8){};
\node[box,fill=black] at (8,8){};
\node[box,fill=black] at (9,8){};

 \foreach \x in {0,1,...,9}{
     \foreach \y in {0,1,...,9}
         \node[box] at (\x,\y){};
 }

% \draw (-.7, -0.7) node {$0$};
% \draw (-0.8, 9.5) node {$9$};
% %\draw (0, 0) node {$0$};
% \draw (9.5, -0.8) node {$9$};

% \draw (5, -1.5) node {Peanuts};
% \draw (-1.5, 5) node {Walnuts};

\draw (0.1 ,-1) node {$0$};
\draw (9, -1) node {$9$};

\draw (-1, 0.1) node {$0$};
\draw (-1, 9) node {$9$};

\draw (5, -1.5) node {Peanuts};
\draw (-2.5, 5) node {Walnuts};
\draw (5,6) node {$\boldsymbol x$};
\end{tikzpicture}
\end{center}
\caption{The picture illustrates the initial outcomes of the ruleset {\sc crow \& squirrel}, i.e. $S =\{(2,1), (1,3)\}$. Here $\boldsymbol x=(5,6)$ represents the given starting position in the second paragraph. The black cells correspond to the losing positions for the current player, a.k.a. the \P-positions.}
\label{fig:Crow}
\end{figure}
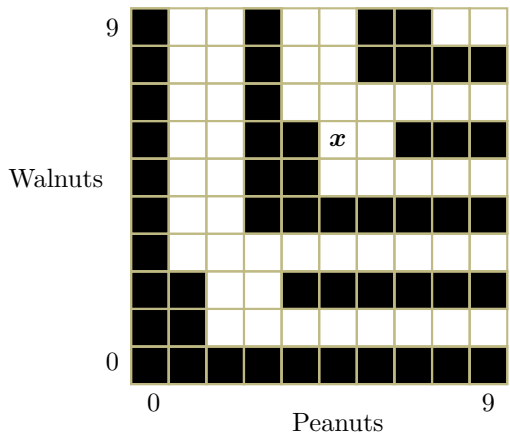

In this paper, we provide a complete characterization of the outcomes of {\sc two-move vector subtraction} (i.e. whenever $|S|=2$ in any dimension). The outcome patterns are much more varied than in the case of one dimension, but still tractable, and the solution is surprisingly simple, given the number of possible relations between the move parameters. It turns out that a case-by-case analysis is not necessary;  instead, we will invoke a certain `\P-to-\P\ principle'  and a `translation rule', where the outcome patterns are inherited from trivial {\sc one-move}. Let us begin by giving here a proof of the aforementioned principle.

\begin{lem}[{\sc two-move} \P-to-\P]
\label{lem:PtoP}
Consider  {\sc two-move vector subtraction}  $S= \{ \s_1, \s_2\}$. Then  $\x \in \P$  if and only if $\x+\s_1 +\s_2\in \P$. 
\end{lem}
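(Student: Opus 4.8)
The plan is to avoid any case analysis or induction on the game tree and instead read everything off the recursive definition of the outcome classes: $\x\in\P$ exactly when every option of $\x$ lies in $\N$, while $\x\in\N$ exactly when some option of $\x$ lies in $\P$. Writing $\boldsymbol t=\s_1+\s_2$, the one structural observation that drives the whole argument is that the shifted position $\x+\boldsymbol t$ always has \emph{both} of its moves available, regardless of where $\x$ sits on the board: subtracting $\s_1$ yields $\x+\s_2$ and subtracting $\s_2$ yields $\x+\s_1$, and both results are $\ge\boldsymbol 0$. Hence the options of $\x+\boldsymbol t$ are precisely $\x+\s_1$ and $\x+\s_2$, so that $\x+\boldsymbol t\in\P$ if and only if $\x+\s_1\in\N$ and $\x+\s_2\in\N$. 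I will call this equivalence $(\star)$ and use it for both directions.

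For the forward implication I claim no induction is needed. Suppose $\x\in\P$. The position $\x+\s_1$ can move, by subtracting $\s_1$, to $\x$; since $\x\in\P$, this witnesses $\x+\s_1\in\N$. Symmetrically $\x+\s_2\in\N$. By $(\star)$ we conclude $\x+\boldsymbol t\in\P$. The only facts being used here are that $\x\ge\boldsymbol 0$ (so that subtracting $\s_i$ from $\x+\s_i$ is a legal move) and the recursive definition of $\N$.

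The reverse implication is then obtained for free from the forward one, via the identity $(\x-\s_i)+\boldsymbol t=\x+\s_j$ for $\{i,j\}=\{1,2\}$. I would argue the contrapositive: assume $\x\in\N$, so $\x$ has some option in $\P$, say $\x-\s_1\ge\boldsymbol 0$ with $\x-\s_1\in\P$ (the case $\x-\s_2\in\P$ being symmetric). Applying the already-established forward implication to the position $\x-\s_1$ gives $(\x-\s_1)+\boldsymbol t\in\P$, that is, $\x+\s_2\in\P$. But $\x+\s_2$ is an option of $\x+\boldsymbol t$ (subtract $\s_1$), so $\x+\boldsymbol t\in\N$. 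This proves $\x\in\N\Rightarrow\x+\boldsymbol t\in\N$, which is the contrapositive of the reverse direction.

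I do not expect a genuine obstacle here; the content is entirely in spotting the right framing. The only point that needs care, and the place where a naive attempt might stall, is the temptation to induct on $\x$ and then chase boundary positions where $\x$ has fewer than two legal moves. The framing above sidesteps this completely: the forward direction is unconditional, and feeding its output (applied at the smaller position $\x-\s_i$) back into the definition of $\N$ delivers the reverse direction without ever inspecting how many moves $\x$ itself has. The lone algebraic fact to verify is the telescoping identity $(\x-\s_i)+\s_1+\s_2=\x+\s_j$, which is immediate.
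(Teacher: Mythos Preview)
Your proof is correct. The forward direction is identical to the paper's: both observe that the options of $\x+\s_1+\s_2$ are exactly $\x+\s_1$ and $\x+\s_2$, and that each of these sees $\x\in\P$ as an option and is therefore in $\N$.

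The reverse direction is where you diverge. The paper argues directly: assuming $\x+\s_1+\s_2\in\P$, it knows $\x+\s_1,\x+\s_2\in\N$, and then splits into cases according to whether the ``other'' options $\x+\s_1-\s_2$ and $\x+\s_2-\s_1$ lie in $\N$ or in $\P$; in the latter case it pushes one step further down to conclude $\x-\s_1,\x-\s_2\in\N$ and hence $\x\in\P$. You instead prove the contrapositive and recycle the forward implication: from $\x\in\N$ pick a $\P$-option $\x-\s_i$, apply the forward direction there to get $\x+\s_j\in\P$, and read off $\x+\s_1+\s_2\in\N$. Your route is shorter and sidesteps the bookkeeping about which of $\x\pm\s_i\mp\s_j$ are legal board positions that the paper's case split implicitly carries; the paper's direct argument, on the other hand, makes the structure of the options of $\x+\s_1$ and $\x+\s_2$ more visible, which is in the spirit of the later \P-to-\P\ lemmas for three-move rulesets where such option-chasing is unavoidable.
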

\begin{proof}
 Suppose that $\x \in \P$. Then $\{\x+\s_1, \x+\s_2\}\subset \N$. The set of options of the position $\x+\s_1 +\s_2$ is the same set. Hence $\x+\s_1 +\s_2\in \P$.
 
 Suppose $\x+\s_1 +\s_2\in \P$. The set of options is $\{\x+\s_1, \x+\s_2\}\subset \N$. Each of these positions has a \P-position as an option. Hence, if $(\x+\s_1)-\s_2\in\N$ or $(\x+\s_2)-\s_1\in\N$, then this forces $\x\in\P$.   Otherwise $\{\x+\s_1-\s_2,\x+\s_2-\s_1\}\subset \P$. This implies that $\{\x-\s_2,\x-\s_1\}\subset \N$. Hence, $\x\in\P$.
\end{proof}

{\sc additive three-move} concerns rulesets where the third move is the sum of the first two moves. We study a few instances in two dimensions when variants of the \P-to-\P~ update rule continues to hold, and we demonstrate how this idea generalizes to a certain {\em coloring automaton} (Section~\ref{sec:outcomesegment}).
 
The outcomes of any instance of {\sc one-dimensional finite subtraction} are ultimately periodic \cite{golomb1966mathematical}. We prove that such results still hold in two dimensions when we restrict the view to rows or columns: {\sc two-dimensional finite subtraction} is eventually row and column periodic. In contrast, the general problem of `periodicity' for {\sc finite vector subtraction} is much harder, and indeed, a recent result \cite{larsson2013impartial, larsson2013heaps} proves Turing completeness of a slight extension of such games. In a follow up \cite{Gurvich}, 
Gurvich et al. show that {\sc finite  vector subtraction} is EXP-complete.

Instances of {\sc infinite vector subtraction} are well-studied. Popular examples are {\sc nim} ($S=\{a\s\mid \sum_{i} s_i=1,a\in \n\}$) and {\sc wythoff nim} \cite{wythoff1907modification} ($S=\{(a,0),(0,a),(a,a)\mid a\in \n\}$) and their many variations; see for example \cite{duchene2019wythoff} for a survey on a multitude of variations of {\sc wythoff nim}. {\sc nim} has become famous because of its generic capacity in encoding any impartial normal play game, via the famous Sprague-Grundy theory. 

In contrast, not many papers study {\sc finite vector subtraction}. A notable exception is \cite{abuku2019combination}, where the family of rulesets  {\sc cyclic nimhoff} \cite{fraenkel1991nimhoff} is generalized to the setting of {\sc finite two-dimensional subtraction}. The authors provide a  closed form for the nim-values whenever the component has a  so-called $h$-stair  structure (Theorem~2.1 and Definition~2.1 of their paper).\footnote{The $h$-stair definition originates in Siegel's Master's Thesis in 2005 \cite{siegel2005finite}.} From many examples, we observe that the rulesets considered in \cite{abuku2019combination} have exactly one {\em outcome segment}. 

Here, we consider more general two-dimensional rulesets for which one segment is very rare. An {\em outcome segment} is a  special type of geometry on the outcomes, with a distinct regular outcome pattern (a distinguishing tessellation/mosaic), and an outcome segmentation is a finite union of all such outcome segments, for a given ruleset. Intuitively, the segments are separated by thick lines. See Figure~\ref{fig:symasym} for some examples of this phenomenon. If a ruleset admits an outcome segmentation then we can sometimes describe the outcomes by an initial coloring and a finite set of {\em update rules}/{\em a coloring automaton} similar to the situation for eventual periodicity for {\sc finite one dimension},  where the update rule of each \P-position would be the period length, discounted for any preperiod. We discuss this development in Section~\ref{sec:outcomesegment}.

\begin{figure}[htbp!]
 \begin{center}
 \includegraphics[width=6cm]{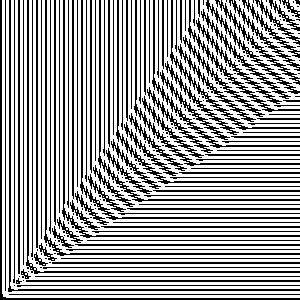}
 \includegraphics[width=6cm]{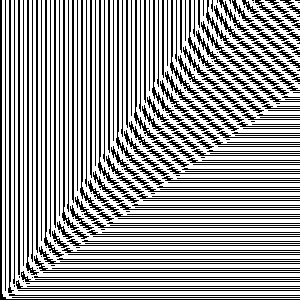}
 \end{center}
 \caption{A symmetric and an asymmetric 4-segmentation respectively: the initial 300 by 300 outcomes of {\sc three move} $S =\{(2,6),(6,2), (3,3)\}$ (left) and $S =\{(2,6),(3,3),(6,1)\}$ (right).}
 \label{fig:symasym}
 \end{figure}

 Guo and Miller \cite{guo2011lattice, guo2013algorithms} consider a vast generalization of {\sc subtraction} called {\em lattice games}. 
 %A lattice game consist of game board $\mathcal{B}$ and a ruleset $S$. 
 %Miller and Guo \cite{guo2009potential,guo2011lattice}
 They conjectured \cite{guo2009potential} that every lattice game has a {\em rational strategy}; roughly its set of \P-positions is finitely generated via a rational generating function. Fink \cite{fink2012lattice} provides a counter example, and proves instead (in two ways) that the proposed set of games is Turing complete. In our context, `rational strategies' would correspond to a variation of the notion of `outcome segments'. But their setting is different and the analogy is not exact. By viewing their games as classical heap games, they require bounded heap sizes, while the number of heaps can be arbitrarily large. In our setting, the number of heaps is bounded, while the individual heaps can be arbitrarily large. See Weimerskirch \cite{mikeW} for a nice discussion on this topic. We will return to this discussion briefly in Section~\ref{sec:picopen} where we pose some related open problems. We have examples in two dimensions, where, for a given ruleset, the set of $\P$-positions does not appear to constitute a finite disjoint union of individually regular patterns. See Figures~\ref{fig:chaos} and \ref{fig:chaos2}.
%%%%%%%%%%%%%%%%%%%%%%%%%%%%%%%%%%%%%%%%%%%%%
%%%%%%%%%%%%%%%%%%%%%%%%
\begin{rem}\label{rem:outcome geometry}
More intuition: An {\em outcome geometry} describes a larger picture of the outcomes, such as lines, or line segments that separate regions with distinguished {\em outcome patterns}, %(mosaics), 
or perhaps with no \P-positions \cite{friedman2019geometric}.  When we describe an outcome geometry there is no requirement to describe any outcome patterns along the lines or within the disjoint regions. The outcome patterns provide the usual information on how to win a game, but this is a secondary issue in terms of outcome geometry. An outcome segmentation is an example of an outcome geometry that we introduce here. %A solution of the outcome geometry indicates a current   outcome pattern. 
See also Subsection~\ref{sec:geom} for some background literature. 
\end{rem}

 % \begin{rem}\label{rem:NtoN}
 %     By Lemma~\ref{lem:PtoPd},   if $S= \{ \s_1, \s_2\}$, then $\x \in \N$  if and only if $\x+\s_1 +\s_2\in \N$. 
 % \end{rem}

\begin{rem}\label{rem: notation}
Regarding ruleset terminology, we sometimes omit words such as ``finite'' and ``vector'', and write, for example, {\sc subtraction} instead of {\sc finite vector subtraction}.  Similarly, we may omit ``subtraction'' in {\sc three-move subtraction} and simply write {\sc three-move}, and so on. The surrounding context would decide if we study one-dimensional games or some class of {\sc vector subtraction}. Instead of the family of rulesets {\sc two-dimensional vector subtraction} we may simply write {\sc two-dimension}. 
\end{rem}

%\subsection{Organization}
%\label{subsec:Organization}
The rest of the paper is organized as follows. Section~\ref{sec:literature} provides an overview of work in one dimension, except Subsection~\ref{sec:geom}, which discusses outcome geometry. 

Section~\ref{Sec:rowperiodicity} proves row/column eventual periodicity for generic {\sc two-dimensional finite subtraction}.

Section~\ref{sec:outcometable} introduces beginners to our subject via some {\sc one-dimension} examples; here, we also indicate some connections to {\sc two-dimension}. 

All other sections, except Section~\ref{sec:anydimension}, concern play in two dimensions. 

In Section~\ref{sec:2dresemble1d}, we study two-dimensional rulesets that inherit the outcome structures from one-dimensional ditto. In particular, we find some instances where adjoining a move preserves those inherited \P-position structures.

We solve {\sc two-move vector subtraction}:  Section~\ref{sec:twomove} focuses on two dimensions, and takes a geometric approach,  and Section~\ref{sec:anydimension} provides a \P/\N\ ~decision problem approach,  valid in any dimension. 

Section~\ref{sec:three_move_game}  generalizes ideas developed for {\sc two-move} and investigates their validity for {\sc three-move}. We find generalized \P-to-\P~ update rules and develop a theory about additive rulesets. 

This leads us to  Section~\ref{sec:outcomesegment}, where we introduce the concept of outcome segmentation via a defined coloring-scheme/automaton.
 By using these techniques, we solve some instances of {\sc three-move}. 

Finally, in Section~\ref{sec:picopen}, we display %some 
experimental results and pose open problems and conjectures regarding outcome segmentation, and the complexity of generic {\sc two-dimensional finite subtraction}.  
\section{Literature review}\label{sec:literature}
Together with {\sc nim} and {\sc wythoff nim}, {\sc finite subtraction} is probably one of the most popular combinatorial game rulesets, starting with the children game ``21'', where players are allowed to remove one or two beans from a heap of 21 beans. This normal-play game is a second player win, and is a brilliant educational tool for those who never encountered such games before. The beginner usually gets the offer to decide who plays first, and in practice what we see, often they mistakingly choose to start. Let us review some of the {\sc subtraction} literature.

\subsection{{\sc subtraction} mostly in one dimension}\label{sec:1dsubtr}

%%%%%%%%%%% Golomb paper%%%%%%%%%%%%%%%%

In the 1960s, Golomb  \cite{golomb1966mathematical} considers a variant of {\sc vector subtraction}, where a player wins by moving to the $\boldsymbol{0}$-vector, and establishes a draw-free condition in this setting.\footnote{We feel that it is more natural to adapt the convention ``who cannot move loses''. In this way, there cannot be draw games, and the theory develops more easily, as in \cite{larsson2012operator,larsson2013heaps}.} 
He explores the one-dimensional ruleset {\sc take-a-square}, i.e. $S=\{1,4,9,\ldots \}$, but does not provide any theoretical result on this ruleset: we quote, ``A detailed study of the properties of the losing set may well be as difficult as the study of the distribution of the prime numbers". He proves that if a ruleset $S$ contains arbitrarily large gaps then its set of \P-positions is infinite. Lastly, he defines a nonlinear shift register model which determines the winning and losing positions. This construction proves the ultimate periodicity of any finite one-dimensional subtraction game. 
Eppstein  \cite{eppstein2018faster}  continues the study of {\sc take-a-square} and provides an experimental justification that the set of \P-positions  has a ``density comparable to that of the densest known square-difference-free sets". He also studies complexity problems for {\sc subtraction}, and provides a fast algorithm for computing nim-values, provided a slow growth (as in {\sc take-a-square}). Both Golomb and Eppstein contemplate on why {\sc take-a-square} is so much more complex than the subtraction game derived from the Moser–de Bruijn sequence (all numbers with 0,1 in base 4 expansion). These rulesets have the same asymptotic density, but the \P-positions of  {\sc take-a-square} are more dense: $\sim n^{0.7}$ compared with $\sqrt{n}$ \P-positions for {\sc moser–de bruijn}, up to position $n$.

As mentioned, {\sc finite subtraction} on a single heap has ultimately periodic outcomes/nim-values. The method of proof gives an exponential bound on preperiod and period length in the largest element of the subtraction set. However, to our best knowledge,  exponential period lengths have not yet been rigorously established (see Flammencamp's thesis \cite{Flammenkamp_1997} for experimental results). Let us review some more recent developments in the literature on the quest of preperiod/period length of {\sc finite subtraction}. 
Even if we restrict our attention to {\sc three-move}, preperiod and period length are not yet fully understood.

In the early 1980s, Berlekamp et al. \cite[vol. 1, chap. 4, p. 83]{berlekamp2004winning} explore {\sc finite subtraction} and compute the nim-sequences for all rulesets with maximum subtraction $7$.\footnote{The nim-value of an impartial game can be computed recursively via a minimal exclusive algorithm \cite{siegel2013combinatorial} on the nim-values of its options. Any \P-position, such as the terminal positions, has nim-value $0$. Nim-sequences are a refinement of outcome sequences because they reveal the winning strategies in disjunctive sum play of impartial normal-play games. We will not require any of this material in this work, since we do not study game addition.} They note that the (ultimate) period length is the sum of two elements from the subtraction set, for all but one ruleset, namely $S=\{2,5,7\}$ (which has period length  $22$). In  \cite[vol. 3, chap. 5, p. 529]{berlekamp2004winning}, they examine {\sc additive subtraction} of the form $S = \{a, b, a+b\}$, and provide a complete statement for the nim-sequence whenever $b=2ha-r$, for some $0<r<a$, $h\in\n$. Specifically, the period length is $2b+r$. They highlight Ferguson's pairing property to derive the nim-value 1 from the 0s in the harder case when instead $b=2ha+r$ for some $0<r<a$, $h\in\n$. In this case, the period length is $a(2b+r)$. (They omit proofs of these results.) Ferguson's pairing property states that $\mathcal{G}(n)=1$ if and only if $\mathcal{G}(n-a)=0$, where $a$ represents the smallest number in the subtraction set.  
Austin \cite{austin1976impartial} analyses subtraction games in his Master's thesis. He demonstrates that a subtraction game has no preperiod if there exists a $p$ such that $p-s\in S$ whenever $s\in S$. In several instances, his results are similar to those in Winning Ways; he studies additive rulesets and also rulesets of the form $S=\{a,b, 2b-a\}$. 

Alth{\"o}fer  and B{\"u}ltermann  \cite{althofer1995superlinear} prove some results on the nim-values of {\sc three-move}  $S= \{a,2a+1, 3a+1\}$, $a \in \n$. This ruleset has a quadratic polynomial period length in the parameter $a$ and no preperiod.  They consider {\sc four-move} of the form $S=\{a,4a,12a+1,16a+1\}$ with $1\le a \le 26$, and prove pure periodicity; the period length is a cubic polynomial in the parameter $a$. They wonder if the {\sc five-move} ruleset   
$S=\{a,8a,30a+1,37a+1,38a+1\}$, $a \in \n$, has super-polynomial period length. 

Flammenkamp \cite{Flammenkamp_1997} expands on \cite{althofer1995superlinear}. He searches for long periods in {\sc finite subtraction} via extensive computations.  He shows that {\sc three-move} and other rulsets, given $\max S=s_3$ follow various fractal-looking behaviors on the classification of preperiods and periods, whether they are of the form $s_1+s_2$, $s_1+s_3$, $s_2+s_3$, or something else. He shows that very few have other period lengths, and that about half of the rulesets have period length $s_1+s_3$. Flammenkamp is not convinced by the mentioned {\sc five-move} problem from  \cite{althofer1995superlinear}. He suggests instead various {\sc four-move} and {\sc five-move} rulesets, where he, through extensive computations, provides heuristics that support ``exponential'' rather than ``polynomial'' eventual period lengths, in some cases via a ruleset parameter or in other cases via a ``record holder'' ruleset in terms of $\max S$. When it comes to {\sc four-move} he uses the second method to point at a tendency towards exponential vs. monomial behavior. He does this by testing the period lengths of record holders in the range $80\le \max S\le 235$, whether they can be lower bounded by an exponential expression of the form $2^{\alpha\max{S}}$ for some $ \alpha > 0$, or if they can be upper bounded by a monomial of the form $\max S^\beta$, for some $\beta>1$. As far as his computation goes, it seems that perhaps both are wrong.  Namely $\alpha\sim 0.2$, but with a decreasing tendency, whereas an upper bound $\beta$ seems even more unlikely, because, by indexing with $\max S$,  $\beta_{80}\approx 2.8$ while $\beta_{235}$ has increased to $\approx 5.2$. Motivated by the former, Flammenkamp makes more experiments. He completes a table with record holders for any finite ruleset with $\max S\le 30$, and finds a distinguishing property of members of record holder sets. They contain many elements of the form: both $\max S-s$ and $s$ belong to $S$.  Let us define this important property: $S$ is {\em max-symmetric} if, for all $1\le s\le \max S$, if $\max S-s \in S$, then $s\in S$. By restricting the experiments to max-symmetric sets, he finds that most record holders have size 5. By computing record holders among such sets, for all $61 \le \max S\le 117$, he finds that the period lengths tend to be $2^{\alpha\max S}$, for $\alpha\approx 0.3$. This certainly points towards the existence of subtraction sets with exponential period lengths.

Cairns and Ho \cite{cairns2010ultimately} introduce {\sc ultimately bipartite subtraction}. A ruleset is called bipartite if the ultimate period length is two. They first prove a necessary and sufficient condition for a ruleset to be bipartite. Secondly, they prove that if a ruleset is ultimately bipartite, then, for sufficiently large heap sizes, it is an \N-position if and only if the heap size is odd.  Ho  \cite{ho2015expansion} studies the periodicity of {\sc three-move subtraction} and discusses how to adjoin moves without changing the nim-value sequences.  
He poses a conjecture that ``The subtraction set of an ultimately bipartite game is non-expandable".

In  \cite{ward2016conjecture}, Ward considers {\sc three-move subtraction} $S=\{a,b,c\}$, where $a<b<c$, and conjecture a precise characterization of the nim-value periods. The conjecture is stated in two different regimes. When $S$ is additive, i.e. $c= a+b$, the period is at most a quadratic polynomial in  $a$ and $b$. This part is stated as a theorem in \cite{Flammenkamp_1997}, but concerning only outcomes, both \cite{austin1976impartial} and \cite{berlekamp2004winning} have at least partial solutions to this part. Both \cite{Flammenkamp_1997} and \cite{ward2016conjecture} conjecture that when $c \neq a+b$, the period length has seven possibilities; in essence the period length is a divisor of the sum of two elements of the subtraction set. 
Coleman et al.  \cite{coleman2020department} consider {\sc two-move subtraction}. They describe necessary and sufficient conditions for obtaining a given nim-value at a position. %Shenxing  
Zhang  \cite{zhang2021linearity} studies how the period of the nim-sequence changes when one adjoins a move to a given ruleset. The subtraction game literature and periodicity-related questions expand to partizan subtraction games (where the players have different subtraction sets) \cite{fraenkel1987partizan}. Duchene et al. \cite{duchene2022partizan} show that computing the preperiod length of the outcome sequence is NP-hard.  They compute the period length when each player's subtraction set is of size at most two.

As mentioned, finite subtraction games have eventually periodic nim-sequences. What can one say about infinite subtraction games? Of course, classical {\sc nim} is a simple example with an aperiodic and unbounded nim-sequence. In \cite{siegel2005finite} Siegel studies subtraction games where every removal but a finite set is allowed and proves the arithmetic periodicity of nim-values.\footnote{A sequence $(a_n)$ is arithmetic-periodic if there is a period $p>0$ and a saltus $s > 0$ such that for all $n > 0$ , $a_{n + p} = a_n + s$.}  Researchers have been interested in the question of whether there is any subtraction game with an aperiodic but bounded nim-sequence. Fox \cite{fox2014aperiodic} found a positive answer,  using a technique from combinatorics on words,  via a ruleset with an aperiodic nim-sequence bounded by 3, but the  description of the subtraction set and the proofs are complicated. Inspired by this, Fox and Larsson \cite{FoxLar} provides a simpler game and analysis with the same properties. They consider the subtraction set $S= \{ F_{2n+1} -1 \}_{n\in\n} =\{1,4,12, \cdots \}$ where $F_k$
is the $k$th Fibonacci number with $F_0=0$, $F_1 =1$.

\subsection{Other settings} 
Cohensius et al. \cite{cohensius2019cumulative}  solve instances of scoring-play  subtraction games, referred to as {\sc cumulative subtraction}. Two players alternatively remove stones from a pile that adds or removes points to a common score (depending on who played), and where any removal is restricted to a standard finite subtraction set $S$. They prove that the zero-sum outcomes in optimal play of such games are eventually periodic and the period is $2\max S$, and they find an exact characterization of the perfect play outcomes whenever $|S|=2$. So, the general problem of periodicity is a lot simpler for scoring-play than normal-play, while the instance with exactly two moves is somewhat more varied in scoring-play.

Larsson  \cite{larsson2012operator} considers   invariant subtraction games (i.e. {\sc vector subtraction}). %The game is defined by the set of  invariant moves $d$-tuples of  non-negative integers and a starting position. 
He defines the $\star$-operator of invariant subtraction games, and proves that any sequence of invariant subtraction games, generated by the  $\star\star$-operator, converges point-wise. The $\star$-operator defines the new subtraction game from the set of non-zero \P-positions of the old subtraction game. Thus, the $\star\star$-operator defines a game, by iterating $\star$ twice. He also provides a necessary and sufficient condition for duality, i.e. for $S= (S^{\star})^{\star}$ to hold. Note that the $\star$-operator can be applied to both finite and infinite subtraction sets. 

Carvalho et al. \cite{carvalho2020combinatorics} consider {\sc jenga}, a  popular recreational game. They compute nim-values of {\sc jenga} by viewing them as two-dimensional addition games, similar to the ones studied in \cite{larsson2013heaps}. For a study of the game values (a refinement of outcomes) of a finite two-dimensional partizan subtraction game, see  \cite{carvalho2012recursive}. We know of only one one-dimensional infinite partizan subtraction ruleset in the literature, namely  {\sc wythoff partizan subtraction} \cite{LarssonUrbanNeil}. Here, the players have complementary subtraction sets (via the complementary sequences of {\sc wythoff nim}'s set of \P-positions $S_L=\{1,3,4,6,\ldots\}$ and $S_R=\{2,5,7,10,\ldots \}$). The authors study the canonical form and reduced canonical form of a game  \cite{siegel2013combinatorial} and relate these concepts to various Fibonacci-type properties including methods from combinatorics on words; the authors achieve a complete solution of that game in terms of canonical form numbers and reduced canonical form switches. 

\subsection{Outcome geometry}\label{sec:geom}
Recent research on {\sc two-dimensional subtraction} has exposed a fair amount of geometry on the outcomes. This is certainly true for a {\sc generalized diagonal wythoff nim} \cite{Larsson2012GDWN,Larsson2014split} and in more generality (not necessarily symmetric rulesets) for the family {\sc linear nimhoff} \cite{friedman2019geometric}. The idea of those papers is to adjoin not single moves, but entire `lines' of moves to the original move-lines of {\sc nim} or {\sc wythoff nim}. The surprise is that the \P-positions appear to reorganize along various combinations of diffuse lines, new and/or old, with with distinct slopes and asymptotic densities as projected on the x-axis (e.g. {\sc nim} has one \P-line of slope 1 and density 1; {\sc wythoff nim} has two \P-lines of slopes $\phi=\frac{\sqrt{5}+1}{2}$ and $\phi^{-1}$ with densities $\phi^{-1}$ and $\phi^{-2}$ respectively). The regions (segments) between those \P-lines are visually, i.e. experimentally, void of \P-positions.  By making such assumptions, and by using a method from physics, called {\em renormalization}, a system of equations is formulated on the respective slopes and densities of the conjectured lines. The renormalization approach gives very good estimates of the conjectured geometry of games, as discussed in \cite{friedman2019geometric}. They also discuss when new move-lines do not affect the slopes and densities of existing \P-lines. That work has been an inspiration to this work, although the method used is different. 

Larsson et al. \cite{larsson2014maharaja}  introduce {\sc  maharaja nim}, which combines the moves of the Queen and Knight of Chess.  
They prove that for  {\sc  maharaja nim} the \P-positions remain close to those of {\sc wythoff nim}; they reorganize  within a bounded distance of the lines with slopes  $\phi$ or  $\phi^{-1}$. More generally they wonder if this global outcome geometry remains the same when any finite number of subtractions is adjoined to (and/or removed from) those of {\sc wythoff nim}. In \cite{duchene2009another} they prove that if one removes a single move from those of {\sc Wythoff Nim} then the \P-positions cannot remain exactly the same. But they did not further the research question into the direction of local vs. global behavior. They did not  phrase questions about {\em robustness/fragility} of the outcome geometry of a ruleset; here we borrow terminology from another recent work, closely related to combinatorial game theory~\cite{yakovurban}.

Larsson \cite{larsson2011blocking} introduces {\sc blocking wythoff nim}; for a given parameter $k$, at each stage of play, at most $k-1$ options may be blocked by the opponent; after the move any blocking is forgotten. He finds the winning strategies when $k=2,3$. Cook et al. \cite{Cook2017} continue by exploring the geometry of this game, and they find a cellular automaton that emulates a certain blocking-refinement of the outcomes. They find a surprising amount of self-organization in this system; the global geometry of the refined outcomes remains the same for all large blocking parameters. The system ``self-organizes itself into 11 visually distinguishable regions with 14 borders (half-lines or line-segments) and 6 junctions''.

%%%%%%%%%%%%%%%%% New Literature ends  here%%%%%%%%%%
\section{Eventual row periodicity}
\label{Sec:rowperiodicity}
The most basic and most general result for {\sc one-dimensional finite subtraction} concerns eventual periodicity of the outcomes. In two dimensions, we are able to prove a similar result, but only row- and  column-wise. 

\begin{defi}[Row Periodicity]
Consider a ruleset $S$ in  {\sc two-dimensional subtraction}. 
\begin{enumerate}[a)]
\item Consider row $j\in \nz$.  The outcomes are row $j$ periodic  if  $\exists$ $T > 0$ such that  $\forall i \ge 0$, $ o(i,j)= o(i+T,j)$. 
 \item  The outcomes are row periodic if  $\forall j \ge 0$  the outcomes are row $j$ periodic.
\end{enumerate}
\end{defi}
In practice, row periodicity is very rare.

\begin{defi}[Row Eventual Periodicity]
\label{def_eventualperiodic}
Consider a ruleset $S$ in {\sc two-dimensional subtraction}.
\begin{enumerate}[a)]
    \item  Consider row $j\in \nz$. The outcomes are row $j$ eventually periodic if there exists a column  $i^{'}\in \nz $  and a $T >0$ such that $\forall i \ge i'$
    $
    o(i,j) = o(i+T, j).
    $  
    \item The smallest such $i'\in \nz $ is the preperiod length of row $j$.
    \item The outcomes are row eventually  periodic if  $\forall j \ge 0$ the outcomes are row $j$  eventually periodic.
\end{enumerate}
\end{defi}

\begin{thm}[Row-Column Eventual Periodicity]
\label{thm:rowperiod2d}
Consider {\sc two-dimensional finite subtraction}. The outcomes are row (column) eventually periodic.
\end{thm}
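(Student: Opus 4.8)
The plan is to prove the row statement by strong induction on the row index $j$, and to deduce the column statement by symmetry. Write $m=\max\{s_1 : (s_1,s_2)\in S\}$ and $M=\max\{s_2 : (s_1,s_2)\in S\}$, and let $H=\{s_1 : (s_1,0)\in S\}$ be the set of purely horizontal moves. The starting observation is that the recursion for $o(i,j)$ splits into a within-row part and a below-row part: a position $(i,j)$ lies in \P\ if and only if all of its horizontal options $(i-h,j)$, $h\in H$, lie in \N\ \emph{and} none of its strictly lower options $(i-s_1,j-s_2)$ with $s_2>0$ lies in \P. Encoding the second condition by a forcing bit $b_j(i)\in\{0,1\}$ (equal to $1$ exactly when some strictly lower option is a \P-position), row $j$ is governed by the one-dimensional subtraction recursion with subtraction set $H$ driven by the external input $b_j$, while $b_j$ itself depends only on rows $j-1,\dots,j-M$.

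The key step I would isolate is the following one-dimensional lemma: if $b:\nz\to\{0,1\}$ is eventually periodic, then the sequence $o(\cdot)$ defined by $o(i)\in\P \iff b(i)=0$ and $o(i-h)\in\N$ for every $h\in H$ with $h\le i$ is eventually periodic. I would prove this by a finite-state argument. Once $b$ has entered its periodic regime, say $b(i)=\beta(i \bmod T)$ for $i\ge P_0$, consider for $i\ge \max(P_0,m)$ the augmented state $\tau(i)=\big((o(i-1),\dots,o(i-m)),\, i \bmod T\big)$, lying in the finite set $\{\P,\N\}^m\times \mathbb{Z}_T$. Since $o(i)$ is determined by the window $(o(i-1),\dots,o(i-m))$ together with $b(i)=\beta(i\bmod T)$, the state evolves deterministically, $\tau(i+1)=G(\tau(i))$, so by the pigeonhole principle the orbit $(\tau(i))$ is eventually periodic; as $o(i)$ is a function of $\tau(i)$, so is $o$. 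For the inductive step proper, assume rows $0,\dots,j-1$ are eventually periodic. Taking $T$ to be the least common multiple of their periods and $P$ a common preperiod bound, each lower row is periodic with period $T$ beyond column $P$, hence $b_j(i)=b_j(i+T)$ for all $i\ge P+m$; that is, $b_j$ is eventually periodic, and the lemma yields row $j$. The base case $j=0$ (and more generally any row with no valid lower option) is the instance $b\equiv 0$, which is exactly the classical ultimate periodicity of one-dimensional {\sc finite subtraction}.

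The induction is well founded because $b_j$ looks back only $M$ rows, and the outcomes are well defined since every move strictly decreases $x_1+x_2$. The main obstacle, and the only place the two-dimensional structure genuinely enters, is the coupling between rows: I must ensure that the forcing inherited from below is not merely ``eventually periodic row by row'' but eventually periodic with a \emph{single} period valid simultaneously for all of rows $j-1,\dots,j-M$, which is precisely what the least-common-multiple step secures. I do not expect (nor need) a uniform bound: the period $T_j$ and preperiod of row $j$ may grow with $j$ — indeed the lcm step builds this growth in — which is consistent with full two-dimensional periodicity being much harder. Finally, the column statement follows by applying the row result to the transposed ruleset $S^{\mathsf T}=\{(s_2,s_1):(s_1,s_2)\in S\}$, whose outcomes satisfy $o^{\mathsf T}(i,j)=o(j,i)$ because $(x_1,x_2)\mapsto(x_2,x_1)$ is a game isomorphism; eventual periodicity of the rows of $S^{\mathsf T}$ is exactly eventual periodicity of the columns of $S$.
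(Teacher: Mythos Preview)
Your proof is correct and follows essentially the same approach as the paper's: strong induction on the row index, combining a common period of the finitely many relevant lower rows with a pigeonhole argument on finite windows in the current row. Your explicit isolation of the lower-row influence as a forcing bit $b_j$ and the accompanying one-dimensional finite-state lemma is a cleaner packaging of what the paper does inline with its superperiod $\pi_{y'}$ and window $W(x,y)$, but the underlying idea is the same.
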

\begin{proof}
Let $S=\{(a_i,b_i)\}_{1\le i \le k}$ be the subtraction set, and let $A=\max_{i} \{a_i\}$.

Fix a row $y$, and define the window $W(x,y) = (o(x,y), o(x+1,y), \cdots , o(x+A-1,y))$. The outcome function $o(x,y) \in \{\N, \P\}$. Hence there are at most $2^A$ possible outcome patterns within the window range. 

We prove the theorem using induction.  

Base case: the outcomes of row $0$  are eventually row periodic. We first prove this statement. 

Since $2^A$ is a finite number, there exists a smallest $x$ and an $\alpha$ such that $W(x+\alpha,0) = W(x,0)$. 
That is, the outcome patterns are repeated in the window. By computing the next outcome, because of the choice of window size, we find that $W(x+i+\alpha,0) = W(x+i,0)$, for all $i\ge 0$. Hence, the repetition occurs before $A 2^A$, and the period length $p_0$ is at most $A2^A$.

Hypothesis: Fix a row $y'$ and assume that the outcomes are eventually row periodic, for all rows $0\le y < y'$.
Let the period lengths for the rows $0, 1, \cdots, y'-1$ be  $p_0, p_1, \cdots, p_{y'-1}$, respectively.  We aim to prove that the $y'$-th row is eventually row periodic. 
%\begin{defi}
%\label{def:superperiod}

Let $B=\max_{i} \{b_i\}$ be the maximum of the $y$-coordinates of the subtraction set. Define the superperiod length 
\begin{equation}
\label{eq:superperiod}
\pi_{y'} = \prod_{i=y'-B}^{y'-1}p_i.
\end{equation}
%\end{defi}
For any sufficiently large $x$, consider the windows $W(x+m\pi_{y'},y')$, where $m \in \nz$. Since the number of possible outcome patterns within these windows is finite, there will be a repetition, for say $m'$ and $m''$ respectively. When this repetition occurs, we use that $x$ is sufficiently large, meaning that the lower rows have become periodic. It follows that row $y'$ is eventually periodic, because $o(x+m'\pi_{y'}+1,y')=o(x+m''\pi_{y'}+1,y')$ (the argument is the same as in the base case). 
\end{proof}

Theorem~\ref{thm:rowperiod2d} is proved for impartial rulesets. But, similar to {\sc one-dimension}, the proof technique is general, and it continues to hold for nim-values, partizan rulesets, and more. 

In going from full generality to particular ruleset analysis, it is worthwhile reviewing some elementary  examples of {\sc one-dimensional  finite subtraction}; sometimes this bears fruit in {\sc two-dimensional  finite subtraction}. This is the content of the next two sections.  
\section{Does it matter if  $b> 2a$?} %\texorpdfstring{$s_2>2s_1?$}?}
\label{sec:outcometable}
This section concerns elementary examples of {\sc one-dimension}, and could be skipped by readers well aquainted with the subject. Here we compute the initial outcomes for some examples of {\sc two-move} $S=\{s_1,s_2\}=\{a,b\}$ and {\sc additive three-move} $S=\{a,b,a+b\}$, $a< b$. Whenever possible, we relate the outcome patterns to Euclidean division, with $b=qa+r, 0\le r<a$. Moreover, we will point at some connections to {\sc two-dimension} studied further down this paper. 
  
We study the regime $b \le 2a$ via the rulesets $S_2= \{5,8\}$ and $S_4=\{2,3,5\}$  and the regime $b> 2a$ via the rulesets  $S_3=\{3, 8\}$ and $S_5=\{2,5,7\}$. Moreover we let $S_1=\{3\}$, and we denote the outcome functions corresponding to rulesets $S_i$ as $o_i$ where $i\in \{1,2,3,4,5\}$.

When $x< a=3$ there are only $\P$-positions for the ruleset $S_1$, since there is no move to play. For $a \le x  < 6=2a$ there are only $\N$-positions, since in this case $o_1(x-3)=\P$. The rest of the positions repeat this pattern since those three $\N$-positions act as if there was no move at all. Let us list the first few outcomes:
\vspace{2 mm}  
\begin{center}
{\small
\begin{tabular}{|c|c|c|c|c|c|c|c|c|c|c|c|c|c|c|c|c|c|c|}
\hline
 $x$ & 0 & 1 & 2 & 3 & 4 & 5 & 6 & 7 & 8 & 9 & 10 & 11 & 12 &13 &14 & 15 & 16& 17\\ \hline
$o_1$ & \P  & \P & \P & \N & \N & \N & \P & \P & \P & \N & \N  & \N & \P & \P &\P& \N &\N &\N \\ \hline
\end{tabular}}
\end{center}\vspace{2 mm}
%Consider the ruleset  $S_2$.
In general, the outcomes of {\sc one-move} correspond to {\sc two-move} of the form $S=\{a,qa\}$ for an odd $q$ (here $r=0$).

Further, in the case of $S_2$, $o_2(x) = \P$ if $x < a=5$ and $o_2(x) =\N $ if $5=a \le x < a+b=13$, and, due to Lemma~\ref{lem:PtoP},  the rest of the positions repeat this pattern.\vspace{2 mm} 
{\small
\begin{center}
\begin{tabular}{|c|c|c|c|c|c|c|c|c|c|c|c|c|c|c|c|c|c|c|}
\hline
 $x$ & 0 & 1 & 2 & 3 & 4 & 5 & 6 & 7 & 8 & 9 & 10 & 11 & 12 &13 &14 & 15 & 16 & 17\\ \hline
$o_2$ & \P  & \P & \P & \P & \P & \N & \N & \N & \N & \N & \N  & \N & \N & \P &\P& \P & \P &\P\\ \hline
\end{tabular}
\end{center}}\vspace{2 mm}

In this case, $8 = b\le 2a=10$, and the outcomes of $S_2$ resemble those of $S_1$; first, a bunch of \P-positions, then a bunch of \N-positions, and so on, where the sizes, $a$ and $b$ respectively of those respective bunches remain forever the same. Although this is a visually obvious observation, in this paper we take an approach that generalizes better: the outcome patterns of {\sc one-move} reigns until {\sc two-move} enters the battlefield.  

The second regime is here exampled by the ruleset $S_3$,  $o_3(x) =\P$ if $x < a=3$ , $o_3(x) =\N $ if $3 \le x <6$,  $o_3(x) =\P $ if $6 \le x <8$ and  $o_3(x) =\N $ if $8 \le x <11$. The rest of the positions repeat this pattern due to  Lemma~\ref{lem:PtoP}.\vspace{2 mm} 
{\small
\begin{center}
\begin{tabular}{|c|c|c|c|c|c|c|c|c|c|c|c|c|c|c|c|c|c|c|c|}
\hline
 $x$ & 0 & 1 & 2 & 3 & 4 & 5 & 6 & 7 & 8 & 9 &10 &11 &12 &13 &14 & 15 
 & 16 &17 \\ \hline
$o_3$ & \P  & \P & \P & \N & \N & \N & \P & \P & \N & \N & \N & \P  & \P & \P & \N & \N & \N  & \P \\ \hline
\end{tabular}
\end{center}}\vspace{2 mm} 
In this case, $6 = 2a< b = 8$, and the outcome pattern is somewhat more interesting and varied depending on the result of the Euclidean division of $b$ by $a$. In particular, the fact that $2a=6\in\P$ has various implications for {\sc two-dimension}. 

In general, for {\sc two-move}, if $x<b$ then $x\in \P$ if and only if $0\le x<a \pmod{2a}$, if $b\le x<b+a$ then $x\in \N$, and then apply Lemma~\ref{lem:PtoP} (see also Theorem~\ref{thm:twomoveonedim}). 

We study {\sc additive three-move} in one dimension and find a particular example when Lemma~\ref{lem:PtoP} continues to hold. By this, we mean: the period length is the sum of two moves from the set $S$. 
%The ruleset  is an example of  {\sc additive three-move}. 
Let us compute a few initial outcomes of $S_4$. 
  \vspace{2 mm}
  
{\small
\begin{center}
\begin{tabular}{|c|c|c|c|c|c|c|c|c|c|c|c|c|c|c|c|c|c|c|}
\hline
$x$ & 0 & 1 & 2 & 3 & 4 & 5 & 6 & 7 & 8 & 9 & 10 & 11 & 12 &13 &14 & 15 & 16 & 17\\ \hline
$o_4$ & \P  & \P & \N & \N & \N & \N & \N & \P & \P & \N & \N  & \N & \N & \N &\P & \P & \N &\N \\ \hline
\end{tabular}
\end{center}}\vspace{2 mm}
 In this case, $4=2a \ge b=3$ and the outcome of $S_4$ resembles those of $S_2$; first, a bunch of \P-positions, then a bunch of \N-positions,  and so on, where the
sizes, $a$ and $a+b$ respectively of those respective bunches remain forever the same. 
 The outcome function  $o_4(x)= \P$ if  $0 \le x  < a=2$ and $o_4(x)= \N$ if  $2 \le x  <7=2a+b$ and the rest of the positions repeat this pattern.

\begin{thm}%[{\sc additive three-move one-dimension}]
\label{thm:threemoveonedim}
Suppose $b/2 \le a<b$. Then the outcomes of the ruleset $S=\{a,b, a+b\}$ are purely periodic, with period length $2a+b$; $o(x)=\P$ if $0\le x<a$, and $o(x)=\N$ if $a\le x < 2a+b$. 
\end{thm}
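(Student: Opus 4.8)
The plan is to identify the conjectured set of \P-positions explicitly and verify it by the standard two-condition characterization, rather than to invoke Lemma~\ref{lem:PtoP} directly (which is a \textsc{two-move} statement). Write $T = 2a+b$ and set
\[
\mathcal Q = \{x \in \nz : x \bmod T < a\}.
\]
I would first recall that to prove $\mathcal Q = \P$ it suffices to verify two conditions: (Closure) every legal move from a position in $\mathcal Q$ lands outside $\mathcal Q$, and (Reachability) every position outside $\mathcal Q$ admits a legal move into $\mathcal Q$. These determine the \P-set by induction on $x$, and they are consistent here: since the three moves $a,b,a+b$ are all $\ge a$, the terminal positions are exactly those with $x<a$, and these lie in $\mathcal Q$. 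The crucial simplification is that membership in $\mathcal Q$ depends only on the residue $r := x \bmod T$, while each move shifts the residue by $-a$, $-b$, or $-(a+b)$ modulo $T$. I would therefore carry out all the bookkeeping on residues, checking separately that every move I invoke is legal (i.e. that the amount subtracted is at most $x$).

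For Closure, take $r \in [0,a)$ and compute the residues of the three options. Because $r < a < b$, each subtraction wraps around $T$, giving residues $r+a+b \in [a+b,T)$, $r+2a \in [2a,T)$, and $r+a \in [a,2a)$ respectively; all three lie in $[a,T)$, hence outside $\mathcal Q$. (Here $a<b$ guarantees $r-b<0$, and $a\le b$ keeps $r+2a<T$.) Any legal move thus leaves $\mathcal Q$.

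For Reachability, take $r \in [a,T)$ and split this interval into the three pieces $[a,2a)$, $[2a,a+b)$, and $[a+b,T)$, which tile $[a,T)$ precisely because $a < b \le 2a$. On $[a,2a)$ subtract $a$ (new residue $r-a \in [0,a)$); on $[2a,a+b)$ subtract $b$ (new residue $r-b \in [2a-b,a) \subseteq [0,a)$, where $b\le 2a$ gives $2a-b\ge 0$); on $[a+b,T)$ subtract $a+b$ (new residue $r-a-b \in [0,a)$). In each case the smallest position with residue $r$ already exceeds the amount subtracted, so the move is legal, and it lands in $\mathcal Q$.

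Concluding, both conditions hold, so $\mathcal Q = \P$; since $\mathcal Q$ is defined purely by residue mod $T$ and agrees with the pattern from $x=0$ onward, the outcomes are purely periodic with no preperiod and $o(x)=\P$ exactly on $[0,a) \pmod{2a+b}$. A one-line check that the block of $a$ \P's followed by $a+b$ \N's is not a proper power shows $2a+b$ is the \emph{minimal} period. The only genuine work—and the only place the hypothesis $b/2 \le a < b$ is used—is the Reachability step: one must check that the three subintervals exactly partition $[a,T)$ and that the subtract-$b$ case stays nonnegative, both of which fail once $b>2a$. I expect this interval tiling, together with keeping the modular wraparound straight (subtracting a positive amount can \emph{raise} the residue mod $T$), to be the main thing to get right; everything else is routine.
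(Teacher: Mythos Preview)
Your proof is correct. Both your argument and the paper's identify the same three subintervals of $[a,2a+b)$ and use the hypothesis $b\le 2a$ at exactly the same point (to ensure that subtracting $b$ from the middle block lands in $[0,a)$). The difference is packaging: the paper computes outcomes forward on one period and then invokes the standard one-dimensional lemma ``$\max S$ consecutive $\N$-positions forces periodicity,'' whereas you posit the full $\P$-set $\mathcal Q$ up front and verify closure and reachability globally via residue arithmetic. Your framing is more self-contained (you do not need the unstated periodicity lemma, and you check the closure direction explicitly), at the cost of a little extra bookkeeping with modular wraparound; the paper's version is terser but leans on folklore. Your remark on minimality of the period is a small bonus the paper omits.
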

\begin{proof}
The outcome $o(x)=\P$ if $0\le x<a$, since there is no move to play. Now if  $a \le x < b$, because $b\le 2a$,  using the move $a$ we have $o(x-a)=\P$. Therefore, $o(x)= \N$ if $a \le x < b$. Similarly, for $b \le x < a+b$,  we find $o(x-b)= \P$ and for $a+b \le x < 2a+b$,  we find that $o(x-a-b)=\P$. Therefore, $o(x)= \N$ if $b \le x < 2a+b$. Altogether, we have  $o(x)=\N$ if $a\le x < 2a+b$. Since the number of consecutive \N-positions is the same as the largest move, this proves the statement.
\end{proof}
As a consequence, the \P-to-\P\ property from {\sc two-move} continues to hold in this particular case. 
\begin{cor}%[{\sc  additive three-move}  \P-to-\P]
\label{cor:threemove}
    Consider $S=\{a,b,a+b\}$ with  $b/2 \le a < b$. Then,
%\begin{enumerate}[a)]
%\item 
the position $x \in \P$ if and only if $x+2a+b \in \P$.
%\item the position $x \in \N$ if and only if $x+2a+b \in \N$.
%\end{enumerate}
\end{cor}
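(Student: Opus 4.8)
The plan is to read the corollary off directly from Theorem~\ref{thm:threemoveonedim}, which has already carried out all the substantive work. Under the hypothesis $b/2 \le a < b$, that theorem asserts that the outcome function $o$ of the ruleset $S=\{a,b,a+b\}$ is purely periodic with period $2a+b$: concretely $o(x)=\P$ for $0\le x<a$ and $o(x)=\N$ for $a\le x<2a+b$, and this block of $2a+b$ outcomes repeats forever. First I would restate pure periodicity in the explicit form $o(x)=o(x+2a+b)$ for every $x\ge 0$.

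The second step is a one-line logical observation. Since every position lies in exactly one of the two complementary classes \P~and \N~(the partition of the game board into previous- and next-player wins recalled in the introduction), the equality of outcomes $o(x)=o(x+2a+b)$ is logically equivalent to the biconditional $x\in\P \iff x+2a+b\in\P$. Indeed, the forward implication says that a \P-position is carried to a \P-position under the shift by $2a+b$, and the reverse implication is its contrapositive phrased through the complementary class \N. Combining the two steps yields the corollary at once.

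I do not expect any genuine obstacle: the entire content sits in the period-length computation of Theorem~\ref{thm:threemoveonedim}, and the corollary merely repackages ``period $2a+b$'' in the \P-to-\P~language that parallels Lemma~\ref{lem:PtoP}. Should a self-contained argument avoiding the explicit period be desired, one could instead imitate the proof of Lemma~\ref{lem:PtoP} by examining the three options $x+a$, $x+2a$, and $x+a+b$ of the position $x+2a+b$ and invoking $b\le 2a$ to locate a \P-option; but this would essentially re-derive the theorem, so quoting it is the economical route.
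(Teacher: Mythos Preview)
Your proposal is correct and follows exactly the paper's approach: the paper's proof is the single line ``This is immediate by Theorem~\ref{thm:threemoveonedim},'' and your two steps merely spell out what ``immediate'' means here.
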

\begin{proof}
  This is immediate by  Theorem~\ref{thm:threemoveonedim}. %the proof follows.  
\end{proof}

The ruleset $S_5$ is another example of  {\sc additive three-move}.
Let us list the first few outcomes: 
\vspace{2 mm} 

\begin{center}
\begin{tabular}{|l|l|l|l|l|l|l|l|l|l|l|l|l|l|l|l|}
\hline
$x$    & 0  & 1  & 2  & 3  & 4  & 5  & 6  & 7  & 8  & 9  & 10 & 11 & 12 & 13 & 14 \\ \hline
$o(x)$ & $\P$  & $\P$  & $\N$  & $\N$  & $\P$  & $\N$  & $\N$ & $\N$  & $\N$  & $\N$  & $\P$  & $\N$  & $\N$  & $\P$  & $\P$  \\ \hline
$x$    & 15 & 16 & 17 & 18 & 19 & 20 & 21 & 22 & 23 & 24 & 25 & 26 & 27 & 28 & 29 \\ \hline
$o(x)$ & $\N$  & $\N$  & $\N$  & $\N$  & $\N$  & $\N$  & $\N$  & $\P$  & $\P$  & $\N$  & $\N$  & $\P$  & $\N$  & $\N$  & $\N$  \\ \hline
\end{tabular}
\end{center}
In this case, $4 = 2a< b = 5$, and the outcome pattern is more complex than in the regime $2a \ge b$; the outcomes are purely periodic, but the period length sums up to 22. In particular, every  \P-to-\P\ rule fails to hold. These elementary observations have implications for our study of play in more than one dimension.

In Section~\ref{sec:2dresemble1d}, Theorem~\ref{thm:twomoveonedim} finds an instance when {\sc two-dimension} resembles {\sc one-dimension} if and only if $b\le 2a$, namely if $S=\{(a,a),(b,b),(a,b)\}$. In Section~\ref{sec:twomove}, which concerns {\sc two-move two-dimension} of the form $S=\{(a,b),(c,d)\}$, if we let $c \leq 2a$ and $d \leq 2b$, the outcome patterns are straightforward and closely resemble those for {\sc one-move} (Figure~\ref{fig:Lshape}). In  Section~\ref{sec:outcomesegment} we study {\sc symmetric additive three-move}, that is rulesets of the form $\{(a,b),(b,a),(a+b,a+b)\}$. And in Proposition~\ref{prop:symadd} we find solutions whenever $b\le 2a$. See also Figure~\ref{fig:additive}. On the other hand,  Figure~\ref{fig:1551add2552} points toward more complexity whenever $b>2a$. Similarly, we have observed regular behavior for {\sc asymmetric additive}, a ruleset of the form $S=\{(a,b),(b,a+b), (a,a+b)\}$ in a case when $b\le2a$; see Proposition~\ref{prop:OS}. 

\section{Two-dimensional rulesets that resemble one-dimensional  ones}
\label{sec:2dresemble1d}
This section is a gentle technical introduction to our subject in that it takes a small non-trivial step, while using a classical {\sc one-dimension} result, going to {\sc two-dimension}. 

The outcomes of {\sc one-move} $S=\{a\}$ satisfy $x\in\P$ if and only if 
    \begin{align}\label{eq:onemoveonedim}
    x\in \{0,\ldots , a-1\} \pmod{2a}.
    \end{align}

Let us recap the well-known {\sc two-move one-dimension} result, as arrived at in the previous section. We will state it in a way that we will generalize later. 
For {\sc one-dimension two-move}, say $S=\{a,b\}$, the period length of outcomes is at most $a+b$, and there is no pre-period.  

We are not aware of the first appearance of a proof. For a modern approach, see \cite{coleman2020department}. 
We write $\x\in\P(S)$ or $\x\in\N(S)$ for the outcome of position $\x$, with ruleset $S$. Consider two rulesets $S_1$ and $S_2$. We write $o_i(\x)\in \{\P,\N\}$ for the outcome of $\x$ with rules as in $S_i$, $i\in\{1,2\}$. 

\begin{thm}[{\sc two-move one-dimension}]\label{thm:twomoveonedim}
Suppose $a<b$, with $S_1=\{a\}$ and $S_2=\{a,b\}$. Then the outcomes of the ruleset $S_2$ are purely periodic, with period length $a+b$, or $2a$ in case $2a\mid a+b$. Namely,  $o_1(x)=o_2(x)$ if $0\le x<b$, and $o_2(x)=\N$ if $b\le x<a+b$. 
\end{thm}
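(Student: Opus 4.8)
The plan is to establish the claimed outcome pattern directly by induction on $x$, leaning on the two-move \P-to-\P\ principle (Lemma~\ref{lem:PtoP}) to propagate the base-case pattern throughout, and to treat the divisibility case $2a \mid a+b$ separately. First I would pin down the outcomes on the initial block $0 \le x < a+b$. For $0\le x<a$ neither move applies to reach a meaningful earlier position in the combined ruleset in the same way as $S_1$, so I would argue that $o_2(x)=o_1(x)$ on the whole range $0\le x<b$: when $x<b$, the move $b$ is unavailable, so the only move is $a$, and hence the recursion defining $o_2$ on $[0,b)$ coincides exactly with the recursion defining $o_1=\{a\}$ on $[0,b)$. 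This gives $o_1(x)=o_2(x)$ for $0\le x<b$ by a trivial induction, since both outcome functions satisfy the identical minimal-exclusion rule on that initial segment. Then for $b\le x<a+b$ I would check that $x\in\N$: the move $b$ sends $x$ to $x-b\in[0,a)$, and by \eqref{eq:onemoveonedim} every position in $[0,a)$ is a \P-position, so $x$ has a \P-option and is therefore in \N.

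Having established the pattern on $[0,a+b)$, I would invoke Lemma~\ref{lem:PtoP} with $\s_1=a$, $\s_2=b$, which states $x\in\P \iff x+a+b\in\P$. This immediately gives pure periodicity with period dividing $a+b$: the outcome at any $x$ is determined by its residue modulo $a+b$ via the explicit description on the fundamental block. So the outcome sequence is purely periodic with period length $a+b$ (no preperiod), which is the generic claim.

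The remaining, and main, subtlety is the exceptional period length $2a$ when $2a \mid a+b$. The point is that $a+b$ is always a valid period by the Lemma, but it need not be the minimal one. When $2a\mid a+b$, I would show the sequence actually has the smaller period $2a$: write $a+b = 2ak$ for some integer $k$. In this case the block structure on $[0,a+b)$ consists of $k$ copies of the length-$2a$ pattern ``$a$ \P-positions followed by $a$ \N-positions'' inherited from $S_1$ (by \eqref{eq:onemoveonedim}), so the period collapses to $2a$. Concretely I would verify that on $[0,b)$ the outcomes already agree with the $2a$-periodic pattern of $S_1$, and that the \N-block $[b,a+b)$ is consistent with continuing that same $2a$-periodic pattern—this works precisely because $b\equiv a \pmod{2a}$ forces the positions $[b,a+b)$ to land on the \N-phase of the $S_1$ pattern. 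I expect this divisibility case to be the main obstacle: one must argue not merely that $2a$ is \emph{a} period but that the description ``$o_1(x)=o_2(x)$ for $0\le x<b$'' is compatible with the shorter period, i.e. that no new \P-positions are introduced in $[b,a+b)$ beyond what $2a$-periodicity predicts. Once the explicit outcomes on the fundamental domain are fixed, the periodicity statement itself is a direct corollary of Lemma~\ref{lem:PtoP}, so the care lies entirely in the base-case bookkeeping and the divisibility reduction.
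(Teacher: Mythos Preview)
Your proposal is correct and follows essentially the same approach as the paper: establish the base case $o_1(x)=o_2(x)$ on $[0,b)$ by noting move $b$ is unavailable there, show $[b,a+b)\subset\N$ via the option $x-b\in[0,a)\subset\P$, then invoke Lemma~\ref{lem:PtoP} for the periodicity. The paper's proof is a terse three-line version of exactly this; you add the explicit bookkeeping for the $2a\mid a+b$ reduction, which the paper states in the theorem but leaves implicit in the proof.
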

\begin{proof}
Base case: obviously $o_1(x)=o_2(x)$ if $0\le x<b$. And $o_2(x)=\N$ if $b\le x<a+b$, since in this case $o_2(x-b)=\P$. Now apply Lemma~\ref{lem:PtoP} with $d=1$.
\end{proof}
{\sc two-move} that resemble {\sc one-move} satisfies the condition $2a\ge b$, with notation as in Theorem~\ref{thm:twomoveonedim}. 

    \begin{cor}\label{cor:onedim}
    Consider $S = \{a,b\}$, with $a<b$. The outcomes are purely periodic of the form ``$x\in\P$ if and only if $x\in \{0,\ldots , a-1\} \pmod{a+b}$'', if and only if $ b\le 2a$. The position $2a\in \P$ if and only if $2a<b$.
     
    \end{cor}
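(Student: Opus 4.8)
The plan is to deduce everything from Theorem~\ref{thm:twomoveonedim} together with the explicit {\sc one-move} pattern \eqref{eq:onemoveonedim}, using the single position $x=2a$ as the discriminating case. Theorem~\ref{thm:twomoveonedim} already gives that $o_2$ is purely periodic with period $a+b$, that $o_2(x)=o_1(x)$ for $0\le x<b$, and that $o_2(x)=\N$ for $b\le x<a+b$. Since $a<b$ forces $2a<a+b$, the target pattern ``$x\in\P$ iff $x\in\{0,\ldots,a-1\}\pmod{a+b}$'' is, over one period $[0,a+b)$, equivalent to the two assertions: $o_2=\P$ exactly on $[0,a)$ and $o_2=\N$ on $[a,a+b)$. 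The first is automatic from \eqref{eq:onemoveonedim} because $[0,a)\subset[0,b)$ and $o_1=\P$ there; so the real content is whether $o_2=\N$ throughout $[a,a+b)$.

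First I would treat the range $[a,b)$, where $o_2=o_1$. By \eqref{eq:onemoveonedim}, $o_1$ equals $\N$ precisely on $[a,2a)\pmod{2a}$, so $o_1=\N$ on all of $[a,b)$ exactly when $[a,b)\subseteq[a,2a)$, i.e.\ exactly when $b\le 2a$. Combined with the unconditional $o_2=\N$ on $[b,a+b)$, this shows that $b\le 2a$ implies $o_2=\N$ on all of $[a,a+b)$, which is the claimed pattern mod $a+b$. Here one checks that $a<b\le 2a$ gives $2a<a+b\le 3a$, so $2a\nmid a+b$ and the period is exactly $a+b$ (the alternative ``$2a$'' branch of Theorem~\ref{thm:twomoveonedim} does not occur).

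For the converse I would exhibit a witness at $x=2a$. If $b>2a$, then $2a<b$, so $o_2(2a)=o_1(2a)=\P$ since $2a\equiv 0\pmod{2a}$; yet the candidate pattern demands $2a\in\N$, because $2a\pmod{a+b}=2a\notin\{0,\ldots,a-1\}$. Hence the pattern fails whenever $b>2a$, which completes the equivalence. The very same computation settles the last sentence directly: if $2a<b$ then $o_2(2a)=o_1(2a)=\P$, whereas if $b\le 2a$ then $2a\in[b,a+b)$ and so $o_2(2a)=\N$; thus $2a\in\P$ if and only if $2a<b$.

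Once Theorem~\ref{thm:twomoveonedim} is invoked the argument is essentially interval bookkeeping, so I do not expect a genuine obstacle; the only point demanding care is that the discriminating position $2a$ sits in different pieces of the Theorem's case split depending on whether $b>2a$ or $b\le 2a$ (the copied region $[0,b)$ versus the forced-$\N$ region $[b,a+b)$), and it is exactly this switch that simultaneously drives the pattern equivalence and the membership claim $2a\in\P\iff 2a<b$.
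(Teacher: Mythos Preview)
Your proposal is correct and takes essentially the same approach as the paper's proof, which is simply ``Combine equation~\eqref{eq:onemoveonedim} with Theorem~\ref{thm:twomoveonedim}.'' You have merely spelled out the interval bookkeeping that the paper leaves implicit, including the discriminating role of the position $2a$ and the observation that the $2a\mid a+b$ branch of Theorem~\ref{thm:twomoveonedim} does not arise when $a<b\le 2a$.
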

    \begin{proof}
        Combine equation~ \eqref{eq:onemoveonedim} with Theorem~\ref{thm:twomoveonedim}.
    \end{proof}

Corollary~\ref{cor:onedim} will be useful to analyze some rulesets in {\sc two-dimension}.
\begin{defi}[Symmetric Ruleset]
\label{def:sym}
A ruleset $S\in \B^2$ is symmetric if, for all $(a,b)\in S$, $(b,a)\in S$. 
\end{defi}

\begin{defi}[Twin Ruleset]
\label{def:twin}
A ruleset $S\in\B^2$ is twin if, for all $(a,b)\in S$, $a=b$. 
%A sibling ruleset $S\in\B^2$ satisfies, for given $k,\ell\in \n$ for all $(a,b)\in S$, $ka=\ell b$.
\end{defi}
For example $S=\{(1,2),(2,1)\}$ is a symmetric ruleset and $S=\{(1,1),(2,2)\}$ is a twin ruleset. So, twin rulesets are trivially symmetric. And twin rulesets are derivations of one-dimensional games in the following sense. 
\begin{defi}[Ruleset Derivation]
\label{def: Ruleset Derivation}
Consider a pair of game boards $\B^1$ and $\B^2$ with corresponding dimensions $d_1<d_2$. Let $S_1=S_1(\B^1)$ and let $S_2=S_2(\B^2)$ be two rulesets. Then $S_2$ is an $f$-derivation of $S_1$ if there is a linear  surjective function $f:\B^2\rightarrow \B^1$ such that, for all positions $ \boldsymbol x\in \B^2$, $o_1(f(\boldsymbol x))=o_2(\boldsymbol x)$. 
\end{defi}

For example let $d_1=1$ and $d_2=2$,  with $S_2=\{(s,0)\mid s\in S_1\}$. Then $S_2$ is an $f$-derivation of $S_1$ for $f((x,y))=x$. As seen by this example, ruleset derivation can be much stronger than an outcome correspondence. In this example, the game trees are multiplied for the larger ruleset, and there is no interaction between them. But for our purpose, it suffices to interpret derivation as `outcome-derivation'. Indeed, this gives an opportunity to explore when adjoining moves to the second ruleset does not interfere with the outcomes of the first ruleset. It is also the correct notion in the context of {\em mechanism design} of combinatorial games. When does a given pattern of candidate outcomes in {\sc two-dimension} correspond to a {\sc one-dimension} derivation? When can it be modeled by a ruleset in {\sc two-dimension}? Which patterns are not possible to model as {\sc subtraction} outcomes? (See \cite{larsson2013impartial,MR4188748}.)

Figure~\ref{fig:1-derivation} is an example of a 2-segmentation that is a derivation from {\sc one-dimension}.  The following result generalizes this idea.
\begin{figure}[htbp!]
 \begin{center}
 \includegraphics[width=8cm]{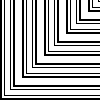}
 \end{center}
 \caption{%A symmetric 2-segmentation, which is derived from a one-dimensional game: 
 These are the  initial outcomes of {\sc three-move} $S =\{(2,2),(5,5),(7,7)\}$.}
 \label{fig:1-derivation}
 \end{figure}
\begin{lem}[Twin Ruleset Derivation]\label{lem:twinsetder}
    Consider $S_1$ in {\sc one-dimension}, and let $S_2=\{(s,s)\mid s\in S_1\}$. For all $(x,y)\in\B^2$, let 
\begin{equation*}
\label{Eqn_twin_Set}
	f((x,y))=
	\begin{cases}
		x, &\text{if }\  x\le y\\
		y , & \text{otherwise}.
	\end{cases}
\end{equation*}
 Then $S_2$ is an $f$-derivation of $S_1$.
\end{lem}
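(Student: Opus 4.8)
The plan is to observe that $f((x,y)) = \min(x,y)$ and that this function commutes with the move structure of the twin ruleset, and then to run an induction on $\min(x,y)$. The crucial combinatorial fact is that subtracting $(s,s)$ from $(x,y)$ is legal precisely when $s \le \min(x,y)$, which is exactly the condition for $s$ to be a legal move from $\min(x,y)$ in $S_1$. Moreover, subtracting the same value from both coordinates preserves which coordinate is the smaller, so $\min(x-s,y-s) = \min(x,y) - s$. Thus $f$ sends the option $(x-s,y-s)$ of $(x,y)$ to the option $\min(x,y)-s$ of $\min(x,y)$, and this correspondence is a bijection between the option set of $(x,y)$ under $S_2$ and the option set of $\min(x,y)$ under $S_1$.

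First I would establish the base case: if $\min(x,y) = 0$, then no move $(s,s)$ with $s \in S_1$ (hence $s \ge 1$) is available, so $(x,y)$ is terminal and $o_2((x,y)) = \P$; likewise $o_1(0) = \P$, since $0$ is terminal in $S_1$. For the inductive step I would assume the claim for all positions of strictly smaller $\min$-value. Since every option $(x-s,y-s)$ of $(x,y)$ satisfies $\min(x-s,y-s) = \min(x,y)-s < \min(x,y)$, the induction hypothesis gives $o_2((x-s,y-s)) = o_1(\min(x,y)-s)$. Hence the set of outcomes of the options of $(x,y)$ coincides with the set of outcomes of the options of $\min(x,y)$.

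Finally I would invoke the normal-play recursion: a position lies in $\P$ if and only if all of its options lie in $\N$ (vacuously true at a terminal position), and it lies in $\N$ if and only if it has an option in $\P$. Because $(x,y)$ and $\min(x,y)$ have identical sets of option-outcomes, they receive the same outcome, yielding $o_2((x,y)) = o_1(\min(x,y)) = o_1(f((x,y)))$, as required.

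I expect the main obstacle to be bookkeeping rather than conceptual: carefully verifying that the move-correspondence is a genuine bijection in both directions, so that no $1$-dimensional move $s \le \min(x,y)$ is lost and no spurious $2$-dimensional move appears, and that the diagonal-translation identity $\min(x-s,y-s)=\min(x,y)-s$ is applied correctly to keep the induction well-founded. The two cases $x \le y$ and $x > y$ in the definition of $f$ both reduce to $\min$, so they can be handled uniformly; this should be remarked but poses no real difficulty.
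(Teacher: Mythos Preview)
Your proposal is correct and follows essentially the same approach as the paper: induction on the minimum coordinate, using that a diagonal move $(s,s)$ is legal from $(x,y)$ precisely when $s\le\min(x,y)$ and that $\min(x-s,y-s)=\min(x,y)-s$. The paper splits into the cases $x\le y$ and $x>y$ and then into the subcases $o_1(x')=\P$ and $o_1(x')=\N$, whereas you handle everything uniformly via $f=\min$ and the bijection between option sets; this is a mild streamlining but not a different argument.
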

\begin{proof} %Suppose $o_1(x) = \P$. Then, for all $s\in S_1$, $o_1(x-s)= \N$. 
Suppose $x\le y$. For a base case take $x=0$. Observe that, for all $y\ge 0$, $o_2(0,y)=\P$, because $(0,y)$ is terminal since $S_2$ is twin. Fix an $ x' > 0$ and assume that for all $x< x'$,    $o_1(x) = \P$ if and only if for all $y\ge x$, $o_2(x,y)=\P$.\vspace{2 mm}

\noindent \textbf{Case 1:}  $x' \in \P$. We have to show that, for all $y \ge x'$, $o_2(x',y)=\P$. 
 Since  $x' \in \P$  then, for all $s \in S_1$, $o_1(x'-s)=\N$. By the induction hypothesis, for all $y\ge x'$, for all $(s,s) \in S_2$,  $o_2(x'-s,y-s)=\N$  (because $y-s \ge x'-s$). Hence, for all $y\ge x'$, $o_2(x',y)=\P$.\vspace{2 mm}

\noindent \textbf{Case 2:}  $x' \in \N$. We have to show that, for all $y\ge x' $, $o_2(x',y)=\N$. Since  $x' \in \N$  then there exists an  $s \in S_1$ such that $o_1(x'-s)=\P$.
By the induction hypothesis,  for any $y \ge x'$, $o_2(x'-s,y-s)=\P$  (because $y-s \ge x'-s$).  Hence  for all $y$, $o_2(x',y)=\N$.\vspace{2 mm}

The proof is analogous  for $x > y$.
\end{proof}

Sometimes (symmetric) expansions of twin rulesets remain derivations of {\sc one-dimension}.   
\begin{thm}[Symmetric Ruleset Expansion]
\label{thm:twinadjoin}
    For fixed $a,b\in \n$, with $a<b$ let $S=\{(a,a),(b,b)\}$. Then $\P(S)=\P(S\cup\{(a,b)\})=\P(S\cup\{(a,b),(b,a)\})$ if and only if $ b/2\le a$. 
\end{thm}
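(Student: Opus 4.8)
The plan is to reduce everything to the one-dimensional ruleset $\{a,b\}$ via the twin derivation already established. By Lemma~\ref{lem:twinsetder}, the outcomes of $S=\{(a,a),(b,b)\}$ are governed by $o(x,y)=\omega(\min(x,y))$, where $\omega$ denotes the one-dimensional outcome function of $\{a,b\}$. I would treat the two directions of the equivalence separately, working throughout with this candidate function, and in the forward direction verifying that it remains \emph{consistent} after adjoining the new moves.

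For the forward implication (assume $b/2\le a$, i.e.\ $b\le 2a$), Corollary~\ref{cor:onedim} gives the explicit form: $\omega(x)=\P$ iff $x\equiv r\pmod{a+b}$ with $0\le r\le a-1$. I then check that $o(x,y)=\omega(\min(x,y))$ satisfies the defining recurrence of the outcome function for the enlarged rulesets $S\cup\{(a,b)\}$ and $S\cup\{(a,b),(b,a)\}$; by uniqueness of the outcome function on a well-founded game this forces the $\P$-sets to coincide. The \N-condition is automatic, since every $S$-move witnessing an \N-position survives in the larger ruleset. So everything reduces to the \P-condition: for each \P-position $(x,y)$ and each newly adjoined move $m$, the position $(x,y)-m$ (when legal) must be an \N-position. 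By the $x\leftrightarrow y$ symmetry of $\min$ it suffices to treat $m=(a,b)$. I would split on whether the minimum of $(x,y)-(a,b)$ is realized by the first or the second coordinate, and a short computation modulo $a+b$ shows that in every case the resulting residue lands in the \N-range $[a,a+b-1]$. This is precisely where the hypothesis $b\le 2a$ is consumed, since it underlies the clean residue description of $\P$ supplied by Corollary~\ref{cor:onedim}.

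For the reverse implication I argue the contrapositive: if $b>2a$ then already the first equality fails. I would exhibit the explicit witness $p=(2a,b)$. Since $b>2a$, Corollary~\ref{cor:onedim} gives $2a\in\P$ for $\{a,b\}$, hence $p\in\P(S)$ because $\min(2a,b)=2a$. But the newly legal move $(a,b)$ sends $p$ to $(a,0)$, which is terminal, and therefore a \P-position, in every ruleset considered here. Thus $p$ acquires a \P-option in $S\cup\{(a,b)\}$ and becomes an \N-position there, so $\P(S)\ne\P(S\cup\{(a,b)\})$ and the chain of equalities breaks.

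The main obstacle is the consistency argument in the forward direction: one cannot simply assert that adjoining moves preserves \P-positions, because for subtraction rulesets adding moves does \emph{not} in general preserve outcomes monotonically. The argument must instead proceed by verifying that the concrete candidate $o(x,y)=\omega(\min(x,y))$ satisfies the game recurrence under the enlarged move set, and the technical heart is the bookkeeping in the \P-condition check, tracking which coordinate realizes the minimum after subtraction and the associated residue modulo $a+b$.
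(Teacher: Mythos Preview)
Your proposal is correct and follows essentially the same route as the paper. Both arguments reduce to the one-dimensional ruleset via the twin derivation (Lemma~\ref{lem:twinsetder}), invoke the explicit residue description of Corollary~\ref{cor:onedim} under the hypothesis $b\le 2a$, verify that the new move $(a,b)$ sends every $\P$-position to an $\N$-position by a case split on which coordinate realizes the minimum after subtraction, and use the identical witness $(2a,b)$ for the contrapositive direction. Your framing as ``verify that the candidate $\omega(\min(x,y))$ satisfies the enlarged recurrence, then appeal to uniqueness'' is a clean repackaging of the paper's explicit induction on positions, but the content of the verification is the same.
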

\begin{proof}
Let $S_1=\{(a,a),(b,b)\}$ and let $S_2= S_1 \cup \{(a,b)\}$. We begin by proving  $\P(S_1)=\P(S_2)$ in the case of $b/2 \le a < b $. The base case  is  $(0,0) \in \P(S_1) \cap \P(S_2)$. Fix $(x', y')$ and 
assume that, for  all positions $(x ,y)<(x', y')$,  $o_1(x , y)=  o_2(x , y)$.\vspace{2 mm}

\noindent \textbf{Case 1:} $(x', y') \in \P(S_1)$. We have to show that $ ( x', y')\in \P(S_2)$. That holds if $(x'-a, y'-b)\in \N(S_2)$. By the induction assumption, it suffices to prove that $(x'-a, y'-b)\in \N(S_1)$, which holds by using Lemma~\ref{lem:twinsetder}, $f$-derivation and the first part of Corollary~\ref{cor:onedim}, since $b\le 2a$. Namely, in case $y'\ge x'$, these results imply that $x'\in \{0,\ldots , a-1\} \pmod{a+b}$. And so $x'-a\not\in \{0,\ldots , a-1\} \pmod{a+b}$. Thus, if $y'-b\ge x'-a$, the $f$-derivation proves that $(x'-a,y'-b)\in \N$. Suppose therefore that $y'\ge x'$ with $x'\in \{0,\ldots , a-1\} \pmod{a+b}$, but $y'-b < x'-a$. We must prove that $y'-b\not\in \{0,\ldots , a-1\} \pmod{a+b}$. We get $x'-b\le y'-b < x'-a$. This means that $y'-b\in \{-b,\ldots , -1\}\equiv \{a,\ldots , a+b-1\} \pmod{a+b}$.\vspace{2 mm}

 \noindent \textbf{Case 2:} If $(x', y') \in \N(S_1)$,   we must prove that $ ( x', y')\in \N(S_2)$. That holds since either $(x'-a, y'-a)\in \P(S_2)$ or $(x'-b, y'-b)\in \P(S_2)$, by the induction assumption. \vspace{2 mm}

The equality $\P(S\cup\{(a,b)\})=\P(S\cup\{(a,b),(b,a)\})$ follows by an analogous argument.

Next, suppose that $b/2 > a$. Consider $\x=(2a,b)$. Then $\x\in\P(S_1)$, by combining the second part of Corollary~\ref{cor:onedim} with the $f$-derivation in Lemma~\ref{lem:twinsetder}. But $\x\in\N(S_2)$, since $\x-(a,b)\in\P(S_2)$. 
\end{proof}

\section{Two-move subtraction in two dimensions.}\label{sec:twomove}
As we saw  Figure~\ref{fig:dilbox}
%on the second page of this paper, 
already {\sc two-move} contains interesting patterns. However ``Diluted Boxes'' is by no means the unique pattern formation for {\sc two-move}. We will show a few more pictures in this section to guide us through the outcomes of {\sc two-move} via a geometric approach.  On the other hand, Section ~\ref{sec:anydimension}, which analyses {\sc two move} in any dimension,  focuses more on the \P/\N-decision problem and its complexity. Here we develop geometrical ideas through Figure~\ref{fig:cuttingprinciple}, and we will obtain detailed results without indulging in a tedious case-by-case analysis.
We begin by recalling the trivial she-loves-me-she-loves-me-not outcome structures of {\sc one-move}. 

\begin{lem}[{\sc one-move}]\label{lem:onemove}
 Let $a$ and $b$ be non-negative integers such that at least one is positive. Consider the subtraction game $S=\{(a,b)\}$. Then,
\begin{enumerate}
    \item [a)]  the position $(x,y)\in \P$ if and only if $(x+a, y+b)\in \N$;
  
    \item [b)]  the position $(x,y)\in \P$ if and only if $(x+2a, y+2b)\in \P$;
   
    \item [c)]  the position $(x,y)\in \N$ if and only if  $(x+2a, y+2b)\in \N$.
\end{enumerate}
\end{lem}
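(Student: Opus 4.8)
The plan is to exploit the defining feature of {\sc one-move}: from any position $(x,y)$ there is at most one legal move, namely subtracting $(a,b)$, and this move is available precisely when $(x,y)\ge(a,b)$. Every position that is not $\ge(a,b)$ is terminal and hence lies in $\P$. Since every sequence of play is finite (here using that at least one of $a,b$ is positive, so subtraction strictly decreases the position), the outcome of each position is well defined, and I may freely use that $(x,y)\in\P$ exactly when all of its options lie in $\N$, while $(x,y)\in\N$ exactly when some option lies in $\P$.

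First I would prove part (a), from which the other two parts follow formally. The key observation is that the position $(x+a,y+b)$ always has exactly one option, namely $(x,y)$, because $(x+a,y+b)-(a,b)=(x,y)\ge\boldsymbol 0$ and no other subtraction is permitted. Consequently the outcome of $(x+a,y+b)$ is completely determined by that of $(x,y)$: if $(x,y)\in\P$ then $(x+a,y+b)$ has a winning move and so lies in $\N$; if $(x,y)\in\N$ then the unique option of $(x+a,y+b)$ lies in $\N$, so $(x+a,y+b)\in\P$. Combined with the fact that $\P$ and $\N$ partition the board, this yields the biconditional $(x,y)\in\P \iff (x+a,y+b)\in\N$ of part (a).

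For part (b) I would simply iterate part (a). Applying it to $(x,y)$ gives $(x,y)\in\P \iff (x+a,y+b)\in\N$, and applying it again to the position $(x+a,y+b)$ gives, in contrapositive form, $(x+a,y+b)\in\N \iff (x+2a,y+2b)\in\P$. Chaining these two equivalences produces $(x,y)\in\P \iff (x+2a,y+2b)\in\P$. Part (c) is then nothing more than the complement of part (b): since $\P$ and $\N$ partition the board, $(x,y)\in\N$ iff $(x,y)\notin\P$ iff $(x+2a,y+2b)\notin\P$ iff $(x+2a,y+2b)\in\N$.

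There is essentially no hard part here; the only point requiring a little care is the direction bookkeeping in part (a), where one must track that a $\P$-position at $(x,y)$ forces an $\N$-position one step up at $(x+a,y+b)$, and vice versa, rather than the naive ``same outcome'' one might first expect. Once (a) is phrased as this clean biconditional, parts (b) and (c) are immediate by iteration and complementation, so no explicit induction on position size is needed beyond the implicit well-foundedness that guarantees outcomes exist.
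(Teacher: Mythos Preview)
Your argument is correct; the paper itself omits the proof of this lemma entirely, so there is nothing to compare against. Your reasoning---one legal option from $(x+a,y+b)$ determines the outcome alternation in (a), then iterate for (b) and complement for (c)---is exactly the intended elementary justification.
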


\begin{proof} 
Omitted. 
 \end{proof}

The following result will be used to derive the outcome patterns of  {\sc two-move}. For $a, b \in \nz $,  $a+b>0$, $n\in \nz $, let  the $n$th L-shape be $$L_n =   \left\{(x + 2na,y)\mid    0 \le x< a,  2nb\le y   \right\} \cup \left\{(x, y + 2nb)\mid    0 \le y< b , 2na\le x  \right\}. $$

\begin{lem}[{\sc one-move} Structure]\label{lem:onemovestruct} 
  Consider any {\sc one-move} ruleset $S= \{(a,b)\}$. 
%\begin{enumerate}[a)]
    % \item If both $a,b>0$, then 
    Then $(x,y)\in \P$ if and only if  $(x,y) \in \bigcup_{n\in \nz}  L_n$. 
\end{lem}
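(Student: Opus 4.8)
The plan is to verify that the set $\mathcal{L}:=\bigcup_{n\in\nz}L_n$ is \emph{the} set of \P-positions, using the standard characterization: in a finite acyclic impartial game, $\P$ is the unique set such that (i) no move from a position in the set stays in the set, and (ii) every position outside the set has a move into the set. (Condition (ii) forces every terminal position into the set, which is consistent since reading the definition at $n=0$ gives $L_0=\{(x,y):x<a \text{ or } y<b\}$, exactly the terminal positions.) Since the only available move is subtraction of $(a,b)$, legal exactly when $x\ge a$ and $y\ge b$, both conditions reduce to understanding how $\mathcal{L}$-membership behaves under this single move.

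The key step is to put $\mathcal{L}$ in closed form. Writing $p=\lfloor x/a\rfloor$ and $q=\lfloor y/b\rfloor$ (dropping the term corresponding to a coordinate whose move-component is $0$), the first component set of $L_n$ amounts to ``$p=2n$ and $q\ge p$'' and the second to ``$q=2n$ and $p\ge q$''. Taking the union over all $n\in\nz$ collapses these to the single clean condition $(x,y)\in\mathcal{L}\iff\min(p,q)\text{ is even}$. From this I would deduce the ``toggle'' claim: when a move is legal, i.e.\ $p,q\ge 1$, subtracting $(a,b)$ sends $(p,q)$ to $(p-1,q-1)$, lowering $\min(p,q)$ by one and flipping its parity, so exactly one of $(x,y)$ and $(x-a,y-b)$ lies in $\mathcal{L}$.

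The toggle claim settles both closure conditions simultaneously. For (i), if $(x,y)\in\mathcal{L}$ admits a legal move then toggling puts $(x-a,y-b)$ outside $\mathcal{L}$. For (ii), if $(x,y)\notin\mathcal{L}$ then $\min(p,q)$ is odd, hence $p,q\ge 1$; the position is non-terminal and toggling places its unique option $(x-a,y-b)$ inside $\mathcal{L}$. By uniqueness, $\mathcal{L}=\P$, which is the claim. (As a cross-check, $L_n=L_0+(2na,2nb)$, so the inclusion $\mathcal{L}\subseteq\P$ also follows directly from the period-$(2a,2b)$ invariance in Lemma~\ref{lem:onemove}(b).)

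I expect the only delicate point to be the closed-form rewriting of $\bigcup_n L_n$ as ``$\min(p,q)$ even'': one must check the overlap case $p=q$ (where the two component sets agree) and the degenerate rulesets $a=0$ or $b=0$, in which one of the two strips defining each $L_n$ is empty and $\min(p,q)$ should be read as the single surviving floor. Once this bookkeeping is done, the toggle claim and the final identification are immediate.
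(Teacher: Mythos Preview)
Your proof is correct. The closed-form rewriting $(x,y)\in\mathcal L\iff\min\bigl(\lfloor x/a\rfloor,\lfloor y/b\rfloor\bigr)$ is even is a clean and valid reformulation of the union of L-shapes, and the ``toggle'' observation that a legal move decreases $\min(p,q)$ by exactly one immediately yields both stability conditions of the $\P$-set. The bookkeeping for the degenerate cases $a=0$ or $b=0$ is handled as you say.

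The paper's own argument is much terser and proceeds differently: it simply identifies $L_0$ as the set of terminal positions and then invokes Lemma~\ref{lem:onemove}, i.e.\ the period-$(2a,2b)$ invariance of outcomes, to propagate the pattern. In effect the paper outsources the inductive work to the (unproved) Lemma~\ref{lem:onemove}, whereas you give a self-contained direct verification via the $\min(p,q)$ parity. Your route has the advantage of being fully explicit and of producing a closed-form membership test; the paper's route is shorter on the page but relies on the reader filling in the she-loves-me-she-loves-me-not structure from Lemma~\ref{lem:onemove}. The cross-check you mention at the end is essentially the paper's entire argument.
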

 \begin{proof}
     Note the terminal \P-positions, $L_0 =   \{(x,y)\mid    0 \le x< a,  0\le y   \} \bigcup \{(x, y )\mid    0 \le y< b , 0\le x  \}$, and apply Lemma~\ref{lem:onemove}.
 \end{proof}
 \begin{rem}
The L-shapes $L_n$ for any $n \in \nz$ collapse to parallel strips, horizontal or vertical, whenever $a=0$ or $b=0$ respectively. See Figures~\ref{fig:horizontal} and \ref{fig:Lshape}. 
 \end{rem}

The following result is proved in more generality in Section~\ref{sec:intro}  (Lemma~\ref{lem:PtoP}).
%and so we omit a proof here.  
 \begin{lem*}[$\P$-to-$\P$ in two dimensions]
 %\label{lem:PTOP2D}
Let $S= \{ (a,b), (c,d)\}$.  Then $(x,y) \in \P$  if and only if   $(x+a+c,  y+b+d)\in  \P$.

\end{lem*}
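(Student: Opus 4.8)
The plan is to recognize that this statement is exactly the two-dimensional instance of Lemma~\ref{lem:PtoP}, which has already been proved in arbitrary dimension. The shortest route is therefore to set $\s_1=(a,b)$ and $\s_2=(c,d)$, read $(x,y)+\s_1+\s_2=(x+a+c,\,y+b+d)$, and invoke Lemma~\ref{lem:PtoP} directly; nothing further is formally required. For a self-contained sketch, I would simply reproduce the two-directional argument of that lemma with the concrete moves, as follows.

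For the forward implication, suppose $(x,y)\in\P$. Then both options $(x+a,y+b)$ and $(x+c,y+d)$ lie in $\N$, since a \P-position has only \N-options. The key observation is that $(x+a+c,\,y+b+d)$ has exactly these same two positions as its options: subtracting $(a,b)$ yields $(x+c,y+d)$, and subtracting $(c,d)$ yields $(x+a,y+b)$. As both options are in $\N$, the position is in $\P$. This direction is routine and does not depend on the relative sizes of the move parameters.

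The reverse implication is the main obstacle, because from $(x+a+c,y+b+d)\in\P$ one only learns that its two options $(x+a,y+b)$ and $(x+c,y+d)$ are in $\N$, whereas $(x,y)$ sits two moves below. The plan is a case split on the second-level options, mirroring the proof of Lemma~\ref{lem:PtoP}. Each \N-position has at least one \P-option; the options of $(x+a,y+b)$ are $(x,y)$ and $(x+a-c,y+b-d)$, and the options of $(x+c,y+d)$ are $(x+c-a,y+d-b)$ and $(x,y)$. If $(x+a-c,y+b-d)\in\N$ or $(x+c-a,y+d-b)\in\N$, then the only remaining \P-option of the corresponding position must be $(x,y)$ itself, forcing $(x,y)\in\P$. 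Otherwise both $(x+a-c,y+b-d)$ and $(x+c-a,y+d-b)$ lie in $\P$; subtracting $(a,b)$ from the former gives $(x-c,y-d)\in\N$, and subtracting $(c,d)$ from the latter gives $(x-a,y-b)\in\N$, so every option of $(x,y)$ is in $\N$ and hence $(x,y)\in\P$.

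The only care needed throughout is bookkeeping of move validity, i.e. ensuring that each position produced by a subtraction has nonnegative coordinates; a subtraction that leaves the board is simply not an available option, which only strengthens the conclusion that a position is in $\P$. The combinatorial skeleton is identical to the arbitrary-dimension proof, so I would expect no genuinely new difficulty beyond this componentwise nonnegativity tracking.
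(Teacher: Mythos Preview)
Your proposal is correct and matches the paper exactly: the paper simply notes that this two-dimensional statement is the special case of Lemma~\ref{lem:PtoP} already proved in arbitrary dimension, and your self-contained sketch reproduces that lemma's two-directional argument (including the case split on the second-level options) with the concrete moves $(a,b)$ and $(c,d)$.
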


Due to Lemma~\ref{lem:PtoP} we define a ruleset dependent translation. 
%\In{Lemma ~\ref{lem:PtoP} is not itemized form.}

 \begin{defi}[Translation Set]\label{def:PtoP} 
 Let $S=\{(a,b),(c,d)\}$ and let $X\subset \B^2$. Then $T(X)=T_S(X):=\{(x+a+c,y+b+d) \mid (x,y)\in X\}$.
 \end{defi}
 We may iterate this set translation.

\begin{obs}\label{obs:T}
Note that, by Lemma~\ref{lem:PtoP}, for all $n\in\nz$, $X\subset\P$, if and only if $T^n(X)=\{(x+n(a+c), y+n(b+d)) \mid (x,y)\in X\}\subset \P$.
\end{obs}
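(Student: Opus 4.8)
The plan is to obtain the statement as a direct iteration of Lemma~\ref{lem:PtoP}, splitting the claim into two halves: first pinning down the explicit form of the iterated translation $T^n$, and then transferring the $\P$-membership equivalence from single positions to arbitrary subsets.

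First I would verify the displayed formula $T^n(X)=\{(x+n(a+c),\,y+n(b+d))\mid (x,y)\in X\}$ by induction on $n$. The base case $n=0$ gives $T^0(X)=X$, which matches the formula with zero shift. For the inductive step I would write $T^{n+1}(X)=T(T^n(X))$ and apply Definition~\ref{def:PtoP}, which adds $(a+c,b+d)$ to each coordinate of every element; combined with the induction hypothesis this turns the shift $n(a+c,b+d)$ into $(n+1)(a+c,b+d)$, as required.

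Next I would establish the single-position form of the equivalence. Setting $\x=(x,y)$, Lemma~\ref{lem:PtoP} with $\s_1=(a,b)$ and $\s_2=(c,d)$ states exactly that $\x\in\P$ if and only if $\x+(a+c,b+d)\in\P$. A second induction on $n$, invoking this biconditional at each step, yields $\x\in\P$ if and only if $\x+n(a+c,b+d)\in\P$. The feature being exploited here is that Lemma~\ref{lem:PtoP} is an \emph{equivalence}, so both directions propagate through the iteration and no information is lost along the way.

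Finally I would lift this to sets: by definition $X\subset\P$ means every position of $X$ lies in $\P$, which by the single-position result holds precisely when every shifted position $(x+n(a+c),y+n(b+d))$ with $(x,y)\in X$ lies in $\P$, that is, exactly when $T^n(X)\subset\P$. I do not expect any genuine obstacle; the statement is essentially bookkeeping on top of Lemma~\ref{lem:PtoP}. The only point that merits a little care is keeping the quantifier over points of $X$ and the induction over $n$ cleanly separated, so that the set-level biconditional follows without circularity from the pointwise one.
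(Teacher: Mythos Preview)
Your proposal is correct and takes essentially the same approach as the paper: the paper treats this as an observation that follows immediately from Lemma~\ref{lem:PtoP} by iteration, and your write-up simply makes that iteration explicit via the two inductions (on the formula for $T^n$ and on the pointwise $\P$-equivalence) together with the routine lift to subsets. There is nothing to add or correct.
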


 \tikzset{decorate sep/.style 2 args={decorate,decoration={shape backgrounds,shape=circle,shape size=#1,shape sep=#2}}}

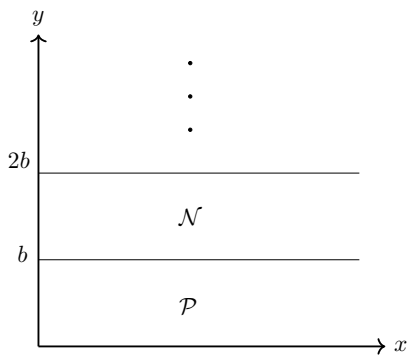
\begin{figure}[htbp!]
\begin{center}
%\hspace{-3.2 cm}
\resizebox{5.3 cm}{!}{
\begin{tikzpicture}[scale=1.3]
\draw[->][thick] (0,0)--(4,0) node[right]{$x$};
\draw[->][thick] (0,0)--(0,3.6) node[above]{$y$};
\put (-9,36) {$b$};
\put (-13,76) {$2b$};
\put (60,14) {$\P$};
\put (60,52) {$\N$};
\draw (0,1)--(3.7,1);
\draw (0,2)--(3.7,2);
%\draw[dotted] (1.6,2.1)--(1.6,2.3);
\draw[decorate sep={0.5mm}{5mm},fill] (1.75,2.5)--(1.75,3.3);
\end{tikzpicture}}
\end{center}
%\vspace{-3cm}
\caption{The subtraction set $S=\{(0, b)\}$ generates parallel horizontal  strips of width $b$.
%\ur{The arrows need to be outside the other lines.} \In{I changed now.}
}
\label{fig:horizontal}
\end{figure}
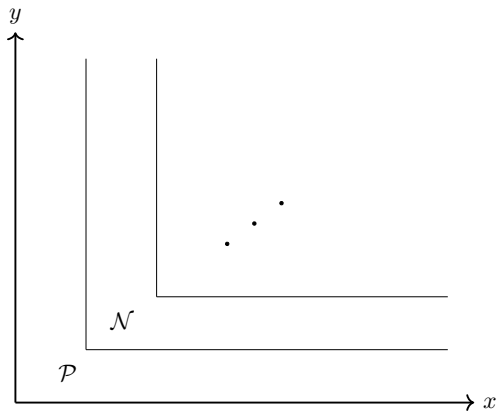
\begin{figure}[htbp!]
\begin{center}
%\hspace{-3.5 cm}
\resizebox{6.8 cm}{!}{
\begin{tikzpicture}[scale=1.3]
%\draw[dashed] (-3.0,-3.0);
\draw[thick, ->] (0,0)--(5.2,0) node[right]{$x$};
\draw[thick, ->] (0,0)--(0,4.2) node[above]{$y$};
\draw (0.8,0.6)--(0.8,3.9);
\draw (0.8,0.6)--(4.9,0.6);
\put (18, 9) {$\P$};
\draw (1.6,1.2)--(1.6,3.9);
\draw (1.6,1.2)--(4.9,1.2);
\put (40,31) {$\N$};
%\draw[dotted] (1.3,1.2)--(1.5,1.4);
\draw[decorate sep={0.5mm}{5mm},fill]  (2.4,1.8)--(3.2,2.4); %(1.35, 1.25);
\end{tikzpicture}}
\end{center}
%\vspace{-3cm}
\caption{The subtraction set $S=\{(a, b)\}$, $a,b>0$, generates $\mathrm L$-shaped outcome sets. The first $b$ rows and $a$ columns are all $\P$-positions. The following shifted $b$ rows and $a$ columns are all $\N$-positions. The next are all \P-position, and so on.
}\label{fig:Lshape}
\end{figure}

The \P-positions of {\sc two-move} can be learned from {\sc one-move} in a linear fashion. By symmetry it suffices to study, say the upper `half' of the game board, separated by a line of slope $\frac{b+d}{a+c}$.

\begin{defi}[Slope]
Consider $S=\{(a,b),(c,d)\}$. Then $\delta=\delta(S)=\frac{b+d}{a+c}$. 
\end{defi}

We can have at most two 0s in the set $\{a,b,c,d\}$. By symmetry, we may assume that, if two elements equal $0$ then either $a=c=0$ or $a=d=0$. These are cases with either one-dimensional derivations (parallel horizontal strips or rectangular checkerboard patterns). If there is exactly one $0$ in the set $\{a,b,c,d\}$, then we assume that $a=0$ (or later in Theorem~\ref{thm:2m2dim} that $b=0$ or $d=0$). For examples, %$S=\{(0,1),(2,4)\}$,  $S=\{(0,1),(4,2)\}$ or  $S=\{(0,4),(2,1)\}$; 
see Figure~\ref{fig:one0a}.%~and~\ref{fig:one0b}.  

\begin{figure}[htbp!]
\begin{center}
\includegraphics[width=3.3 cm]{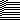}\hspace{7 mm}
\includegraphics[width=3.3 cm]{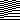}\hspace{7 mm}
\includegraphics[width=3.3 cm]{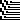}
\end{center}
%\vspace{-3cm}
\caption{These are the first 20 by 20 outcomes of the rulesets $S=\{(0,1),(2,4)\}$,  $S=\{(0,1),(4,2)\}$ and $S=\{(0,4),(2,1)\}$,  respectively. 
}\label{fig:one0a}
\end{figure}

Let us define three ruleset depending sets of positions, where we assume that $a\le c$. This is no restriction since rulesets are unordered by definition. Let $A=\{(x,y)\mid 0\le x<a, y\ge \delta x\}$, $B=\{(x,y)\mid a\le x<c, y\ge \delta x\}$ and $C=\{(x,y)\mid c\le x<c+a, y\ge \delta x\}$. See also Figure~\ref{fig:cuttingprinciple}. 

\begin{figure}[htbp!]
\begin{center}
%\hspace{-3.2 cm}
\begin{tikzpicture}[scale=1.3]
%\draw[dashed] (-3.0,-3.0);
\draw[->] (0,0)--(6,0) node[right]{$x$};
\draw[->] (0,0)--(0,6.1) node[above]{$y$};
\draw[dashed] (0.3,0.2)--(0.3,6);
  %(0.4,0.4)--(0.4,6)node[put]{$\P$};
\draw[thick] (1.2,0.8)--(1.2,6);
\draw[dashed] (1.5,1)--(1.5,6);
\draw[thick] (0,0)--(1.5,1);
\draw[dotted] (1.5,1)--(3.6,2.4);
\put (1,120) {$A$};
\put (22,120) {$B$};
\put (46,120) {$C$};
\put (67,120) {$D$};
\put (11,-10) {$a$}
\put (40,-10) {$c$}
\put (59,30) {$(a+c,b+d)$}
\end{tikzpicture}
\end{center}
%\vspace{-3cm}
\caption{A vertical {\em cut} at $c$ lets us treat the $A$ and $B$ regions as  {\sc one-move} $\{(a,b)\}$. The outcome regions $A, B$ and $C$ are calculated in Lemma~\ref{lem:AB} and Lemma~\ref{lem:C}, and region $D$ is a linear consequence of $A, B$ and $C$; see Theorem~\ref{thm:2m2dim}. 
}\label{fig:cuttingprinciple}
\end{figure}
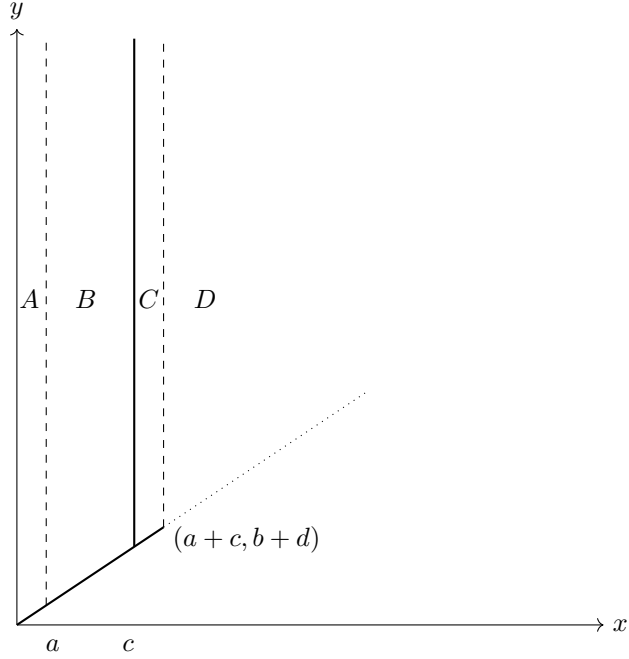
We invite the reader to identify the regions $A$, $B$ and $C$ in Figure~\ref{fig:dilbox}. The regions appear also in Figure~\ref{fig:one0a}, but, because  in those cases $a=0$, then  $A=C=\varnothing$.  Note also that if $a=c$, then $B=\varnothing$.

\begin{lem}[Regions $A$ and $B$]\label{lem:AB}
Suppose $c>0$. The outcomes of regions $A$ and $B$ are as in Lemma~\ref{lem:onemovestruct}.
\end{lem}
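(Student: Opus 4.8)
The plan is to exploit a \emph{cut} at the vertical line $x=c$, as suggested by Figure~\ref{fig:cuttingprinciple}: I will argue that every position with first coordinate strictly less than $c$ behaves, in the two-move game $S=\{(a,b),(c,d)\}$, exactly as it does in the one-move game $\{(a,b)\}$. Since $a\le c$ gives $A\cup B\subseteq \{(x,y)\mid 0\le x<c\}$, this immediately yields the claim for regions $A$ and $B$ via Lemma~\ref{lem:onemovestruct}.

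The key observation is that the region $\{x<c\}$ is closed under play and isolates the move $(a,b)$. First, from any position $(x,y)$ with $x<c$ the move $(c,d)$ is never available, because it would require $x\ge c$. Second, the move $(a,b)$ keeps us inside the region: if $x<c$ then $x-a\le x<c$. Hence the entire game tree rooted at a position with $x<c$ uses only the move $(a,b)$ and never leaves $\{x<c\}$.

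I would then formalize this by induction on the (well-founded) move order of the game, writing $o_1$ for the $\{(a,b)\}$-outcome and $o_2$ for the $\{(a,b),(c,d)\}$-outcome, and proving $o_1(x,y)=o_2(x,y)$ for all $(x,y)$ with $x<c$. For the base case, a position with $x<c$ is terminal under $S$ if and only if $(a,b)$ is inapplicable (since $(c,d)$ is already inapplicable), which is precisely the terminal condition for $\{(a,b)\}$; both outcomes are then $\P$. For the inductive step, a non-terminal such position has the single option $(x-a,y-b)$ in both games, and $x-a<c$, so the induction hypothesis gives $o_2(x-a,y-b)=o_1(x-a,y-b)$; since in each game the outcome is $\N$ exactly when the unique option is $\P$, we conclude $o_2(x,y)=o_1(x,y)$. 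Applying Lemma~\ref{lem:onemovestruct} to $o_1$ then describes the $\P$-positions in $A\cup B$ via the L-shapes $L_n$.

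There is essentially no serious obstacle here; the argument is a clean ``subgame isolation''. The only points that need care are (i) verifying that the available move cannot escape the region $\{x<c\}$ and that $(c,d)$ is genuinely inaccessible there --- both of which use $a\le c$ and the definition of a valid move --- and (ii) noting that the hypothesis $c>0$ guarantees the region, and hence $A\cup B$, is where the argument has content. The restriction $y\ge \delta x$ in the definitions of $A$ and $B$ plays no role in this lemma; the equivalence in fact holds for all positions with $x<c$, and the slope line is only relevant for the later assembly of region $D$ in Theorem~\ref{thm:2m2dim}.
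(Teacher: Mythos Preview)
Your proposal is correct and follows exactly the paper's approach: the paper's proof is the one-liner ``Since $x<c$, then {\sc one-move} $\{(a,b)\}$ determines the outcomes,'' and your argument is precisely a careful unpacking of that sentence via subgame isolation and induction. No differences worth noting beyond level of detail.
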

\begin{proof}
    Since $x<c$, then {\sc one-move} $\{(a,b)\}$  determines the outcomes.
\end{proof}

\begin{rem}
\label{rem:two_move_a,c=0}
If in Lemma~\ref{lem:AB}, $a=c=0$, then the statement is empty, and we will rely instead on the symmetric case to compute all the outcomes. Indeed $a=c=0$ forces both $b,d>0$, and the symmetric case of $\delta=\infty$ is slope $0$. Our method does not provide any computation if $\delta=\infty$ but it computes all outcomes if $\delta=0$. 
\end{rem}

Next, we determine the outcomes in region $C$. 

In case $a=0$, then $C=\varnothing$, so no computation is required.  Thus every case with a non-trivial $C$ region is covered in the following result. The intuition is given in Figure~\ref{fig:delta}.

\begin{lem}[Region $C$]\label{lem:C}
    %In Figure~\ref{fig:cuttingprinciple}, 
      Suppose that $0<a\le c$. Consider $(x,y)\in C$. Then $(x,y)\in \P$ if and only if $y<d$ and $2na\le x< (2n+1)a$, for some $n\in \nz$.
\end{lem}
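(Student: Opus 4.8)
The plan is to exploit the fact, already underlying Lemma~\ref{lem:AB}, that every position whose first coordinate is strictly less than $c$ is governed entirely by the single move $(a,b)$: the move $(c,d)$ requires $x\ge c$, and subtracting $(a,b)$ never increases the first coordinate, so from such a position the move $(c,d)$ is never reachable. Hence, by Lemma~\ref{lem:onemovestruct}, a position $(x',y')$ with $x'<c$ lies in $\P$ if and only if $\min(\lfloor x'/a\rfloor,\lfloor y'/b\rfloor)$ is even (reading the second entry as $+\infty$ when $b=0$); this is a routine reformulation of the $L_n$ description. As a first step I would record the special case: the whole strip $0\le x'<a$ is entirely $\P$, since there $\lfloor x'/a\rfloor=0$.

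Next I would split region $C$ according to availability of the move $(c,d)$. If $y\ge d$, then $(x,y)\to(x-c,y-d)$ is legal and, since $0\le x-c<a$, lands at a $\P$-position by Step~1; therefore $(x,y)\in\N$. This establishes that $\x\in\P$ forces $y<d$.

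The heart of the argument is the complementary case $y<d$. Here I would first show that region $C$ contains no terminal position: a terminal position in $C$ needs $y<b$ and $y<d$, and since $y\ge\delta x\ge\delta c$ this forces $\delta c<\min(b,d)$, i.e.\ both $cd<ab$ and $cb<ad$; multiplying these gives $c^2<a^2$, contradicting $a\le c$. Consequently, when $y<d$ the move $(a,b)$ must be available and is the unique option, so $(x,y)\in\P$ iff $(x-a,y-b)\in\N$. As $x-a<c$, Step~1 applies to the target. The decisive point is that the minimum in the parity formula falls on the first coordinate: from $y\ge\delta x=\tfrac{(b+d)x}{a+c}$ one gets $\tfrac yb>\tfrac xa$ precisely because $y<d$ forces $\delta c<d$, i.e.\ $ad>bc$; hence $\lfloor (y-b)/b\rfloor\ge\lfloor (x-a)/a\rfloor=\lfloor x/a\rfloor-1$. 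Thus the target is in $\P$ iff $\lfloor x/a\rfloor-1$ is even, so $(x,y)\in\P$ iff $\lfloor x/a\rfloor$ is even, i.e.\ $2na\le x<(2n+1)a$ for some $n\in\nz$. Combining the two cases yields the stated equivalence.

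The main obstacle I anticipate is exactly this last comparison --- verifying that the minimum is always attained at the $x$-coordinate --- together with cleanly handling the degenerate case $b=0$ (where the parity rule uses only $\lfloor x/a\rfloor$) and confirming that $y<d$ with $a\le c$ indeed forces $y\ge b$, so that $(a,b)$ is genuinely playable. All three points rest on the single inequality $ad>bc$, which is equivalent to the low part $\{y<d\}$ of region $C$ being nonempty; I would therefore isolate $ad>bc$ as a one-line preparatory observation and reuse it throughout.
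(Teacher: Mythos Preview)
Your proposal is correct and follows essentially the same approach as the paper: split on whether $y\ge d$, use the move $(c,d)$ to reach the terminal strip $0\le x'<a$ when it is available, and fall back on the one-move $\{(a,b)\}$ structure when it is not. The paper's proof is extremely terse (``Lemma~\ref{lem:onemovestruct} still applies''), whereas you make explicit the point it leaves to the reader---namely that the existence of a point in $C$ with $y<d$ forces $ad>bc$, hence $\delta>b/a$, so the minimum in the parity description of $\bigcup L_n$ is attained at the $x$-coordinate and the outcome depends only on $\lfloor x/a\rfloor$; this is exactly the content the paper defers and then partially revisits in Corollary~\ref{cor:C_region_noP}.
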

\begin{proof}
If $y<d$, then Lemma~\ref{lem:onemovestruct} (a) still applies. Otherwise, by definition of the $C$ region, the move $(c,d)$ leads to a position in $A$, which is non-empty since $a>0$. Those are all \P-positions, since $c>0$. 
    
\end{proof}
Note that $d/c\le \delta$ (which in particular means that the move $(c,d)$ is valid in all of the $C$ region) if and only if $ad\le bc$. 

\begin{cor}[Region $C$ no \P-position]
\label{cor:C_region_noP}
Suppose $0<a\le c$. The region $C$ contains no \P-position if %and only if 
$ad\le bc$ or $(2n+1)a\le c<(2n+2)a$ for some $n\in \nz$.
\end{cor}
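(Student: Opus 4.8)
The plan is to read both hypotheses directly off Lemma~\ref{lem:C}. That lemma says that a point $(x,y)\in C$ can be a \P-position only if $y<d$ and $x$ lies in an \emph{even block}, i.e. $2na\le x<(2n+1)a$ for some $n\in\nz$. So it suffices to show that under each of the two stated hypotheses no $(x,y)\in C$ can satisfy both of these requirements at once. Thus the whole argument is a matter of ruling out the coexistence of ``$y<d$'' and ``$x$ in an even block'' inside the strip $c\le x<c+a$, $y\ge\delta x$.

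First I would dispose of the hypothesis $ad\le bc$. By the observation following Lemma~\ref{lem:C}, $ad\le bc$ is equivalent to $d/c\le\delta$, that is $\delta c\ge d$. Every $(x,y)\in C$ has $x\ge c$ and, by definition of $C$, $y\ge\delta x$; since $\delta\ge 0$ this gives $y\ge\delta x\ge\delta c\ge d$. Hence $y<d$ is impossible and Lemma~\ref{lem:C} leaves no \P-position in $C$. This half is immediate and uses only the slope comparison at the leftmost column $x=c$.

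The second hypothesis is a parity statement about where $c$ falls in the block structure: $(2n+1)a\le c<(2n+2)a$ says precisely that $c$ lies in an \emph{odd} block. The set of $x$-coordinates occurring in $C$ is the interval $[c,c+a)$, which has length $a$ and therefore begins inside that odd block. The part of $[c,c+a)$ lying in $[(2n+1)a,(2n+2)a)$ consists entirely of odd-block values and so contributes no \P-candidate by Lemma~\ref{lem:C}. In the boundary case $c=(2n+1)a$ the interval $[c,c+a)$ \emph{equals} the odd block $[(2n+1)a,(2n+2)a)$, no even-block $x$ occurs at all, and the conclusion is immediate.

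The delicate point---and what I expect to be the main obstacle---is the remaining tail $[(2n+2)a,\,c+a)$, which pokes into the next even block whenever $c>(2n+1)a$. For such $x$ one has $x\ge(2n+2)a$, so the region constraint forces $y\ge\delta x\ge\delta(2n+2)a$, and the assertion that $C$ has no \P-position there reduces to checking that this lower bound already reaches $d$, i.e. $\delta(2n+2)a\ge d$, which would again contradict $y<d$. Establishing such a slope bound on the tail is the crux: it is exactly the place where the interplay between the position of $c$ and the parameter $d$ must be exploited, and I would expect to need either the full force of the hypotheses (noting that $ad\le bc$ is precisely the clean case where the bound holds already at $x=c$) or a careful argument showing the tail cannot drop below the line $y=d$ while staying in an even block. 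I would therefore concentrate my effort on making this tail estimate precise, since the two ``odd block'' and ``large slope'' regimes are what the two disjunctive conditions are meant to capture.
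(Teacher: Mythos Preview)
Your treatment of the first disjunct $ad\le bc$ is correct and matches the paper's argument. For the second disjunct the paper is brief: it observes that from $(2n+1)a\le c$ one cannot have $x$ in the block $[2na,(2n+1)a)$, and stops there---it never addresses the tail $[(2n+2)a,c+a)$ that you correctly isolated as the obstacle.

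Your instinct is sound, and the bound you are hunting for, $\delta(2n+2)a\ge d$, is in fact \emph{not derivable} from the hypothesis $(2n+1)a\le c<(2n+2)a$ alone. Take $(a,b,c,d)=(2,1,3,10)$: then $n=0$ gives $2\le 3<4$, so the second hypothesis holds, while $ad=20>3=bc$, so the first fails. Here $\delta=\tfrac{11}{5}$ and the tail is $\{4\}$; since $\delta\cdot 4=\tfrac{44}{5}<10=d$, the point $(4,9)\in C$ has $y<d$ with $x$ in the even block $[4,6)$, and Lemma~\ref{lem:C} declares it a \P-position. Direct play confirms this: the unique option of $(4,9)$ is $(2,8)$, whose unique option is the terminal $(0,7)\in\P$, so $(2,8)\in\N$ and $(4,9)\in\P$. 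Thus the gap you located is genuine and cannot be closed; the second disjunct as written is only clean when $c=(2n+1)a$ (tail empty) or when paired with a slope hypothesis such as $ad\le bc$.
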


% New Proof
\begin{proof}
  Observe that if $ad\le bc$  then it is impossible to have $y< d$. Because  in the region $C$,  $x \ge c$,  $d/c \le \delta$ implies $y \ge x d/c\ge d$. Also, if $(2n+1)a\le c<(2n+2)a$ then $2na\le x<(2n+1)a$ cannot hold. Therefore, using Lemma~\ref{lem:C},  the region $C \subset \N$.
\end{proof}

\begin{defi}[Mirror Ruleset]
    Let $S=\{(a,b),(c,d)\}$. Then the mirror ruleset is $S^{-1}=\{(b,a),(d,c)\}$.
\end{defi}

For example, the rulesets in Figure~\ref{fig:one0b} are the mirrors of those in Figure~\ref{fig:one0a}.
\begin{figure}[htbp!]
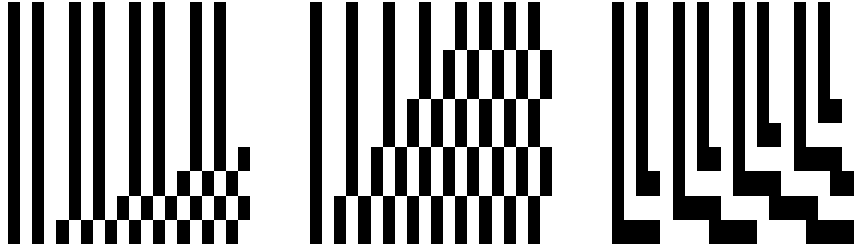

\begin{center}
\scalebox{1}[-1]{\rotatebox[origin=c]{-90}{\includegraphics[width=3.2cm]{012420by20.png}}}\hspace{7 mm}
\scalebox{1}[-1]{\rotatebox[origin=c]{-90}{\includegraphics[width=3.2cm]{014220by20.png}}}\hspace{7 mm}
\scalebox{1}[-1]{\rotatebox[origin=c]{-90}{\includegraphics[width=3.2cm]{210420by20.png}}}
\end{center}
%\vspace{-3cm}
\caption{These are the mirror rulesets of Figure~\ref{fig:one0a}; the first 20 by 20 outcomes of the rulesets $S=\{(1,0),(4,2)\}$,  $S=\{(1,0),(2,4)\}$ and $S=\{(1,2), (4,0)\}$,  respectively. 
}\label{fig:one0b}
\end{figure}  

The mirror rulesets gets the analogous sets $A^{-1}=\{(x,y)\mid 0\le x<b, y\le \delta x\}$, $B^{-1}=\{(x,y)\mid b\le x<d, y\le \delta x\}$ and $C^{-1}=\{(x,y)\mid d\le x<d+b, y\le \delta x\}$. 
 Let $\Gamma = A\cup B\cup C$ and let $\Gamma^{-1}=A^{-1}\cup B^{-1}\cup C^{-1}$. 
We conclude this section, by collecting its parts and fusing them into a main result. 
\begin{thm}[{\sc two-move}]\label{thm:2m2dim}
The position $(x,y)\in \P$ if and only if  $(x,y)\in T^k((\Gamma\cup \Gamma^{-1})\cap \P)$, for some $k$, %if $y\ge \delta x$, or
    %\item $(x,y)\in T^k((A^{-1}\cup B^{-1}\cup C^{-1})\cap \P)$, for some $k$, if $y\le \delta x$, 
where 
\begin{itemize}
\item $(x,y)\in \Gamma $ is a \P-position if and only if $2na\le x< (2n+1)a$, for some $n\in \nz$ and  $(x,y)\not\ge (c,d)$. 
%\item $(x,y)\in A\cup B$ is a \P-position if and only if $2ka\le x< (2k+1)a$, for some $k\in \nz$. And $(x,y)\in C$ is a \P-position if and only if $y<d$ and $2ka\le x< (2k+1)a$, for some $k\in \nz$. 
\item if $b\le d$,  $(x,y)\in  \Gamma^{-1}$ is a \P-position if and only if $2nb\le x< (2n+1)b$, for some $n\in \nz$, and $(x,y)\not\ge (c,d)$. 
\item if $d<b$, $(x,y)\in  \Gamma^{-1}$ is a \P-position if and only if $2nd\le x< (2n+1)d$, for some $n\in \nz$, and $(x,y)\not\ge (a,b)$. 

%$(x,y)\in A^{-1}\cup B^{-1}$ is a \P-position if and only if $2kd\le x< (2k+1)d$, for some $k\in \nz$. And $(x,y)\in C^{-1}$ is a \P-position if and only if $y<a$ and $2kd\le x< (2k+1)d$, for some $k\in \nz$. 
\end{itemize}
\end{thm}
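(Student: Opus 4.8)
The plan is to leverage the \P-to-\P\ principle, which by Observation~\ref{obs:T} makes the \P-set invariant under the translation $T$: for every $k\in\nz$ a position lies in \P\ if and only if its image under $T^{k}$ does. Consequently the theorem reduces to two essentially independent tasks: (i) showing that $\Gamma\cup\Gamma^{-1}$ is a \emph{fundamental domain} for $T$ acting on $\B^2$, in the sense that every position is carried into $\Gamma\cup\Gamma^{-1}$ by some power $T^{-k}$; and (ii) computing $\P\cap(\Gamma\cup\Gamma^{-1})$ explicitly. Granting (i) and (ii), the ``if'' direction is immediate from Observation~\ref{obs:T}, while the ``only if'' direction follows by reducing an arbitrary \P-position along the $-(a+c,b+d)$ direction until it lands in the fundamental domain, where its outcome is read off from the three bullets.

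For task (i) I would argue geometrically. Subtracting $(a+c,b+d)$ from a position as many times as possible while remaining in $\B^2$ lands it in the non-reducible ``L-shaped'' region $R=\{(x,y): x<a+c\}\cup\{(x,y): y<b+d\}$. It then suffices to check that the line $y=\delta x$ splits $R$ into its part above the line, which is exactly $\Gamma$, and its part below the line, which is exactly $\Gamma^{-1}$. The key identity is $\delta(a+c)=b+d$: it guarantees that the two arms of $R$ do not protrude across the dividing line, since any point with $x\ge a+c$ and $y\ge\delta x$ automatically satisfies $y\ge b+d$, and symmetrically for the horizontal arm. This is the short computation that pins down the fundamental domain and simultaneously shows that the translates $T^{k}(\Gamma)$ and $T^{k}(\Gamma^{-1})$ cover the first quadrant.

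For task (ii) above the line I would assemble the region lemmas already proved. On $A\cup B$ (where $x<c$) the outcomes coincide with those of {\sc one-move} $\{(a,b)\}$ by Lemma~\ref{lem:AB}, so Lemma~\ref{lem:onemovestruct} supplies the L-shape \P-pattern and the constraint $(x,y)\not\ge(c,d)$ is vacuous there; on $C$ the second move $(c,d)$ first becomes available, and Lemma~\ref{lem:C} together with Corollary~\ref{cor:C_region_noP} converts this into the stated exclusion, giving the first bullet. Below the line I would invoke the reflection $\sigma\colon(x,y)\mapsto(y,x)$, under which $S\mapsto S^{-1}$ and $\P(S)\leftrightarrow\P(S^{-1})$; applying the already-established $\Gamma$-analysis to $S^{-1}$ and reflecting back yields the two mirror bullets. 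The case distinction $b\le d$ versus $d<b$ merely records which move of $S^{-1}=\{(b,a),(d,c)\}$ has the smaller first coordinate, so that the standing hypothesis ``$a\le c$'' holds for the mirror and the $\Gamma$-analysis applies verbatim; this is precisely why the roles of $(c,d)$ and $(a,b)$, and of $b$ and $d$, interchange between the two cases.

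The hardest part, I expect, is the \emph{interface} along $y=\delta x$, where $\Gamma$ and $\Gamma^{-1}$ abut and, depending on the sign of $(a+c)-(b+d)$, their $T$-translates may overlap. One must verify that the direct above-line description and the reflected below-line description agree on positions lying on or immediately adjacent to the line, and that the corner exclusions $(x,y)\not\ge(c,d)$ and $(x,y)\not\ge(a,b)$ arising from region $C$ and its mirror are mutually consistent, neither omitting nor double-counting the lattice points nearest $(c,d)$ and $(a,b)$. Since Observation~\ref{obs:T} guarantees that every $T$-orbit carries a single well-defined outcome, any such overlap is ultimately harmless once this boundary bookkeeping is carried out; nevertheless it is the one place where the case analysis must genuinely be checked rather than quoted from the earlier lemmas.
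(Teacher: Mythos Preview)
Your proposal is correct and follows the same approach as the paper, whose proof is literally the one-line instruction ``Combine Lemma~\ref{lem:PtoP}, Lemma~\ref{lem:onemovestruct} and Observation~\ref{obs:T} with Lemmas~\ref{lem:AB} and~\ref{lem:C}.'' Your outline is thus considerably more explicit than the original---in particular, the fundamental-domain argument and the interface check along $y=\delta x$ are details the paper leaves entirely implicit rather than treats as a genuine difficulty.
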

 \begin{proof}
 Combine Lemma~\ref{lem:PtoP}, Lemma~\ref{lem:onemovestruct} and Observation~\ref{obs:T} with Lemmas~\ref{lem:AB} and~\ref{lem:C}.  
 \end{proof}
 %%%%%%%%%%%%%%%The ruleset {(4,1) (9,10)}. First 40 by 40 positions%% c=2a+1, ad>bc.       
\begin{figure}[htbp!]
\begin{center}
\begin{tikzpicture}[scale = 0.2]
 \foreach \x/\y in {9/9,10/9,9/8}
 \filldraw[color = red] (\x, \y) rectangle (\x+1, \y+1);% 
%\node at (14.95,14.95) {\includegraphics[width=256pt]{23_31_12__30.png}};
\colorlet{yellow}{gray!60!yellow}
\draw[step=1 cm,yellow] (0, 0) grid (40,40); 
%%%%%%%%%%%%%%%%%%
 \foreach \x/\y in 
{0/0, 0/1, 0/2, 0/3, 0/4, 0/5, 0/6, 0/7, 0/8, 0/9, 0/10, 0/11, 0/12, 0/13, 0/14, 0/15, 0/16, 0/17, 0/18, 0/19, 0/20, 0/21, 0/22, 0/23, 0/24, 0/25, 0/26, 0/27, 0/28, 0/29, 0/30, 0/31, 0/32, 0/33, 0/34, 0/35, 0/36, 0/37, 0/38, 0/39, 1/0, 1/1, 1/2, 1/3, 1/4, 1/5, 1/6, 1/7, 1/8, 1/9, 1/10, 1/11, 1/12, 1/13, 1/14, 1/15, 1/16, 1/17, 1/18, 1/19, 1/20, 1/21, 1/22, 1/23, 1/24, 1/25, 1/26, 1/27, 1/28, 1/29, 1/30, 1/31, 1/32, 1/33, 1/34, 1/35, 1/36, 1/37, 1/38, 1/39, 2/0, 2/1, 2/2, 2/3, 2/4, 2/5, 2/6, 2/7, 2/8, 2/9, 2/10, 2/11, 2/12, 2/13, 2/14, 2/15, 2/16, 2/17, 2/18, 2/19, 2/20, 2/21, 2/22, 2/23, 2/24, 2/25, 2/26, 2/27, 2/28, 2/29, 2/30, 2/31, 2/32, 2/33, 2/34, 2/35, 2/36, 2/37, 2/38, 2/39, 3/0, 3/1, 3/2, 3/3, 3/4, 3/5, 3/6, 3/7, 3/8, 3/9, 3/10, 3/11, 3/12, 3/13, 3/14, 3/15, 3/16, 3/17, 3/18, 3/19, 3/20, 3/21, 3/22, 3/23, 3/24, 3/25, 3/26, 3/27, 3/28, 3/29, 3/30, 3/31, 3/32, 3/33, 3/34, 3/35, 3/36, 3/37, 3/38, 3/39, 4/0, 5/0, 6/0, 7/0, 8/0, 8/2, 8/3, 8/4, 8/5, 8/6, 8/7, 8/8, 8/9, 8/10, 8/11, 8/12, 8/13, 8/14, 8/15, 8/16, 8/17, 8/18, 8/19, 8/20, 8/21, 8/22, 8/23, 8/24, 8/25, 8/26, 8/27, 8/28, 8/29, 8/30, 8/31, 8/32, 8/33, 8/34, 8/35, 8/36, 8/37, 8/38, 8/39, 9/0, 9/2, 9/3, 9/4, 9/5, 9/6, 9/7, 9/8, 9/9, 10/0, 10/2, 10/3, 10/4, 10/5, 10/6, 10/7, 10/8, 10/9, 11/0, 11/2, 11/3, 11/4, 11/5, 11/6, 11/7, 11/8, 11/9, 12/0, 12/2, 13/0, 13/2, 13/11, 13/12, 13/13, 13/14, 13/15, 13/16, 13/17, 13/18, 13/19, 13/20, 13/21, 13/22, 13/23, 13/24, 13/25, 13/26, 13/27, 13/28, 13/29, 13/30, 13/31, 13/32, 13/33, 13/34, 13/35, 13/36, 13/37, 13/38, 13/39, 14/0, 14/2, 14/11, 14/12, 14/13, 14/14, 14/15, 14/16, 14/17, 14/18, 14/19, 14/20, 14/21, 14/22, 14/23, 14/24, 14/25, 14/26, 14/27, 14/28, 14/29, 14/30, 14/31, 14/32, 14/33, 14/34, 14/35, 14/36, 14/37, 14/38, 14/39, 15/0, 15/2, 15/11, 15/12, 15/13, 15/14, 15/15, 15/16, 15/17, 15/18, 15/19, 15/20, 15/21, 15/22, 15/23, 15/24, 15/25, 15/26, 15/27, 15/28, 15/29, 15/30, 15/31, 15/32, 15/33, 15/34, 15/35, 15/36, 15/37, 15/38, 15/39, 16/0, 16/2, 16/4, 16/5, 16/6, 16/7, 16/8, 16/9, 16/11, 16/12, 16/13, 16/14, 16/15, 16/16, 16/17, 16/18, 16/19, 16/20, 16/21, 16/22, 16/23, 16/24, 16/25, 16/26, 16/27, 16/28, 16/29, 16/30, 16/31, 16/32, 16/33, 16/34, 16/35, 16/36, 16/37, 16/38, 16/39, 17/0, 17/2, 17/4, 17/5, 17/6, 17/7, 17/8, 17/9, 17/11, 18/0, 18/2, 18/4, 18/5, 18/6, 18/7, 18/8, 18/9, 18/11, 19/0, 19/2, 19/4, 19/5, 19/6, 19/7, 19/8, 19/9, 19/11, 20/0, 20/2, 20/4, 20/11, 21/0, 21/2, 21/4, 21/11, 21/13, 21/14, 21/15, 21/16, 21/17, 21/18, 21/19, 21/20, 21/21, 21/22, 21/23, 21/24, 21/25, 21/26, 21/27, 21/28, 21/29, 21/30, 21/31, 21/32, 21/33, 21/34, 21/35, 21/36, 21/37, 21/38, 21/39, 22/0, 22/2, 22/4, 22/11, 22/13, 22/14, 22/15, 22/16, 22/17, 22/18, 22/19, 22/20, 23/0, 23/2, 23/4, 23/11, 23/13, 23/14, 23/15, 23/16, 23/17, 23/18, 23/19, 23/20, 24/0, 24/2, 24/4, 24/6, 24/7, 24/8, 24/9, 24/11, 24/13, 24/14, 24/15, 24/16, 24/17, 24/18, 24/19, 24/20, 25/0, 25/2, 25/4, 25/6, 25/7, 25/8, 25/9, 25/11, 25/13, 26/0, 26/2, 26/4, 26/6, 26/7, 26/8, 26/9, 26/11, 26/13, 26/22, 26/23, 26/24, 26/25, 26/26, 26/27, 26/28, 26/29, 26/30, 26/31, 26/32, 26/33, 26/34, 26/35, 26/36, 26/37, 26/38, 26/39, 27/0, 27/2, 27/4, 27/6, 27/7, 27/8, 27/9, 27/11, 27/13, 27/22, 27/23, 27/24, 27/25, 27/26, 27/27, 27/28, 27/29, 27/30, 27/31, 27/32, 27/33, 27/34, 27/35, 27/36, 27/37, 27/38, 27/39, 28/0, 28/2, 28/4, 28/6, 28/11, 28/13, 28/22, 28/23, 28/24, 28/25, 28/26, 28/27, 28/28, 28/29, 28/30, 28/31, 28/32, 28/33, 28/34, 28/35, 28/36, 28/37, 28/38, 28/39, 29/0, 29/2, 29/4, 29/6, 29/11, 29/13, 29/15, 29/16, 29/17, 29/18, 29/19, 29/20, 29/22, 29/23, 29/24, 29/25, 29/26, 29/27, 29/28, 29/29, 29/30, 29/31, 29/32, 29/33, 29/34, 29/35, 29/36, 29/37, 29/38, 29/39, 30/0, 30/2, 30/4, 30/6, 30/11, 30/13, 30/15, 30/16, 30/17, 30/18, 30/19, 30/20, 30/22, 31/0, 31/2, 31/4, 31/6, 31/11, 31/13, 31/15, 31/16, 31/17, 31/18, 31/19, 31/20, 31/22, 32/0, 32/2, 32/4, 32/6, 32/8, 32/9, 32/11, 32/13, 32/15, 32/16, 32/17, 32/18, 32/19, 32/20, 32/22, 33/0, 33/2, 33/4, 33/6, 33/8, 33/9, 33/11, 33/13, 33/15, 33/22, 34/0, 34/2, 34/4, 34/6, 34/8, 34/9, 34/11, 34/13, 34/15, 34/22, 34/24, 34/25, 34/26, 34/27, 34/28, 34/29, 34/30, 34/31, 34/32, 34/33, 34/34, 34/35, 34/36, 34/37, 34/38, 34/39, 35/0, 35/2, 35/4, 35/6, 35/8, 35/9, 35/11, 35/13, 35/15, 35/22, 35/24, 35/25, 35/26, 35/27, 35/28, 35/29, 35/30, 35/31, 36/0, 36/2, 36/4, 36/6, 36/8, 36/11, 36/13, 36/15, 36/22, 36/24, 36/25, 36/26, 36/27, 36/28, 36/29, 36/30, 36/31, 37/0, 37/2, 37/4, 37/6, 37/8, 37/11, 37/13, 37/15, 37/17, 37/18, 37/19, 37/20, 37/22, 37/24, 37/25, 37/26, 37/27, 37/28, 37/29, 37/30, 37/31, 38/0, 38/2, 38/4, 38/6, 38/8, 38/11, 38/13, 38/15, 38/17, 38/18, 38/19, 38/20, 38/22, 38/24, 39/0, 39/2, 39/4, 39/6, 39/8, 39/11, 39/13, 39/15, 39/17, 39/18, 39/19, 39/20, 39/22, 39/24, 39/33, 39/34, 39/35, 39/36, 39/37, 39/38, 39/39}
\filldraw[color = black, opacity =.35] (\x, \y) rectangle (\x+1, \y+1);% 
\draw[thin] (0,0) -- (13,11);
\node at (11,20)[]{$C$};
\node at (9.5,-1)[]{$c$};
\node at (-1,10.5)[]{$d$};
\node at (4.5,-1)[]{$a$};
\end{tikzpicture}
\end{center}
\caption{The ruleset $S=\{(4,1), (9,10)\}$. This ruleset satisfies $bc<ad$ and $c=2a+1$. Hence there are \P-positions in the $C$-region as defined in Figure~\ref{fig:cuttingprinciple}. Namely $\{(x,y)\mid y/x\ge \delta, y<d\}=\{(8,9),(9,9),(10,9)\}$.}
\label{fig:delta}
\end{figure}
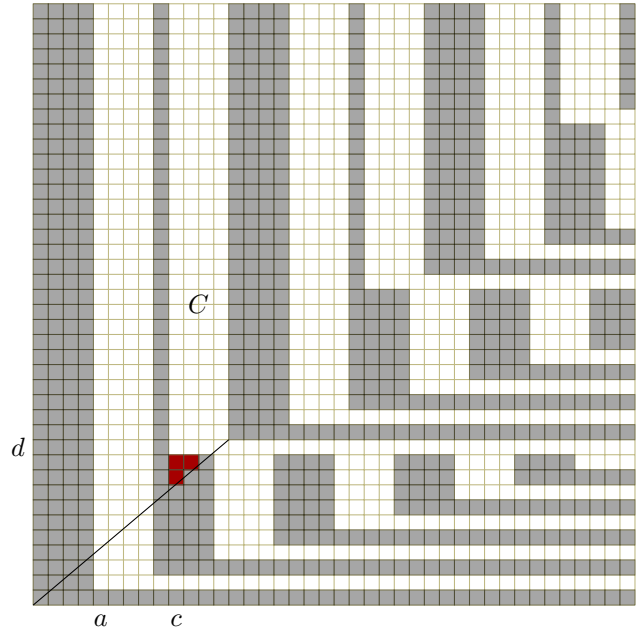

\begin{figure}[htbp!]
\begin{center}
\includegraphics[width=8cm]{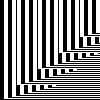}
\end{center}
%\vspace{-3cm}
\caption{The ruleset $S=\{(4,1),(17,15)\}$ corresponds to the case $c=4a+1$, and otherwise the idea is the same as in Figure~\ref{fig:delta}.} %\ur{we probablly do not need this one.}}\label{fig:411715} \In{Not required. The next figure is better.}
\label{fig:411715}
\end{figure}

\section{two-move vector subtraction}\label{sec:anydimension}%\label{sec:onemoveanydimension}
This section elaborates on the natural generalization of {\sc two-move}, from the previous section, to any dimension. This study focuses on the \P/\N-decision problem of a generic game position. Recall the `\P-to-\P\ principle' for {\sc two-move vector subtraction}, Lemma~\ref{lem:PtoP}:
\begin{lem*}[General \P-to-\P]
%\label{lem:PtoPd}
Consider  {\sc two-move vector subtraction}  $S= \{ \s_1, \s_2\}$. Then  $\x \in \P$  if and only if $\x+\s_1 +\s_2\in \P$. 
\end{lem*}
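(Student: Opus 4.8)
The plan is to prove the biconditional directly from the recursive definition of outcomes, using the two basic facts that a position lies in $\P$ exactly when all of its legal options lie in $\N$, and lies in $\N$ exactly when at least one legal option lies in $\P$. The pivotal structural observation is that, by invariance of the ruleset, the two options of $\x+\s_1+\s_2$ are \emph{always} legal and are precisely $\x+\s_2$ (subtracting $\s_1$) and $\x+\s_1$ (subtracting $\s_2$); no nonnegativity obstruction can arise, since $\x+\s_1+\s_2-\s_i\ge\boldsymbol 0$ holds automatically. This is what lets me compare the outcome of $\x+\s_1+\s_2$ with the outcomes of the shifted positions $\x+\s_1,\x+\s_2$, and ultimately with $\x$ itself.

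For the forward direction I would argue as follows. Assume $\x\in\P$. Then $\x$ is a legal option of both $\x+\s_1$ and $\x+\s_2$ (via subtraction of $\s_1$, respectively $\s_2$), so each of these positions has a $\P$-option and hence lies in $\N$. Since $\{\x+\s_1,\x+\s_2\}$ is exactly the option set of $\x+\s_1+\s_2$, all options of $\x+\s_1+\s_2$ lie in $\N$, giving $\x+\s_1+\s_2\in\P$. This direction is short and requires no case split.

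The reverse direction is the substantive one. Assume $\x+\s_1+\s_2\in\P$, so both options $\x+\s_1$ and $\x+\s_2$ lie in $\N$; each therefore has a legal $\P$-option. For $\x+\s_1$ the candidate $\P$-options are $\x$ (subtract $\s_1$, always legal) and $\x+\s_1-\s_2$ (subtract $\s_2$, legal only if nonnegative); similarly for $\x+\s_2$. If either witness is $\x$ itself, then $\x\in\P$ and we are done. Otherwise neither $\x+\s_i$ uses $\x$ as its witness, so both cross terms are forced into $\P$: $\x+\s_1-\s_2\in\P$ and $\x+\s_2-\s_1\in\P$, each necessarily legal. I would then subtract back: from $\x+\s_1-\s_2\in\P$, the move $\s_1$ (when legal) reaches $\x-\s_2$ and forces $\x-\s_2\in\N$; symmetrically $\x-\s_1\in\N$. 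Thus in every case each legal option of $\x$ lies in $\N$, so $\x\in\P$.

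The main obstacle is purely the bookkeeping of the nonnegativity side conditions, which is where the dimension count is invisible but the componentwise order matters. One must check that whenever a back-subtraction such as $\x-\s_i$ fails to be nonnegative it simply is not an option of $\x$ (so it imposes no constraint), while whenever it is nonnegative the preceding $\P$-membership forces it into $\N$; the two cases dovetail so that $\x$ has no $\P$-option either way. I would stress that invariance of {\sc vector subtraction}---legality depends only on $\x-\s\ge\boldsymbol 0$ and not otherwise on $\x$---is exactly what guarantees the clean option sets used throughout, and that nothing in the argument refers to $d$ beyond the vector order, so the one- and two-dimensional statements specialize immediately.
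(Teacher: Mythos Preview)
Your proof is correct and follows essentially the same approach as the paper's: both directions hinge on the observation that the option set of $\x+\s_1+\s_2$ is exactly $\{\x+\s_1,\x+\s_2\}$, and the reverse direction proceeds by the same case split on whether the cross terms $\x+\s_1-\s_2$, $\x+\s_2-\s_1$ serve as the $\P$-witnesses. Your treatment is in fact slightly more careful than the paper's in tracking the nonnegativity side conditions (the paper simply writes $\{\x-\s_1,\x-\s_2\}\subset\N$ without remarking that either coordinate may fail to be a legal position), but the logical structure is identical.
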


 \begin{rem}\label{rem:NtoN}
     By Lemma~\ref{lem:PtoP},   if $S= \{ \s_1, \s_2\}$, then $\x \in \N$  if and only if $\x+\s_1 +\s_2\in \N$. 
 \end{rem}

\begin{defi}[Translation Function]\label{def:trans}
    Consider a $d$-dimensional ruleset $S=\{\s_1 ,\s_2\}$ of size two. %, and a $(j,\alpha)$-cut. %For $k\in \nz$, let the $k^{\mathrm{th}}$ 
    Let the translation, of $\X\in \B$, be 
    $t(\x)=t_S(\x)=\x +\s_1+\s_2$.
\end{defi}
We will iterate this function: $t^k(\x)=t(t^{k-1}(\x))$, where $t^0(\x)=\x$. Thus, for all $k$, $t^k(\x) = \x +k(\s_1+\s_2)$. 
\begin{thm}[General {\sc two-move}]\label{thm:2mddim}
    Consider {\sc two-move vector subtraction} $S=\{\s_1,\s_2\}$. 
    Then $\x\in\P$ if and only if $\x-t^k(\x)\in\P$, where $k \in\nz$  is such that $\x-t^{k+1}(\x)\ngeq\boldsymbol 0$.
    
\end{thm}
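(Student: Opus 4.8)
The plan is to read the statement as an \emph{iterated reduction}. Writing $\boldsymbol c=\s_1+\s_2$, the position appearing in the theorem is the result of subtracting $\boldsymbol c$ from $\x$ exactly $k$ times, i.e. the \emph{base} position $\x-k\boldsymbol c$, where $k$ is chosen maximally so that this still lies on the board (the condition $\x-t^{k+1}(\x)\ngeq\boldsymbol 0$ encodes ``one more subtraction would leave $\B^d$''; the object $\x-t^k(\x)$ is to be read as the $k$-fold inverse translation $\x-k\boldsymbol c$, which is forced by that defining condition, since the literal value $-k(\s_1+\s_2)$ would be off the board). With this reading the theorem asserts that reduction to the base position preserves membership in $\P$, and it is nothing more than a finite iteration of the General \P-to-\P\ principle, Lemma~\ref{lem:PtoP}.

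First I would show that the index $k$ is well defined. Since $S\subseteq\B^d\setminus\{\boldsymbol 0\}$, both $\s_1$ and $\s_2$ are nonzero nonnegative vectors, so $\boldsymbol c=\s_1+\s_2\ge\boldsymbol 0$ with $\boldsymbol c\ne\boldsymbol 0$; fix a coordinate $i$ with $c_i>0$. The set $\{m\in\nz:\x-m\boldsymbol c\ge\boldsymbol 0\}$ contains $m=0$ and is bounded above, because the $i$-th coordinate of $\x-m\boldsymbol c$ equals $x_i-mc_i$, which is negative once $m>x_i/c_i$. Hence this set has a maximum $k$, giving $\x-k\boldsymbol c\ge\boldsymbol 0$ while $\x-(k+1)\boldsymbol c\ngeq\boldsymbol 0$, exactly as the hypothesis on $k$ requires. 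I would also record the monotonicity fact that $\x-j\boldsymbol c=(\x-k\boldsymbol c)+(k-j)\boldsymbol c\ge\boldsymbol 0$ for every $0\le j\le k$, so that all intermediate positions are legal board positions.

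Next I would run the reduction. For each $j$ with $0\le j<k$, both $\x-j\boldsymbol c$ and $\x-(j+1)\boldsymbol c$ lie on the board; applying Lemma~\ref{lem:PtoP} to the position $\x-(j+1)\boldsymbol c$, whose $(\s_1+\s_2)$-translate is $\x-j\boldsymbol c$, yields $\x-j\boldsymbol c\in\P$ if and only if $\x-(j+1)\boldsymbol c\in\P$. Chaining these $k$ equivalences gives $\x\in\P$ if and only if $\x-k\boldsymbol c\in\P$, which is the claim. Equivalently, this is the $k$-fold iterate of Lemma~\ref{lem:PtoP}, whose set-translation form in two dimensions is recorded in Observation~\ref{obs:T}.

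I do not expect a genuine obstacle: the content is entirely the bookkeeping above. The one point needing care is \emph{termination} --- guaranteeing that the reduction halts at a base position rather than descending forever --- and this is precisely where $\boldsymbol 0\notin S$ (hence $\boldsymbol c\ne\boldsymbol 0$) is used; were $\s_1=\s_2=\boldsymbol 0$ admissible, no maximal $k$ would exist. A secondary cosmetic point is to confirm that each application of Lemma~\ref{lem:PtoP} is legitimate, i.e. that $\x-(j+1)\boldsymbol c\ge\boldsymbol 0$ throughout, which is supplied by the monotonicity remark and the maximality of $k$.
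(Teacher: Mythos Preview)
Your proposal is correct and follows the same approach as the paper: the paper's proof is the single line ``This is immediate by Lemma~\ref{lem:PtoP},'' and your argument is precisely the unpacked version of that---iterate the \P-to-\P\ principle $k$ times after verifying that the maximal $k$ exists. Your explicit handling of the notational wrinkle (that $\x-t^k(\x)$ must be read as the $k$-fold inverse translation $\x-k(\s_1+\s_2)$ rather than the literal $-k(\s_1+\s_2)$) is a useful clarification the paper leaves implicit.
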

\begin{proof}
This is immediate by Lemma~\ref{lem:PtoP}.
\end{proof}
Thus, the complexity of the $\P$/$\N$ decision problem for a generic position $\x\in\B$ reduces to computing the outcome of its smallest  representative, say $\x'$ in $\B$ $\pmod {\s_1+\s_2}$. This, in turn reduces to  one  out of two situations:
\begin{itemize}
\item[(i)] $0\le \x'\ngeq \{\s_i,\s_{3-i}\}$, for both $i\in\{1,2\}$;
\item[(ii)] $0\le \x'-\s_i\ngeq \{\s_i,\s_{3-i}\}$, for some $i\in\{1,2\}$;
\item[(iii)] find a $k\in\nz$ such that $(k+1)\s_i\ngeq\x'\ge k\s_i$, $i\in\{1,2\}$. 
\end{itemize}
In case (i), $\x'$ is a terminal \P-position, and so, by Theorem~\ref{thm:2mddim}, $\x\in\P$. 
In case (ii), there is a move to a terminal \P-position; hence $\x'\in \N$, and thus, by Theorem~\ref{thm:2mddim}, $\x\in \N$. If (i) does not hold, then test if she-loves-me-she-loves-me-not according to (ii) returns a terminal \P-position for some $i\in \{1,2\}$. Denote this procedure by Algorithm~1. Observe that a response to the move $\s_i$ with the move $\s_{3-i}$ is impossible by the choice of $\x$ as the smallest representative in $\B$ $\pmod {\s_1+\s_2}$.

So, the complexity is in essence the same as determining the remainder after division, but here in $d$ dimensions. %(whichever coordinate will have the smallest divisor will determine the outcome). 
   The standard na\"ive algorithm for the \P/\N\ membership problem is exponential in succinct input size \cite{fraenkel2004complexity}. 
   %\In{I think we can include a reference here}.\ur{Fraenkel has a paper about complexity for impartial games.} \In{okay!} 
   As a corollary, we get an improvement for the case of {\sc two-move vector subtraction}. 
\begin{cor}
  Algorithm~1 for the \P/\N\   membership problem of {\sc two-move vector subtraction} is linear in the succinct input size.
\end{cor}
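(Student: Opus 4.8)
The plan is to spell out Algorithm~1 completely, check its correctness against the trichotomy~(i)--(iii) stated just before the corollary, and then bound the running time by counting arithmetic operations. The algorithm runs in two stages. First I would reduce the queried position $\x$ to its smallest representative $\x'$ in $\B$ modulo $\s_1+\s_2$: for each coordinate $i$ with $(\s_1+\s_2)_i>0$ I compute the quotient $\lfloor x_i/(\s_1+\s_2)_i\rfloor$, set $k$ to be the minimum of these quotients, and put $\x'=\x-k(\s_1+\s_2)$. By construction $\x'\ge\boldsymbol 0$ while $\x'-(\s_1+\s_2)\ngeq\boldsymbol 0$, so Theorem~\ref{thm:2mddim} (equivalently, iterating Lemma~\ref{lem:PtoP}) guarantees that the outcome of $\x$ equals the outcome of $\x'$. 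The second stage then decides the outcome of $\x'$ directly.

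For the second stage the key structural fact is that from $\x'$ no line of play can ever use \emph{both} moves: any descendant of $\x'$ obtained by subtracting each of $\s_1$ and $\s_2$ at least once would be $\le\x'-(\s_1+\s_2)$, contradicting the minimality of $\x'$. Hence each valid child $\x'-\s_i$ begins a pure one-move game with the single move $\s_i$, a she-loves-me-she-loves-me-not sequence whose outcome is governed by the parity of the number of admissible subtractions. This is exactly what the trichotomy encodes: case~(i) detects that $\x'$ is terminal and returns $\P$; case~(ii) detects a move to a terminal position and returns $\N$ (using Remark~\ref{rem:NtoN}); otherwise case~(iii) computes, for each admissible $i$, the largest $k_i$ with $k_i\s_i\le\x'$, whereupon the child $\x'-\s_i$ is an $\N$-position precisely when $k_i$ is even, so that $\x'\in\P$ if and only if every admissible $k_i$ is even. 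Collecting these checks yields the outcome of $\x'$, hence of $\x$.

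Finally I would account for the cost. The reduction stage performs $O(d)$ additions to form $\s_1+\s_2$, $O(d)$ integer divisions to form the quotients and their minimum, and $O(d)$ multiplications and subtractions to form $\x'$; crucially every intermediate quantity is bounded in magnitude by the corresponding coordinate of $\x$ (in particular each entry of the product $k(\s_1+\s_2)$ is at most $x_i$), so no operand ever exceeds the input in bit-length. The decision stage adds only $O(d)$ further comparisons and divisions on operands of the same size. Thus the procedure executes $O(d)$ arithmetic operations on integers no larger than those already present in the input, which is linear in the succinct (binary) input size, in sharp contrast to the naive tabulation of all positions below $\x$, whose table already has $\prod_i(x_i+1)$ entries and is therefore exponential \cite{fraenkel2004complexity}.

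The step I expect to be the main obstacle is pinning down the two interacting subtleties that make the word ``linear'' legitimate. First, the minimality argument must genuinely confine the post-reduction analysis to a constant number of one-move sub-games, so that the decision stage is honestly $O(d)$ and hides no recursion. Second, one must fix the cost model: under unit-cost arithmetic each of the $O(d)$ operations is a single step and ``linear in the dimension (hence in the number of input integers)'' is immediate, whereas under bit complexity the divisions and multiplications are only quasi-linear per coordinate, so the claim of strict linearity in the total bit-length hinges on the no-growth property of the operands established above. Making this distinction precise, rather than the routine operation count, is where the care lies.
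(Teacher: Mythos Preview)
Your proposal is correct and follows exactly the approach the paper intends: the paper's own proof is the single line ``See the paragraph after Theorem~\ref{thm:2mddim}'', so the argument \emph{is} the reduction to the minimal representative $\x'$ together with the trichotomy (i)--(iii), which you have spelled out faithfully and in more detail than the paper does. Your additional remarks---the explicit observation that minimality of $\x'$ forbids any play from mixing the two moves, the parity description of the resulting one-move games, and the caveat about unit-cost versus bit-cost arithmetic---go beyond what the paper records but are entirely in the same spirit; the paper simply asserts that ``the complexity is in essence the same as determining the remainder after division'' and leaves it at that.
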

\begin{proof}

See the paragraph after Theorem~\ref{thm:2mddim}.
\end{proof}

 \section{Three moves in two dimensions}\label{sec:three_move_game}
We consider special types of  {\sc three-move subtraction} and identify some properties of their outcomes. In particular, we are interested in: when does the \P-to-\P\ property from {\sc two-move subtraction}  continue to hold?\footnote{The nature of the results is `lemma', and we predict that they will be useful in future work if not used here.} One attractive family is the family of {\sc three-move additive} rulesets. 

\begin{defi}[{\sc additive subtraction}]\label{def:addsym}
The ruleset $S=\{(a,b),(c,d),(e,f)\}$ is additive if $e=a+c$ and $f=b+d$.
\end{defi}

Figure~\ref{fig:additive} depicts outcomes of the additive rulesets $S=\{(1,2),(2,1),(3,3)\}$ and $S=\{(1,2),(3,4),(4,6)\}$. See also Figures~\ref{fig:122133color} and \ref{fig:coloring123446}.

  \begin{figure}[htbp!]
  \begin{center}
\includegraphics[width=6cm]{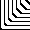}\hspace{.5 cm}  \includegraphics[width=6cm]{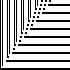}
  \end{center}
  \caption{The initial outcomes of {\sc additive subtraction} $S=\{(1,2),(2,1),(3,3)\}$ and $S=\{(1,2),(3,4),(4,6)\}$, respectively.}
  \label{fig:additive}
  \end{figure}
\begin{lem}[{\sc additive subtraction}]\label{lem:three_move_withsepecialstructure}
Let $S=\{(a,b),(c,d), (a+c,b+d)\}$. If  $(x+2a+c, y+2b+d)\in \P$ and $(x+2c+a, y+2d+b)\in \P$ then $(x,y) \in \P$.
 \end{lem}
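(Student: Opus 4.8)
The plan is to work in vector notation, writing $\s_1=(a,b)$ and $\s_2=(c,d)$, and noting that the third move is $\s_3=(a+c,b+d)=\s_1+\s_2$. In this language the two hypotheses read $\x+2\s_1+\s_2\in\P$ and $\x+\s_1+2\s_2\in\P$, and the goal is to show $\x\in\P$. The whole argument rests on the basic normal-play dichotomy: a $\P$-position has \emph{all} of its options in $\N$, whereas an $\N$-position has \emph{at least one} option in $\P$.

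First I would harvest three $\N$-positions from the two hypotheses by reading off options of the two given $\P$-positions. From $\x+2\s_1+\s_2\in\P$, subtracting the move $\s_1$ yields $\x+\s_1+\s_2\in\N$, and subtracting the move $\s_3=\s_1+\s_2$ yields $\x+\s_1\in\N$. From $\x+\s_1+2\s_2\in\P$, subtracting $\s_3$ yields $\x+\s_2\in\N$. Each of these subtractions is a legal move because the resulting vector is $\ge\boldsymbol 0$ (indeed $\x\ge\boldsymbol 0$ forces it), so each really is an option of the respective $\P$-position and hence lies in $\N$.

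The punchline is then to inspect the position $\x+\s_1+\s_2$, which the previous step placed in $\N$. Its three options, obtained by subtracting $\s_1$, $\s_2$, and $\s_3$ respectively, are exactly $\x+\s_2$, $\x+\s_1$, and $\x$; all three moves are available since $\x\ge\boldsymbol 0$. Because $\x+\s_1+\s_2\in\N$, at least one of these options is a $\P$-position. But $\x+\s_2$ and $\x+\s_1$ were already shown to be in $\N$, so the remaining option $\x$ must be in $\P$, which is the desired conclusion.

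I do not expect a genuine obstacle here: the only delicate point is bookkeeping of move legality, confirming that each subtraction lands back in $\B^2$, and this is automatic because $\x\ge\boldsymbol 0$ makes all three options of $\x+\s_1+\s_2$ valid. The conceptual crux—and the step I would flag as the one to get right—is recognizing that the additive structure $\s_3=\s_1+\s_2$ is exactly what makes the third option of $\x+\s_1+\s_2$ collapse to $\x$ itself; without additivity this coincidence fails. The argument also genuinely consumes both hypotheses: the first controls $\x+\s_1+\s_2$ and $\x+\s_1$, while the second is needed to pin down $\x+\s_2$.
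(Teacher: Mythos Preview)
Your proof is correct and follows essentially the same approach as the paper's: both extract $\x+\s_1+\s_2,\ \x+\s_1,\ \x+\s_2\in\N$ from the options of the two hypothesized $\P$-positions, then examine the options of the $\N$-position $\x+\s_1+\s_2$ to force $\x\in\P$. Your write-up is in fact somewhat more explicit than the paper's about which subtractions yield which facts and about move legality.
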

\begin{proof}
The set of options of $(x+2a+c,y+2b+d)$ is 
$$\{(x+a+c,y+b+d),(x+a,y+b),(x+2a,y+2b)\}\subset \N.
$$  
The set of options of $(x+2c+a,y+2d+b)$ is 
$$\{(x+c+a,y+d+b),(x+c,y+d),(x+2c,y+2d)\}\subset \N.
$$ 
For each option, there is a move to a $\P$-position. The option $(x+a+c,y+b+d)\in\N$ occurs in both sets. But both $(x+a,y+b),(x+c,y+d)\in\N$. Hence the third move must lead to a $\P$-position. Hence $(x,y)\in \P$.
%The proof is immediate by the definition of $\P$ and $\N$  positions.
\end{proof}

\begin{rem}
 The reverse direction of Lemma~ \ref{lem:three_move_withsepecialstructure} is not true. Consider the ruleset $S =\{(1,1),(2,2),(3,3)\}$. Then $(0,0) \in \P$ but  $(5,5) \notin \P$. 
\end{rem}
The next lemma shows that, given some proviso, we can still have \P-to-\P\ update rules for {\sc additive subtraction}.
\begin{lem}[{\sc additive subtraction} \P-to-\P]\label{lem:three_move_PtoP}
Let $S=\{(a,b),(c,d), (a+c,b+d)\}$ and suppose $(x,y)\in\P$. 
\begin{enumerate}[(i)]
    \item If $(x+2a+2c, y+2b+2d)\in \N$ then $(x+2c+a, y+2d+b)\in \P$ or $(x+2a+c, y+2b+d)\in \P$;
    \item if $(x+2a+c, y+2b+d)\in \N$ then $(x+2a,y+2b)\in\P$;%\ur{The converse will be used.}
    \item if $(x+2c+a, y+2d+b)\in \N$ then  $(x+2c,y+2d)\in\P$;%\ur{The converse will be used.}
    \item if both $(x+2c+a, y+2d+b)\in \N$ and $(x+2a+c, y+2b+d)\in \N$ then $(x+2a+2c, y+2b+2d)\in \P$.%\ur{This is the converse of (i).}
\end{enumerate}
 \end{lem}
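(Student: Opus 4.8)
The plan is to introduce compact lattice coordinates relative to the two generating moves and then read off each of the four claims from the elementary \P/\N\ characterisations, with no computation beyond listing options. Write $\s_1=(a,b)$ and $\s_2=(c,d)$, so that the additive third move is $\s_3=\s_1+\s_2$, and abbreviate $[m,n]:=(x,y)+m\s_1+n\s_2=(x+ma+nc,\,y+mb+nd)$. In these coordinates the options of $[m,n]$ (obtained by subtracting $\s_1$, $\s_2$, $\s_3$ in turn) are exactly $[m-1,n]$, $[m,n-1]$ and $[m-1,n-1]$, whenever these have nonnegative entries; since $|S|=3$ this list is complete. The single fact I will feed into every case comes from the hypothesis $[0,0]=(x,y)\in\P$: every position with a move onto a \P-position lies in \N, and the positions with a move to $[0,0]$ are precisely $[1,0]$, $[0,1]$, $[1,1]$. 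Hence $[1,0],[0,1],[1,1]\in\N$; in particular the diagonal neighbour $[1,1]$ is in \N, which is what does the work below.

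For parts (ii) and (iii) I would argue by elimination. The options of $[2,1]$ are $[1,1]$, $[2,0]$, $[1,0]$, of which $[1,1]$ and $[1,0]$ are already known to be in \N. Since a position in \N\ must have at least one option in \P, the hypothesis $[2,1]\in\N$ forces the remaining option $[2,0]$ into \P, which is (ii); part (iii) is the mirror statement obtained by swapping the roles of $\s_1$ and $\s_2$, using that the options of $[1,2]$ are $[0,2]$, $[1,1]$, $[0,1]$ with $[1,1],[0,1]\in\N$. Parts (i) and (iv) concern the corner $[2,2]$, whose options are $[1,2]$, $[2,1]$, $[1,1]$. For (iv), both $[1,2]$ and $[2,1]$ are assumed to be in \N, and $[1,1]\in\N$ by the base fact, so all options of $[2,2]$ lie in \N\ and therefore $[2,2]\in\P$. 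For (i), the assumption $[2,2]\in\N$ supplies an option in \P; since $[1,1]\in\N$ cannot be that option, it must be $[1,2]$ or $[2,1]$, giving the disjunction claimed.

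There is essentially no analytic obstacle here; the only thing demanding care is bookkeeping. I would double-check at each step that the three positions I list really are all the options, which is immediate from $|S|=3$ and the additive form of $\s_3$, and that each listed option has nonnegative coordinates, so that it is a genuine move and its outcome is defined. Every position appearing above is of the form $[m,n]$ with $m,n\ge 0$ added to $(x,y)$, so nonnegativity is automatic. The substantive content is entirely in the observation $[1,1]\in\N$, i.e. that from $(x,y)\in\P$ the additive move already pins down the outcome of one option of each of $[2,1]$, $[1,2]$ and $[2,2]$; once that is in hand, (i)--(iv) are just the two defining properties of \P\ and \N\ applied to the respective corner.
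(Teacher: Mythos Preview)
Your proof is correct and follows essentially the same approach as the paper: both start from $(x,y)\in\P$ to obtain $(x,y)+\s_1,(x,y)+\s_2,(x,y)+\s_1+\s_2\in\N$ and then prove each part by elimination among the three options of the relevant corner position. Your $[m,n]$ lattice notation is a clean repackaging that makes the bookkeeping more transparent, and your explicit nonnegativity check is a detail the paper leaves implicit.
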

\begin{proof}
Let $S=\{(a,b),(c,d), (a+c,b+d)\}$. 
The set of positions with $(x,y)\in \P$ as an option is 
\begin{align}\label{eq:xopt}
\{(x+a+c,y+b+d),(x+a,y+b),(x+c,y+d)\}\subset \N.
\end{align} 
For (i), observe that one of the options must be in \P. But $(x+a+c,y+b+d)\in \N$. Hence $(x+2c+a, y+2d+b)\in \P$ or $(x+2a+c, y+2b+d)\in \P$.
For (ii) and (iii), observe that two of the options lead to \N-positions in \eqref{eq:xopt}. Hence the third must be in \P. 
For (iv), observe that all three options of  $(x+2a+2c, y+2b+2d)$ are \N-positions. 
\end{proof}

The following result is an adaptation from one-dimensional.
\begin{lem}
\label{Lemma_with_twin}
Let $S=\{(a,b),(a+c,b+c), (a+2c,b+2c)\}$. If $(x, y)\in \P$ then  $(x+2a+2c, y+2b+2c)\in \P$.
\end{lem}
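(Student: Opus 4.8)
The plan is to exploit the arithmetic-progression structure of $S$: with $\s_1=(a,b)$, $\s_2=(a+c,b+c)$, and $\s_3=(a+2c,b+2c)$, the set is symmetric about its middle element $\s_2$. The two facts I would record first are that the prescribed translation vector equals $2\s_2$, since $(2a+2c,2b+2c)=2\s_2=\s_1+\s_3$, and that $2\s_2-\s_i\in S$ for every $i$; concretely $2\s_2-\s_1=\s_3$, $2\s_2-\s_2=\s_2$, and $2\s_2-\s_3=\s_1$, so $2\s_2-S=S$.

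Write $\x=(x,y)\in\P$ and set $\x'=\x+2\s_2=(x+2a+2c,\,y+2b+2c)$. The core step is to read off the option set of $\x'$. For each $i$ the move $\s_i$ is legal from $\x'$, because $\x'-\s_i=\x+(2\s_2-\s_i)\ge\x\ge\boldsymbol 0$, and the reflection identity $2\s_2-\s_i\in S$ shows that $\x'-\s_i=\x+\s_j$ for the mirror index ($j=3,2,1$ as $i=1,2,3$). Since $S$ has exactly three elements, these exhaust the options, so the option set of $\x'$ is precisely $\{\x+\s_1,\x+\s_2,\x+\s_3\}$.

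It remains to note that each of these three options is an $\N$-position: from $\x+\s_i$ the move $\s_i$ returns to $(\x+\s_i)-\s_i=\x\in\P$, hence $\x+\s_i\in\N$. As every option of $\x'$ lies in $\N$, the normal-play rule forces $\x'\in\P$, which is the assertion.

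This is the easy, one-sided half of the $\P$-to-$\P$ argument of Lemma~\ref{lem:PtoP}, now driven by the symmetry $2\s_2-S=S$ rather than by $|S|=2$; accordingly I expect no real obstacle. The only point needing care is the bookkeeping in the middle paragraph, namely verifying that the three displayed options are all legal and that they are the only options of $\x'$, so that the criterion ``all options in $\N$ implies $\P$'' applies. I would also stress that only the forward implication is claimed, in keeping with the fact that for three-move rulesets the full biconditional can fail (cf. the remark following Lemma~\ref{lem:three_move_withsepecialstructure}).
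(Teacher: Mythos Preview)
Your proof is correct and takes essentially the same approach as the paper's: both observe that the option set of $\x+2\s_2$ coincides with $\{\x+\s_1,\x+\s_2,\x+\s_3\}$, which consists entirely of $\N$-positions since each has $\x\in\P$ as an option. You make the underlying reflection identity $2\s_2-S=S$ and the legality checks explicit, whereas the paper simply asserts the coincidence of the two sets, but the argument is identical.
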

\begin{proof}

  Let $S=\{(a,b),(a+c,b+c), (a+2c,b+2c)\}$ be the ruleset. The set of positions with $(x,y)\in \P$ as an option is 
$$
\{(x+a,y+b), (x+a+c,y+b+c), (x+a+2c, y+b+2c)\}\subset \N.
$$
This set coincides with the set of options of $(x+2a+2c, y+2b+2c)$. %is 
%$$
%\{ (x+a+2c, y+b+2c), (x+a+c, y+b+c), (x+a,y+b) \} \subset \N.
%$$
%Since all the $((x+a+2c, y+b+2c), %(x+a+c, y+b+c), (x+a,y+b))\in \N$, 
Therefore $ (x+2a+2c, y+2b+2c)\in \P$.
\end{proof}

Figure~\ref{fig:233112_100} gives an example of a ruleset with {\sc asymmetric additive} rules. 

\begin{defi}[{\sc asymmetric additive subtraction}]
    The ruleset $$S=\{(a,b),(c,d),(e,f)\}$$ is {\sc asymmetric additive} if $e=a+c$ and $d=b+f$.
\end{defi}

%$S=\{(a,c),(b,c+d), (a+b,d)\}$

\begin{lem}[{\sc asymmetric additive subtraction}]\label{lem:three_move_sum}

Let $S=\{(a,b),(c,b+d), (a+c,d)\}$ and suppose $(x,y)\in\P$. 
\begin{enumerate}[(i)]
\item If  $(x+2a+2c, y+2d) \in \N$  then   $(x+a+2c, y+2d-b) \in \P$ or  $(x+2a+c, y+d-b) \in \P$;
\item if $(x+a+c, y+2b+d) \in \N$ then $(x,y+2b) \in \P$;
\item if  $(x+2a+c, y+b+d) \in \N$ then $(x+2a,y) \in \P$;
\item  if both  $(x+a+2c, y+2d-b) \in \N$ and  $(x+2a+c, y+d-b) \in \N$ then $(x+2a+2c, y+2d) \in \P$.
\end{enumerate}
\end{lem}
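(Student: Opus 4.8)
The plan is to mimic the proof of Lemma~\ref{lem:three_move_PtoP} almost verbatim, replacing the symmetric third move $(a+c,b+d)$ by the asymmetric one $(a+c,d)$ and adjusting the bookkeeping of second coordinates. Write the three moves as $\s_1=(a,b)$, $\s_2=(c,b+d)$ and $\s_3=(a+c,d)$. The single fact that drives all four items is that, since $(x,y)\in\P$, every position from which one move reaches $(x,y)$ must lie in $\N$; that is,
\begin{equation*}
O:=\{(x+a,\,y+b),\ (x+c,\,y+b+d),\ (x+a+c,\,y+d)\}\subset\N,
\end{equation*}
these being $(x,y)+\s_1$, $(x,y)+\s_2$ and $(x,y)+\s_3$ respectively.

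For each item I would then fix the relevant ``target'' position, compute its three options by subtracting $\s_1,\s_2,\s_3$ in turn, and check which of those options already sit inside $O$. For (i) the target is $(x+2a+2c,y+2d)$, whose options are $(x+a+2c,y+2d-b)$, $(x+2a+c,y+d-b)$ and $(x+a+c,y+d)$; the last lies in $O\subset\N$, and since the target is assumed $\N$ it must have a $\P$-option, forcing one of the first two into $\P$---exactly the claimed disjunction. For (ii) the target $(x+a+c,y+2b+d)$ has options $(x+c,y+b+d)$, $(x+a,y+b)$ and $(x,y+2b)$; the first two lie in $O$, so the $\N$-hypothesis forces $(x,y+2b)\in\P$. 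Item (iii) is identical in spirit: the options of $(x+2a+c,y+b+d)$ are $(x+a+c,y+d)$, $(x+2a,y)$ and $(x+a,y+b)$, two of which are in $O$, so $(x+2a,y)\in\P$. For (iv) I reuse the option list from (i): the two positions assumed to be in $\N$ are precisely the $\s_1$- and $\s_2$-options of $(x+2a+2c,y+2d)$, while its $\s_3$-option $(x+a+c,y+d)$ lies in $O\subset\N$; hence all three options are $\N$ and the target is in $\P$.

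The only real work is the arithmetic verification---confirming that subtracting each designated move from each target lands exactly on a member of $O$ or on a position named in the conclusion. I expect this to be the main (though routine) obstacle, because the asymmetry $b+d$ vs.\ $d$ in the second coordinates of $\s_2$ and $\s_3$ makes several cancellations less symmetric than in the additive case; one must check, for instance, that the $\s_3$-option in (i) and (iv) is $(x+a+c,y+d)$ rather than $(x+a+c,y+b+d)$. Once these identifications are confirmed, each item follows at once from the defining property of the outcome sets: a position lies in $\N$ precisely when it has a $\P$-option, and in $\P$ precisely when all of its options lie in $\N$.
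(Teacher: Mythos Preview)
Your proposal is correct and follows essentially the same approach as the paper's own proof: both start from the observation that $\{(x+a,y+b),(x+c,y+b+d),(x+a+c,y+d)\}\subset\N$, then for each item compute the three options of the target position, identify which already lie in this set, and conclude via the defining $\P$/$\N$ dichotomy. The arithmetic identifications you list match the paper's line for line.
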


\begin{proof}
Let  $S=\{(a,b),(c,b+d),(a+c,d)\}$. The set of positions with $(x,y)\in \P$ as an option is 
\begin{align}\label{eq:xopt_three}
\{(x+a,y+b),(x+c,y+b+d),(x+a+c,y+d)\}\subset \N.
\end{align} 

Consider the assumptions of item (i). The set of options of $(x+2a+2c, y+2d)\in\N$ is 
$\{(x+a+2c,y+2d-b),(x+2a+c,y+d-b),(x+a+c,y+d)\}.
$
By \eqref{eq:xopt_three}, hence $(x+a+2c, y+2d-b) \in \P$ or  $(x+2a+c, y+d-b) \in \P$.

Consider the assumption of item (ii).  The set of options of $(x+a+c , y+2b+d)\in\N$ is 
$\{(x+c,y+b+d),(x+a,y+b),(x,y+2b)\}$.   
By \eqref{eq:xopt_three}, $\{(x+c,y+b+d),(x+a,y+b)\} \subset \N$. Hence $(x,y+2b) \in \P$. 

For the assumption of item (iii), the set of options of $(x+2a+c, y+b+d)\in\N$ is 
$\{(x+a+c,y+d),(x+2a,y),(x+a,y+b)\}$.  
By \eqref{eq:xopt_three}, $(x+a+c,y+d), (x+a,y+b) \subset \N$. Hence  $(x+2a,y) \in \P$.

Consider the assumptions of item (iv). The set of options of $(x+2a+2c, y+2d)$ is $\{(x+a+2c,y+2d-b),(x+2a+c,y+d-b),(x+a+c,y+d)\}\subset \N$. 
By \eqref{eq:xopt_three}, hence $(x+2a+2c, y+2d) \in \P$.
\end{proof}

\begin{figure}[htbp!]
  \begin{center}
 \includegraphics[width=8cm]{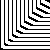}
  \end{center}
  \caption{ The initial outcomes of the {\sc asymmetric additive} ruleset $S=\{(1,2), (2,3), (3,1)\}$ (50 by 50 positions).}
  \label{fig:233112_100}
  \end{figure}

\section{Outcome segmentation}
\label{sec:outcomesegment}
When it comes to generic {\sc finite two-dimensional subtraction}, we will provide some thrilling open problems in Section~\ref{sec:picopen}, which requires a couple of technical definitions. The definitions are also used to compute \P-positions in some instances in this section.

An {\em outcome segment} is a special type of geometry on the outcomes. %that we define as follows. 

\begin{defi}[Coloring Scheme/Automaton]
   A subset $\Gamma $ of the first quadrant may permit a coloring scheme as follows. 
An update rule $u=(u_1,u_2)$ takes as input a colored position $(x,y,c)$, where $c\in C_u\subset C$, a given  set of colors and where $(x,y)\in\B$. The output is a colored position $(x+u_1,y+u_2, c')\in \Gamma \times C$. Thus, a single color can have several update rules attached to it. A legal coloring scheme $(X_0, U, C)$ gives at most one color to each position in $\Gamma$; here $X_0\subset \Gamma$ is a set of initially $C$-colored positions and $U$ is a finite set of update rules.  
\end{defi}

Thus, a coloring scheme could be regarded as a simple cellular automaton, where the color of each cell depends on at most one younger cell. For known Turing complete cellular automatons (such as a rule 110), updates depend on several (three) younger cells.

Note however that a coloring scheme permits a cell to be `updated' more than once provided it is given the same color.

\begin{defi}[Outcome Segment]
\label{def:outcomesegment}
   Consider a subset of the first quadrant $\Gamma=\{(x,y)\mid \alpha x +k<y <\beta x+m\}\subset \B^2$, for some fixed rationals $\alpha, \beta, k, m$, and a finite set of update rules $U$. Then $\Gamma$ is a  $U$-segment it permits a coloring scheme via the rules in $U$. Suppose that a $U$-segment $\Gamma$ partitions the \P- and \N-positions of a given ruleset, such that the colored positions correspond to the \P-positions. Then $\Gamma$ is an outcome segment.
   \end{defi}
%   \ur{I am using different terminology/notation for these concepts. We need to make decisions.}
\begin{example}[Outcome Segment]\label{ex:OS}
    %\ur{Here, we illustrate the perfect outcome segments for one-move games.}
    See the middle segment in Figure~\ref{fig:tiling_example}. Set the initial coloring as $\mathbf{red}=\{(0,0),(0,1),(0,2)\}$, $\mathbf{green}=\{(1,0),(1,1)\}$ and $\mathbf{blue}=\{(2,1)\}$. The update rule $\vdash$ ``adjoin'' is: 
    \begin{itemize}
        \item $(x,y)\in \mathbf{red}\rightarrow (x+5,y+4)\vdash  \mathbf{green}$,
        \item $(x,y)\in \mathbf{green}\rightarrow (x+3,y+5)\vdash \mathbf{red}$,
        \item $(x,y)\in \mathbf{gre  en}\cup \mathbf{blue}\rightarrow (x+5,y+4)\vdash \mathbf{blue}$.
    \end{itemize}
    In Proposition~\ref{prop:OS}, we prove that this coloring scheme corresponds exactly to the \P-positions of the middle segment of the {\sc asymmetric additive} ruleset $$S=\{(1,2),(2,3),(3,1)\}.$$
\end{example}

\begin{defi}[Outcome Segmentation]
\label{def:outcomesegmentation}
    Consider a ruleset $S$. A finite union of outcome segments $\cup_{i}\Gamma_i$ is an outcome segmentation of $S$ (or just a segmentation of $S$) if, for all $i,j$, $\Gamma_i\cap \Gamma_j=\varnothing$, and, for all sufficiently large game boards of $n$ positions, they contain at least $n-o(n)$ positions.  A segmentation is perfect if it contains all positions. A $k$-segmentation has $k$ outcome segments. 
\end{defi}
The number of positions is $n-o(n)$ instead of $n$  because the border regions can be non-trivial strips instead of lines.

   \begin{example}[Outcome Segmentation]\label{ex:titlingex}
       There are three segments in Figure~\ref{fig:tiling_example}. The middle segment is described in Example~\ref{ex:OS}. The update rule for the lower segment is $(5,4)$ and the initial set consists of all positions $(x,0)$ with $x\ge 2$. The upper segment has two alternating update rules $(4,4)$ and $(4,5)$, with $(4,4)$ applied to the initial set $\{(0,y)\mid y\ge 3\}$. We prove that altogether this is a perfect outcome segmentation of the game in Figure~\ref{fig:tiling_example}.
   \end{example}
   The terminology ``outcome segmentation" is motivated by multiple observations where segments are separated by lines and/or strips.  
\begin{example}\label{ex:lines}
Consider the two lines $f(x)=9x/8+2$ and $g(x)=4x/5-4/5$. The positions $(x,y)$ in the middle (lower, upper) segment satisfy $\lceil g(x)\rceil < y <  \lceil f(x)\rceil $ ($y\le \lceil g(x)\rceil $, $y\ge \lceil f(x)\rceil $). 
\end{example}

\begin{figure}[htbp!]
\begin{center}
\begin{tikzpicture}[scale = 0.5]
%\node at (14.95,14.95) {\includegraphics[width=256pt]{23_31_12__30.png}};
\colorlet{yellow}{gray!60!yellow}
\draw[step=1 cm,yellow, thick] (0, 0) grid (23,21); 
%%%%%%%%%%%%%%%%%%
 \foreach \x/\y in %{0/34, 4/34, 8/34, 12/34, 16/34, 20/34, 24/34, 28/34, 0/33, 4/33, 8/33, 12/33, 16/33, 20/33, 24/33, 28/33, 29/33, 0/32, 4/32, 8/32, 12/32, 16/32, 20/32, 24/32, 28/32, 29/32, 30/32, 31/32, 0/31, 4/31, 8/31, 12/31, 16/31, 20/31, 24/31, 29/31, 30/31, 31/31, 32/31, 33/31, 0/30, 4/30, 8/30, 12/30, 16/30, 20/30, 24/30, 31/30, 32/30, 33/30, 34/30, 0/29, 4/29, 8/29, 12/29, 16/29, 20/29, 24/29, 33/29, 34/29, 0/28, 4/28, 8/28, 12/28, 16/28, 20/28, 24/28, 25/28, 26/28, 0/27, 4/27, 8/27, 12/27, 16/27, 20/27, 24/27, 25/27, 26/27, 27/27, 28/27, 0/26, 4/26, 8/26, 12/26, 16/26, 20/26, 26/26, 27/26, 28/26, 29/26, 30/26, 
 %{0/25, 4/25, 8/25, 12/25, 16/25, 20/25, 28/25, 29/25, 30/25, 31/25, 32/25, 0/24, 4/24, 8/24, 12/24, 16/24, 20/24, 21/24, 30/24, 31/24, 32/24, 33/24, 34/24, 0/23, 4/23, 8/23, 12/23, 16/23, 20/23, 21/23, 22/23, 23/23, 
 %{0/22, 4/22, 8/22, 12/22, 16/22, 21/22, 22/22, 23/22, 24/22, 25/22, 0/21, 4/21, 8/21, 12/21, 16/21, 23/21, 24/21, 25/21, 26/21, 27/21, 
 {0/20, 4/20, 8/20, 12/20, 16/20, 0/19, 4/19, 8/19, 12/19, 16/19, 17/19, 18/19, 0/18, 4/18, 8/18, 12/18, 16/18, 17/18, 18/18, 19/18, 20/18, 0/17, 4/17, 8/17, 12/17, 18/17, 19/17, 20/17, 21/17, 22/17, 0/16, 4/16, 8/16, 12/16, 20/16, 21/16, 22/16, 0/15, 4/15, 8/15, 12/15, 13/15, 0/14, 4/14, 8/14, 12/14, 13/14, 14/14, 15/14, 0/13, 4/13, 8/13, 13/13, 14/13, 15/13, 16/13, 17/13, 0/12, 4/12, 8/12, 15/12, 16/12, 17/12, 18/12, 19/12, 20/12, 21/12, 22/12, 0/11, 4/11, 8/11, 0/10, 4/10, 8/10, 9/10, 10/10, 0/9, 4/9, 8/9, 9/9, 10/9, 11/9, 12/9, 0/8, 4/8, 10/8, 11/8, 12/8, 13/8, 14/8, 15/8, 16/8, 17/8, 18/8, 19/8, 20/8, 21/8, 22/8,  0/7, 4/7, 0/6, 4/6, 5/6, 0/5, 4/5, 5/5, 6/5, 7/5, 0/4, 5/4, 6/4, 7/4, 8/4, 9/4, 10/4, 11/4, 12/4, 13/4, 14/4, 15/4, 16/4, 17/4, 18/4, 19/4, 20/4, 21/4, 22/4, 0/3, 0/2, 0/1, 1/1, 2/1, 0/0, 1/0, 2/0, 3/0, 4/0, 5/0, 6/0, 7/0, 8/0, 9/0, 10/0, 11/0, 12/0, 13/0, 14/0, 15/0, 16/0, 17/0, 18/0, 19/0, 20/0, 21/0, 22/0}%, 23/0, 24/0, 25/0, 26/0, 27/0, 28/0, 29/0, 30/0, 31/0, 32/0, 33/0, 34/0} 
 \filldraw[color = black, opacity =.1] (\x, \y) rectangle (\x+1, \y+1);% 

%%%%%%%%%%%%%%%%%
\foreach \x/\y in {2/1,6/4,6/5,7/5,10/8,11/8,10/9,11/9,12/9,10/10,14/13,15/13,16/13,17/13,15/12,16/12,14/14,15/14,18/18,19/18,20/18,18/17,18/19,19/17,20/17,21/17,22/17,20/16,21/16}%,27/21,32/25}
 \filldraw[color = blue, opacity =.6] (\x+.15, \y+.15) rectangle (\x+.85, \y+.85);
 
 \foreach \x/\y in {0/0,0/1,0/2,4/5, 4/6, 8/11,8/10,8/9,12/15,12/14,16/20,16/19,16/18}%,24/29,24/28,24/27, 20/23,20/24}
 \filldraw[color = red, opacity =.9] (\x+.15, \y+.15) rectangle (\x+.85, \y+.85);
  
 \foreach \x/\y in {1/0,1/1, 5/6,5/5,5/4,9/10,9/9,13/15,13/14,13/13,17/18,17/19}%,21/24,29/33}
 \filldraw[color = green!80!black] (\x+.15, \y+.15) rectangle (\x+.85, \y+.85);

\draw (-0.7 ,0.35) node {$0$};
%\draw (4, -.5) node {$9$};

\draw (0.35, -0.7) node {$0$};
%\draw (-.5, 4.75) node {$9$};

%Npos-translation 3,2 from 1/0,1/1, 0/0,0/1,0/2,2/1,
\foreach \x/\y in {3/3,3/4,2/3,2/4,2/5,4/4
}
\filldraw[color = yellow] (\x+.2, \y+.2) rectangle (\x+.8, \y+.8);
%Npos-translation 1,2 from 1/0,1/1, 0/0,0/1,0/2,2/1,
\foreach \x/\y in {2/2,2/3,1/2,1/3,1/4,3/3
}
\filldraw[color = yellow!50!black] (\x+.3, \y+.3)  rectangle (\x+.7, \y+.7);

%Npos translation 3,2 from 6/4,6/5,7/5,5/4,5/5,5/6,4/5,4/6
\foreach \x/\y in {8/7,8/8,9/8,7/7,7/8,7/9,6/8,6/9
}
\filldraw[color = yellow] (\x+.2, \y+.2) rectangle (\x+.8, \y+.8);
%Npos translation 1,2 from 6/4,6/5,7/5,5/4,5/5,5/6,4/5,4/6
\foreach \x/\y in {7/6,7/7,8/7,6/6,6/7,6/8,5/7,5/8
}
\filldraw[color = yellow!50!black] (\x+.3, \y+.3)  rectangle (\x+.7, \y+.7);
%Npos middle segment
\draw (4.5,3.5) node {$ii$};
\draw (3.5,5.5) node {$iii$};
\draw (3.5,2.5) node {$i$};

%Npos lower segment
\draw (16.5,5.5) node {$iii$};
\draw (16.5,6.5) node {$i$};
\draw (16.5,7.5) node {$ii$};
%Npos upper segm
\draw (5.5,16.5) node {$i$};
\draw (6.5,16.5) node {$ii$};
\draw (7.5,16.5) node {$iii$};

 \draw[thin] (0,2) -- (16,20);
 \draw[thin] (1,0) -- (21,16);%(31,24);

\end{tikzpicture}
\end{center}
\caption{The picture illustrates the initial 23 by 21 outcomes of the asymmetric additive ruleset $S =\{(1,2),(2,3),(3,1)\}$, together with the \P-to-\P\ update rules for the middle segment.  At each level $k$, the green cells define the red cells at level $k+1$: \P-to-\P\ rule $(2,3)+(1,2)=(3,5)$. Note that, from each red cell, the move $(3,1)$ (which is not part of the \P-to-\P\ update) leads to an $\N$-position. 
All colors at level $k$ define the green and blue cells at level $k+1$: \P-to-\P\ rule $(2,3)+(3,1)=(5,4)$. Note that, from each green or blue cell, the move $(1,2)$ leads to an $\N$-position. For an explanation of the remaining coloring and numbering, see the proof of Proposition~\ref{prop:OS}.}
\label{fig:tiling_example}
\end{figure}
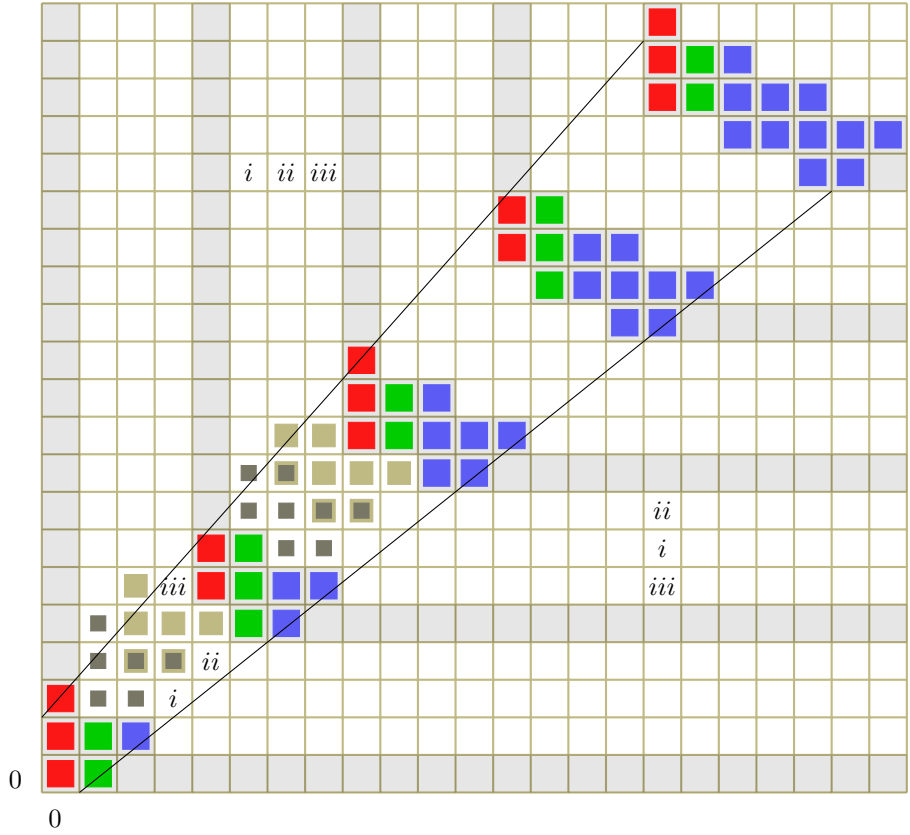

Let us analyze the proposed perfect outcome segmentation from Example~\ref{ex:titlingex}.  
\begin{prop}\label{prop:OS}
    Let  $S =\{(1,2),(2,3),(3,1)\}$. This game has a perfect outcome segmentation given by three sets of update rules together with three initial sets of colored positions.
\end{prop}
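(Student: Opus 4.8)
The plan is to recognize first that $S=\{(1,2),(2,3),(3,1)\}$ is {\sc asymmetric additive} in the sense of the preceding definition, with $(a,b)=(1,2)$, $c=2$ and $d=1$ (so that $(c,b+d)=(2,3)$ and $(a+c,d)=(3,1)$). This makes the conditional \P-to-\P\ update rules of Lemma~\ref{lem:three_move_sum} available and identifies the three pairwise move-sums $(1,2)+(2,3)=(3,5)$, $(2,3)+(3,1)=(5,4)$ and $(1,2)+(3,1)=(4,3)$, of which the first two are exactly the translations used in the coloring schemes of Examples~\ref{ex:OS} and~\ref{ex:titlingex}; the remaining upper-region rules $(4,4)$ and $(4,5)$ are treated separately below. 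I would then fix the boundary lines $f(x)=9x/8+2$ and $g(x)=4x/5-4/5$ from Example~\ref{ex:lines} and define the lower, middle and upper regions by $y\le\lceil g(x)\rceil$, $\lceil g(x)\rceil<y<\lceil f(x)\rceil$ and $y\ge\lceil f(x)\rceil$. Since $g(x)<f(x)$ for all $x\ge 0$, for each fixed $x$ these three $y$-ranges are consecutive and exhaustive, so the regions are pairwise disjoint with union $\B^2$. This already secures the word \emph{perfect} in the statement and reduces the problem to showing that within each region the colored positions coincide exactly with the \P-positions.

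The core is a single strong induction on $x+y$, carried out simultaneously over all three regions, legitimate because every move strictly decreases $x+y$ (by $3$, $5$ or $4$). For the base I record that every position on the axes is terminal and hence in \P, since the column $x=0$ and the row $y=0$ admit no legal move; these terminal positions are precisely the initial colored sets, distributed as $\{(0,0),(0,1),(0,2)\}$ (red), $\{(1,0),(1,1)\}$ (green), $\{(2,1)\}$ (blue) for the middle region, $\{(x,0)\mid x\ge 2\}$ for the lower region and $\{(0,y)\mid y\ge 3\}$ for the upper region. In the inductive step I prove the two implications \textbf{colored $\Rightarrow$ \P} and \textbf{uncolored $\Rightarrow$ \N}. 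The first direction is the clean one: each non-initial colored position $P$ is the image under one update rule of a colored (hence, by induction, \P) position $P'$, where that rule is a sum $m_i+m_j$ of two moves; subtracting either $m_i$ or $m_j$ returns to a position of the form $P'+m_j$ or $P'+m_i$, which has the \P-position $P'$ as an option and is therefore \N, while subtracting the \emph{third} move lands on an \N-position, verified directly (and noted in the caption of Figure~\ref{fig:tiling_example} for the middle region, where the third moves are $(3,1)$ from red and $(1,2)$ from green/blue). Thus all three options of a colored position are \N, so it lies in \P.

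The harder direction, \textbf{uncolored $\Rightarrow$ \N}, together with the boundary bookkeeping, is where I expect the real work to lie. For an uncolored position I must exhibit a colored option, and I would obtain this from the conditional converses Lemma~\ref{lem:three_move_sum}(i) and (iv): these state precisely that one of the two intermediate predecessors of a position two update-steps ahead is \P whenever that position is \N, and conversely, so that the periodic color pattern cannot leave a gap of uncolored positions that are all \N. The delicate point is that near the two boundary lines a move may cross from one region into an adjacent one; for example in the upper region the moves $(1,2)$ and $(2,3)$ can drop into the middle region while $(3,1)$ stays, which is exactly why the upper update rules $(4,4)$ and $(4,5)$ are \emph{not} single move-sums yet still produce \P-positions (their alternation has net translation $(8,9)$, matching the slope $9/8$ of $f$). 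I would therefore establish the colored/uncolored dichotomy along each boundary strip by a finite case check keyed to residues modulo the relevant period, feeding in the already-proved outcomes of the neighbouring region; the consecutiveness of the $y$-ranges guarantees that each crossing move lands in a uniquely determined region whose outcomes are known by induction. Assembling the three interlocked inductions yields that the colored set equals $\P$ in each region, and since the regions tile $\B^2$ the segmentation is perfect.
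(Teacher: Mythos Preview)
Your overall architecture---three regions bounded by $f$ and $g$, induction on $x+y$, and the two implications colored $\Rightarrow\P$ and uncolored $\Rightarrow\N$---matches the paper's, and your treatment of colored $\Rightarrow\P$ for the lower and middle segments is essentially the paper's argument. There are, however, two genuine gaps.

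First, your handling of uncolored $\Rightarrow\N$ via Lemma~\ref{lem:three_move_sum}(i),(iv) does not work as stated. With $(a,b,c,d)=(1,2,2,1)$, parts (i) and (iv) concern the translate $(2a+2c,2d)=(6,2)$, which is \emph{not} any of the update rules $(3,5)$, $(5,4)$, $(4,4)$, $(4,5)$ in the coloring scheme; parts (ii) and (iii) cover $(3,5)$ and $(4,3)$ but not $(5,4)$, which is the main rule in both the lower and the middle segment. So the lemma does not supply the ``no uncolored all-$\N$ gap'' conclusion you want. The paper instead exhibits an explicit move to a colored cell from every uncolored cell: in the lower segment the move is $(1,2)$, $(2,3)$ or $(3,1)$ according to $y\bmod 4$; in the middle segment one translates the (already verified) base-level $\N$-cells by the update rules and observes that the same winning move works one level up, plus a brown-cell overlap argument to fill the $2$--$3$ cell strips between consecutive colored clusters. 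Your proposal would need to replace the appeal to Lemma~\ref{lem:three_move_sum} with this kind of explicit-move bookkeeping.

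Second, the upper segment is underspecified. You correctly note that $(4,4)$ and $(4,5)$ are not move-sums, but ``finite case check keyed to residues'' is not yet a proof. The paper's key observation is that the alternating $(4,4),(4,5)$ coloring is exactly what one obtains by applying the genuine \P-to-\P\ rule $(4,3)=(3,1)+(1,2)$ to an enlarged initial set consisting of the column $\{(0,y):y\ge 3\}$ together with the positions $(8k,3+9k)$ and $(4+8k,7+9k)$; these extra seeds are precisely the top red cells of the middle-segment clusters, already known to be $\P$ by the middle-segment argument. Once the upper coloring is re-expressed as a $(4,3)$-orbit, colored $\Rightarrow\P$ reduces to checking that the third move $(2,3)$ from any upper colored cell lands in $\N$, and uncolored $\Rightarrow\N$ follows by the $x\bmod 4$ move assignment analogous to the lower segment. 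Without this reduction your inductive step for the upper region has no mechanism to certify that a $(4,4)$- or $(4,5)$-translate of a $\P$-position is again $\P$.
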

\begin{proof}
    The update rules for the two lower segments are obtained by \P-to-\P\ combinations from the ruleset. %The \P-to-\P rule for the upper segment is $(3,1)+(1,2)=(4,3)$. 

    The update rule for the lower segment $\Gamma_0$ is $(2,3)+(3,1) = (5,4)$. The initial coloring corresponds to the terminal \P-positions $\{(2,0),(3,0),\ldots\}$. Since no rule has y-coordinate greater than three, there is no colored position within this segment that can reach another colored position. Hence \P-to-\N property holds. Enumerate the moves by $i:(1,2)$, $ii:(2,3)$ and $iii:(3,1)$. In Figure~\ref{fig:tiling_example}, row-wise generic type $ii, i, iii$ moves have been inserted at the lower segment, for positions with $y$-coordinates $1,2,3\pmod 4$ respectively. Clearly, they reach colored positions. 
    Hence the lower segment is an outcome segment for this ruleset.  

    For the middle segment $\Gamma_1$, we must check that, starting from any colored non-terminal position, the missing rules from the \P-to-\P\ updates reach \N-positions.  Indeed, the initial coloring given in Example~\ref{ex:OS} corresponds to terminal \P-positions, and the caption of Figure~\ref{fig:tiling_example} verifies this: starting from a colored position in the middle segment, the missing rules are smaller than the \P-to-\P\ rules that define the coloring. 
    
    The \N-to-\P~ property of the middle segment is justified by the translations of the colored regions to the two nuances of brown cells: the dark-brown cells are translated by the move $(1,2)$ and the light-brown are translated by the move $(2,3)$. By adapting this brown-coloring scheme at each level, there will be 3 or 2 non-colored cells between each pair of red-green-blue colored regions. Those cells take the same move as described for the lower and upper segments. By decomposing each color update, for $k\ge 2$, in two copies of the shape of level $k-2$. The proof follows by induction, similar to the simpler case in Proposition~\ref{prop:symadd}. We omit further technical details.

    Consider the upper segment $\Gamma_2$. Claim: if we manually color all positions of the form $(8k, 3+9k)$,  $(4+8k,7+9k)$ and $(4+8k,7+9k)$, then the \P-to-\P\ rule $(3,1)+(1,2)=(4,3)$ colors exactly the remaining cells from the rule: ``The upper segment has two alternating update rules $(4,4)$ and $(4,5)$, with $(4,4)$ applied to the initial set $\{(0,y)\mid y\ge 3\}$''. Indeed, if we would apply the \P-to-\P\ rule to those manually colored positions, we would recolor exactly the upper red cells from the coloring of the middle segment. And it is then easy to verify that, starting from a colored position in the upper segment, the missing rule $(2,3)$ reaches \N-positions.  Similar to the lower segment, the upper segment has type $i, ii, iii$ moves from non-colored positions as indicated in the Figure~\ref{fig:tiling_example}, for positions with $x$-coordinates $1,2,3\pmod 4$ respectively. Hence, the upper segment defines correctly the outcomes of the ruleset $S$.

    Finally, by the observation in Example~\ref{ex:lines} it is clear that $\Gamma_0\cap\Gamma_1=\varnothing$, and $\Gamma_1\cap\Gamma_2=\varnothing$.
\end{proof}

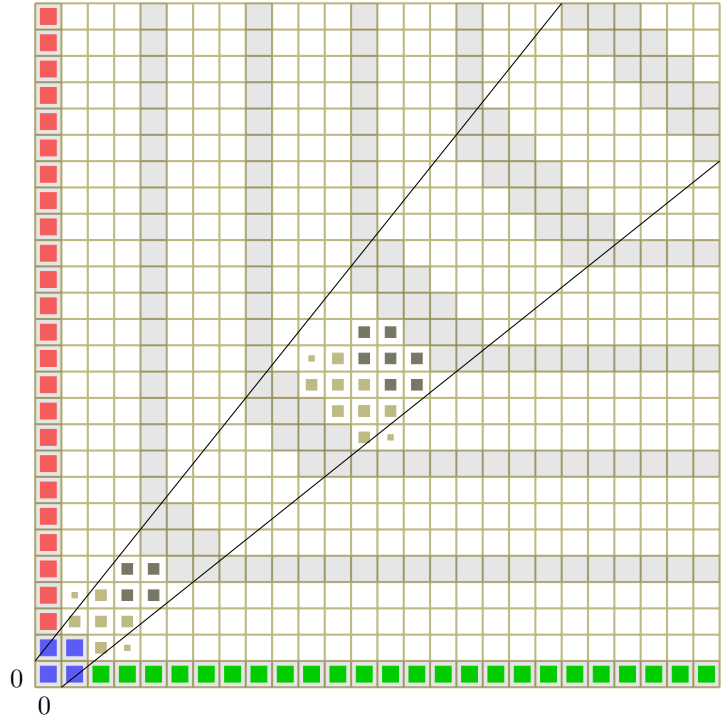
\begin{figure}[htbp!]
\begin{center}
\begin{tikzpicture}[scale = 0.35]
\colorlet{yellow}{gray!60!yellow}
\draw[step=1 cm,yellow, thick] (0, 0) grid (26,26); 
%%%%%%%%%%%%%122133
\foreach \x/\y in {
0/0, 0/1, 0/2, 0/3, 0/4, 0/5, 0/6, 0/7, 0/8, 0/9, 0/10, 0/11, 0/12, 0/13, 0/14, 0/15, 0/16, 0/17, 0/18, 0/19, 0/20, 0/21, 0/22, 0/23, 0/24, 0/25, %0/26, 0/27, 0/28, 0/29, 
1/0, 1/1, 2/0, 3/0, 4/0, 4/5, 4/6, 4/7, 4/8, 4/9, 4/10, 4/11, 4/12, 4/13, 4/14, 4/15, 4/16, 4/17, 4/18, 4/19, 4/20, 4/21, 4/22, 4/23, 4/24, 4/25, %4/26, 4/27, 4/28, 4/29, 
5/0, 5/4, 5/5, 5/6, 6/0, 6/4, 6/5, 7/0, 7/4, 8/0, 8/4, 8/10, 8/11, 8/12, 8/13, 8/14, 8/15, 8/16, 8/17, 8/18, 8/19, 8/20, 8/21, 8/22, 8/23, 8/24, 8/25, %8/26, 8/27, 8/28, 8/29, 
9/0, 9/4, 9/9, 9/10, 9/11, 10/0, 10/4, 10/8, 10/9, 10/10, 11/0, 11/4, 11/8, 11/9, 12/0, 12/4, 12/8, 12/15, 12/16, 12/17, 12/18, 12/19, 12/20, 12/21, 12/22, 12/23, 12/24, 12/25, %12/26, 12/27, 12/28, 12/29, 
13/0, 13/4, 13/8, 13/14, 13/15, 13/16, 14/0, 14/4, 14/8, 14/13, 14/14, 14/15, 15/0, 15/4, 15/8, 15/12, 15/13, 15/14, 16/0, 16/4, 16/8, 16/12, 16/13, 16/20, 16/21, 16/22, 16/23, 16/24, 16/25, %16/26, 16/27, 16/28, 16/29, 
17/0, 17/4, 17/8, 17/12, 17/19, 17/20, 17/21, 18/0, 18/4, 18/8, 18/12, 18/18, 18/19, 18/20, 19/0, 19/4, 19/8, 19/12, 19/17, 19/18, 19/19, 20/0, 20/4, 20/8, 20/12, 20/16, 20/17, 20/18, 20/25, %20/26, 20/27, 20/28, 20/29, 
21/0, 21/4, 21/8, 21/12, 21/16, 21/17, 21/24, 21/25, 22/0, 22/4, 22/8, 22/12, 22/16, 22/23, 22/24, 22/25, 23/0, 23/4, 23/8, 23/12, 23/16, 23/22, 23/23, 23/24, 24/0, 24/4, 24/8, 24/12, 24/16, 24/21, 24/22, 24/23, 25/0, 25/4, 25/8, 25/12, 25/16, 25/20, 25/21, 25/22}%, 26/0, 26/4, 26/8, 26/12, 26/16, 26/20, 26/21, 26/28, 26/29, 27/0, 27/4, 27/8, 27/12, 27/16, 27/20, 27/27, 27/28, 27/29, 28/0, 28/4, 28/8, 28/12, 28/16, 28/20, 28/26, 28/27, 28/28, 29/0, 29/4, 29/8, 29/12, 29/16, 29/20, 29/25, 29/26, 29/27}
\filldraw[color = black, opacity =.1] (\x, \y) rectangle (\x+1, \y+1);%
\draw (-0.7 ,0.35) node {$0$};
%\draw (4, -.5) node {$9$};

\draw (0.35, -0.7) node {$0$};
%\draw (-.5, 4.75) node {$9$};

\foreach \x/\y in {0/0,1/0,0/1,1/1}
 \filldraw[color = blue, opacity =.6] (\x+.2, \y+.2) rectangle (\x+.8, \y+.8);

\foreach \x/\y in {0/2,0/3,0/4,0/5,0/6,0/7,0/8,0/9,0/10,0/11,0/12,0/13,0/14,0/15,0/16,0/17,0/18,0/19,0/20,0/21,0/22,0/23,0/24,0/25}%,0/27,0/28,0/29}
 \filldraw[color = red, opacity =.6] (\x+.2, \y+.2) rectangle (\x+.8, \y+.8);

\foreach \x/\y in {2/0,3/0,4/0,5/0,6/0,7/0,8/0,9/0,10/0,11/0,12/0,13/0,14/0,15/0,16/0,17/0,18/0,19/0,20/0,21/0,22/0,23/0,24/0,25/0}%,27/0,28/0,29/0}
 \filldraw[color = green!80!black] (\x+.2, \y+.2) rectangle (\x+.8, \y+.8);

%Npos translation 1,2 and 2,1 from 0/0,0/1,1/0,1/1
\foreach \x/\y in {1/2,2/2,2/3,2/1,3/2
}
\filldraw[color = yellow] (\x+.3, \y+.3) rectangle (\x+.7, \y+.7);
\foreach \x/\y in {1/3,3/1
}
\filldraw[color = yellow] (\x+.4, \y+.4) rectangle (\x+.6, \y+.6);
%Npos translation 3,3 from 0/0,0/1,1/0,1/1
\foreach \x/\y in {3/3,3/4,4/3,4/4
}
\filldraw[color = yellow!50!black] (\x+.3, \y+.3)  rectangle (\x+.7, \y+.7);
%%%%%%%%%Here the inductive argument will be displayed
%Npos translation 9,12 and 10,11 from 0/0,0/1,1/0,1/1
\foreach \x/\y in {10/11,11/11,11/12,12/9,12/10,13/10,11/10,12/10,12/11
}
\filldraw[color = yellow] (\x+.3, \y+.3) rectangle (\x+.7, \y+.7);
\foreach \x/\y in {10/12,13/9
}
\filldraw[color = yellow] (\x+.4, \y+.4) rectangle (\x+.6, \y+.6);
%Npos translation 3,3 from 0/0,0/1,1/0,1/1
\foreach \x/\y in {12/12,12/13,13/12,13/13,14/12,13/11,14/11
}
\filldraw[color = yellow!50!black] (\x+.3, \y+.3)  rectangle (\x+.7, \y+.7);

 \draw[thin] (0,1) -- (20,26);
 \draw[thin] (1,0) -- (26,20);

\end{tikzpicture}
\end{center}
\caption{The picture illustrates the initial 26 by 26 outcomes of the symmetric additive game $S =\{(1,2),(2,1),(3,3)\}$, together with the initial coloring for the \P-to-\P\ update rules: blue updates to blue by using rules $(4,5)$ and $(5,4)$, red updates to red by using rule $(5,4)$ and green updates to green by using rule $(4,5)$. Hence, exactly all the gray cells will be colored. The yellow and brown cells are used in the proof of Proposition~\ref{prop:symadd}.}
\label{fig:122133color}
\end{figure}

We generalize the left-most ruleset in Figure~\ref{fig:additive}. 
Let us prove that the update rules in Figure~\ref{fig:122133color} are correct. The result holds in a somewhat more general statement, a symmetric restriction of Definition~\ref{def:addsym}. 
  
\begin{prop}[{\sc symmetric additive}]\label{prop:symadd}
    Consider symmetric additive rulesets of the form \\
    $S=\{(a,b),(b,a),(a+b,b+a)\}$,
    where $a\ge b/2$.  The set of $\P$-positions is given by the following updates. The set of terminal positions is $T_L\cup T_U\cup T_M$, where $T_L=\{(x,y)\mid y<a\}$, $T_U=\{(x,y)\mid x<a\}$ and $T_M=\{(x,y)\mid x< b, y< b\}$. Then, update the middle segment by the rules $r_1=(2a+b,2b+a)$ and $r_2=(2b+a,2a+b)$, by initializing with $T_M$. Update the lower (upper) segment via the rule $r_1$ ($r_2$), by initializing using the lower and upper terminal positions respectively.  
\end{prop}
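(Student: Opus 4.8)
The plan is to name the colored set $\mathcal{C}=\mathcal{C}_L\cup\mathcal{C}_M\cup\mathcal{C}_U$ generated by the three initial sets and their update rules, and to prove $\mathcal{C}=\P$ through the characterization that $\P$ is the unique set with the two properties (i) every move out of one of its cells leaves it, and (ii) every cell outside it has a move into it. Throughout I take $a\le b$ (so that $\min(a,b)=a$, as the description $T_L=\{y<a\}$, $T_U=\{x<a\}$, $T_M=\{x<b,\,y<b\}$ requires); together with $a\ge b/2$ this gives $a\le b\le 2a$. A preliminary check confirms $T_L\cup T_U\cup T_M$ is exactly the terminal set: a position is terminal iff neither $(a,b)$ nor $(b,a)$ is legal (legality of $(a+b,a+b)$ forces that of $(b,a)$), and distributing the two conditions $\{x<a\ \text{or}\ y<b\}$ and $\{x<b\ \text{or}\ y<a\}$ yields precisely $\{x<a\}\cup\{y<a\}\cup\{x<b,\,y<b\}$. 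Because $S$ is invariant under the swap $(x,y)\mapsto(y,x)$, the outcomes are symmetric, $\mathcal{C}_L$ and $\mathcal{C}_U$ are reflections of one another, and it suffices to treat the self-symmetric middle segment together with one outer segment.

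For the reduction I would induct on $x+y$, the base cases being the terminal cells, which are colored and lie in $\P$. Writing $D=(a+b,a+b)$, $e_1=(a,b)$ and $e_2=(b,a)$, the update rules satisfy the identities $r_1=e_1+D$, $r_2=e_2+D$ and $e_1+e_2=D$. Suppose $(x,y)\in\mathcal{C}_M$ was colored from a smaller colored cell $(x',y')$ via $r_1$; by induction $(x',y')\in\P$. Two of the three options of $(x,y)$ are then immediately $\N$, since $(x,y)-e_1=(x',y')+D$ and $(x,y)-D=(x',y')+e_1$ are positions having the $\P$-cell $(x',y')$ as an option. The remaining option is $(x,y)-e_2=(x',y')+2e_1$, and proving it is $\N$ is where the argument genuinely uses both the conditional $\P$-to-$\P$ relations of Lemma~\ref{lem:three_move_PtoP} (specialized to $c=b$, $d=a$) and the explicit band description below; coloring via $r_2$ is symmetric, and each outer segment is a one-rule version of the same computation.

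To control the geometry I would pass to the coordinates $x+y$ and $x-y$. Each move lowers $x+y$ by $a+b$ (for $e_1,e_2$) or by $2(a+b)$ (for $D$), whereas $r_1,r_2$ raise it by $3(a+b)$; hence $\mathcal{C}_M$ lies in the diagonal bands $x+y\in[\,3k(a+b),\,3k(a+b)+2(b-1)\,]$, $k\ge 0$, the $x-y$ coordinate selecting the colored cells inside each band. Stability against the diagonal move $D$ is then automatic, as subtracting $2(a+b)$ sends $x+y$ into a non-band residue. Stability against $e_1,e_2$ and reachability for the uncolored gap cells reduce to verifying that successive bands are separated by the right number of $\N$-diagonals, and that the wedge bounded by the lines of slopes $\frac{2b+a}{2a+b}$ and $\frac{2a+b}{2b+a}$ is tiled by $\mathcal{C}_M$ with the outer rays of $\mathcal{C}_L,\mathcal{C}_U$ abutting it cleanly; these spacing inequalities use $b\le 2a$, the same threshold met in Theorem~\ref{thm:threemoveonedim}.

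I expect the main obstacle to be exactly this interface bookkeeping, not the generic interior. Inside a band the $x+y$-residue argument disposes of the diagonal move and the parent argument disposes of two of the remaining three options; but at the top of a band, namely for the cells with $q_1+q_2\ge a+b$ (which occur once $b\ge a+2$), the coarse $x+y$ argument no longer certifies that an $e_1$- or $e_2$-move leaves $\mathcal{C}$, and one must combine the finer $x-y$ constraint with $a\ge b/2$ both to exclude a return into a band and to show the uncolored cells there reach $\mathcal{C}$. Verifying these finitely many interval inequalities (tedious but routine, of the kind compressed in the proof of Proposition~\ref{prop:OS}) establishes (i) and (ii) on all three segments simultaneously, and the induction closes.
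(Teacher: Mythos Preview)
Your proposal is correct and follows essentially the same induction-on-position strategy as the paper: verify that colored cells are $\P$ via the conditional $\P$-to-$\P$ relations of Lemma~\ref{lem:three_move_PtoP} and that uncolored cells reach colored ones, with the interface bookkeeping acknowledged as the genuine work and deferred as routine. The paper carries this out on the single instance $(a,b)=(1,2)$ with figure-based reasoning (the yellow/brown translated regions) and then asserts that the method generalizes, whereas your passage to $(x+y,x-y)$ coordinates and the resulting band decomposition $x+y\in[3k(a+b),\,3k(a+b)+2(b-1)]$ organizes the same argument uniformly in $a,b$ and isolates more sharply the place where $b\le 2a$ is needed (the top-of-band cells where a single $e_i$-move can land back in the same band).
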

\begin{proof}
We prove it for the case $S=\{(1,2),(2,1),(3,3)\}$. Similar to Section~\ref{sec:2dresemble1d}, one can check that the method of proof generalizes to $a<b\le 2a$. Clearly the initial coloring $\{(x,y) \mid x=0 \text{ or } y=0  \text{ or } x=y=1\}$ as in Figure~\ref{fig:additive} corresponds to the set of terminal \P-positions. Suppose that the updates are correct (i.e., colored if and only if a \P-position) for all positions $\x<\x'$. We must verify that $\x'\in\P$ if and only if cell $\x'$ has been colored. By symmetry, it suffices to justify {\bf green} and {\bf blue}. Suppose first that $\mathrm{color}(\x')=\mathbf{green}$. It was updated from {\bf green}, which is a combination of the moves $(2,1)$ and $(3,3)$. By induction, the cell $\x'-(5,4)+(4,2)=\x'-(1,2)\in \N$. Therefore, the converse of Lemma~\ref{lem:three_move_PtoP} (iii) implies that $\x'\in\P$. 

We claim that it suffices to prove that every position below the line of slope $4/5$ with offset $(1,0)$, and between the terminal green cells and the first green update, is an $\N$-position. Namely, induction can be used as follows: suppose $\x'$ is non-colored below the line of slope $4/5$ with offset $(1,0)$. Then, by induction $\x'-(5,4)\in \N$. Hence, there is a move  $\x'-(5,4)-\s\in \P$. This same move can be used from $\x'$, and clearly, $\x'-\s\in \mathbf{green}$, which by induction is in \P. 

To prove the claim, notice that the leftmost cells in that region are $(3,1)$, $(4,2)$, and $(5,3)$. And $(3,1)-(2,1)\in \P$, $(4,2)-(1,2)\in \P$ and $(5,3)-(3,3)\in\P$. Similarly, every position in the set $\{(x,y) \mid 0<y<4, x\ge y+2\}$ has a move to a green terminal \P-position.

To complete the proof, we next study the middle region. One can check manually that the first non-colored region is a subset of $\N$. The proof continues by induction. If the outcomes of the middle segment are correct until the $k$th update, then we translate the $\N$-positions between the $(k-1)$th and $k$th updates, by using the updates $(4,5)$ and $(5,4)$. These translated positions will be $\N$-positions using the same moves as on the lower level. We have illustrated a way of inductive proof that does not depend on the symmetry of the move set and hence can be used more generally. The yellow and brown cells in the lower left corner are \N-positions by a base case computation. The yellow and brown cells above the third update level of the coloring automaton are copy-pasted from the level just below and are \N-positions by induction. The remaining cells at that level (between colored cell levels) are all covered by the \N-positions from the base case, namely the terminal blue cells correspond to the `difference' of the inductive case and the level to be proved.  
\end{proof}

The word `difference', in the final paragraph of the proof, allows for some overlap, as is often the case of the coloring automaton. 

The cases where $a<b/2$ have more complexity as is evident by Figure~\ref{fig:1551add2552}; see Section~\ref{sec:picopen} for further observations.

\begin{figure}[htbp!]
\begin{center}
\includegraphics[width=5.5cm]{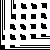}\hspace{5 mm}
\includegraphics[width=5.5cm]{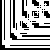}
\end{center}
\caption{These variations of {\sc symmetric additive three-move}, ($a=1, b=5$ and $a=2, b=5$ respectively) have more complexity than the case $a\ge b/2$.
}
\label{fig:1551add2552}
\end{figure}

%\subsection{An additive rule built on an arithmetic progression}
In view of Figure~\ref{fig:coloring123446}, we define a family of {\sc two-dimensional subtraction} that depends on just one parameter. 
\begin{defi}[{\sc arithmetic additive}]
Let $a\ge 1$. Then 
{\small $
S=\{(a,2a),(3a,4a),(4a,6a)\}
$ }is an {\sc arithmetic additive} ruleset. 
\end{defi}
We leave it as an open problem to verify that the coloring scheme updates in Figure~\ref{fig:coloring123446} correspond to the middle segment's \P-positions of {\sc arithmetic additive} $S=\{(1,2),(3,4),(4,6)\}$.  
\begin{figure}[htp!]
\begin{center}
\begin{tikzpicture}[scale = 0.22]
\colorlet{yellow}{gray!60!yellow}
\draw[step=1 cm,yellow] (0, 0) grid (50,50); 
%%%%%%%%%%%%%122133
\foreach \x/\y in {
0/0, 0/1, 0/2, 0/3, 0/4, 0/5, 0/6, 0/7, 0/8, 0/9, 0/10, 0/11, 0/12, 0/13, 0/14, 0/15, 0/16, 0/17, 0/18, 0/19, 0/20, 0/21, 0/22, 0/23, 0/24, 0/25, 0/26, 0/27, 0/28, 0/29, 0/30, 0/31, 0/32, 0/33, 0/34, 0/35, 0/36, 0/37, 0/38, 0/39, 0/40, 0/41, 0/42, 0/43, 0/44, 0/45, 0/46, 0/47, 0/48, 0/49, 1/0, 1/1, 2/0, 2/1, 2/4, 2/5, 2/6, 2/7, 2/8, 2/9, 2/10, 2/11, 2/12, 2/13, 2/14, 2/15, 2/16, 2/17, 2/18, 2/19, 2/20, 2/21, 2/22, 2/23, 2/24, 2/25, 2/26, 2/27, 2/28, 2/29, 2/30, 2/31, 2/32, 2/33, 2/34, 2/35, 2/36, 2/37, 2/38, 2/39, 2/40, 2/41, 2/42, 2/43, 2/44, 2/45, 2/46, 2/47, 2/48, 2/49, 3/0, 3/1, 4/0, 4/1, 5/0, 5/1, 6/0, 6/1, 6/8, 6/9, 7/0, 7/1, 7/8, 7/9, 7/12, 7/13, 7/14, 7/15, 7/16, 7/17, 7/18, 7/19, 7/20, 7/21, 7/22, 7/23, 7/24, 7/25, 7/26, 7/27, 7/28, 7/29, 7/30, 7/31, 7/32, 7/33, 7/34, 7/35, 7/36, 7/37, 7/38, 7/39, 7/40, 7/41, 7/42, 7/43, 7/44, 7/45, 7/46, 7/47, 7/48, 7/49, 8/0, 8/1, 8/8, 8/9, 8/12, 8/13, 9/0, 9/1, 9/8, 9/9, 9/16, 9/17, 9/18, 9/19, 9/20, 9/21, 9/22, 9/23, 9/24, 9/25, 9/26, 9/27, 9/28, 9/29, 9/30, 9/31, 9/32, 9/33, 9/34, 9/35, 9/36, 9/37, 9/38, 9/39, 9/40, 9/41, 9/42, 9/43, 9/44, 9/45, 9/46, 9/47, 9/48, 9/49, 10/0, 10/1, 10/8, 10/9, 11/0, 11/1, 11/8, 11/9, 12/0, 12/1, 12/8, 12/9, 12/16, 12/17, 13/0, 13/1, 13/8, 13/9, 13/16, 13/17, 13/20, 13/21, 14/0, 14/1, 14/8, 14/9, 14/16, 14/17, 14/20, 14/21, 14/24, 14/25, 14/26, 14/27, 14/28, 14/29, 14/30, 14/31, 14/32, 14/33, 14/34, 14/35, 14/36, 14/37, 14/38, 14/39, 14/40, 14/41, 14/42, 14/43, 14/44, 14/45, 14/46, 14/47, 14/48, 14/49, 15/0, 15/1, 15/8, 15/9, 15/16, 15/17, 15/24, 15/25, 16/0, 16/1, 16/8, 16/9, 16/16, 16/17, 16/28, 16/29, 16/30, 16/31, 16/32, 16/33, 16/34, 16/35, 16/36, 16/37, 16/38, 16/39, 16/40, 16/41, 16/42, 16/43, 16/44, 16/45, 16/46, 16/47, 16/48, 16/49, 17/0, 17/1, 17/8, 17/9, 17/16, 17/17, 18/0, 18/1, 18/8, 18/9, 18/16, 18/17, 18/24, 18/25, 19/0, 19/1, 19/8, 19/9, 19/16, 19/17, 19/24, 19/25, 19/28, 19/29, 20/0, 20/1, 20/8, 20/9, 20/16, 20/17, 20/24, 20/25, 20/28, 20/29, 20/32, 20/33, 21/0, 21/1, 21/8, 21/9, 21/16, 21/17, 21/24, 21/25, 21/32, 21/33, 21/36, 21/37, 21/38, 21/39, 21/40, 21/41, 21/42, 21/43, 21/44, 21/45, 21/46, 21/47, 21/48, 21/49, 22/0, 22/1, 22/8, 22/9, 22/16, 22/17, 22/24, 22/25, 22/36, 22/37, 23/0, 23/1, 23/8, 23/9, 23/16, 23/17, 23/24, 23/25, 23/40, 23/41, 23/42, 23/43, 23/44, 23/45, 23/46, 23/47, 23/48, 23/49, 24/0, 24/1, 24/8, 24/9, 24/16, 24/17, 24/24, 24/25, 24/32, 24/33, 25/0, 25/1, 25/8, 25/9, 25/16, 25/17, 25/24, 25/25, 25/32, 25/33, 25/36, 25/37, 26/0, 26/1, 26/8, 26/9, 26/16, 26/17, 26/24, 26/25, 26/32, 26/33, 26/36, 26/37, 26/40, 26/41, 27/0, 27/1, 27/8, 27/9, 27/16, 27/17, 27/24, 27/25, 27/32, 27/33, 27/40, 27/41, 27/44, 27/45, 28/0, 28/1, 28/8, 28/9, 28/16, 28/17, 28/24, 28/25, 28/32, 28/33, 28/44, 28/45, 28/48, 28/49, 29/0, 29/1, 29/8, 29/9, 29/16, 29/17, 29/24, 29/25, 29/32, 29/33, 29/48, 29/49, 30/0, 30/1, 30/8, 30/9, 30/16, 30/17, 30/24, 30/25, 30/32, 30/33, 30/40, 30/41, 31/0, 31/1, 31/8, 31/9, 31/16, 31/17, 31/24, 31/25, 31/32, 31/33, 31/40, 31/41, 31/44, 31/45, 32/0, 32/1, 32/8, 32/9, 32/16, 32/17, 32/24, 32/25, 32/32, 32/33, 32/40, 32/41, 32/44, 32/45, 32/48, 32/49, 33/0, 33/1, 33/8, 33/9, 33/16, 33/17, 33/24, 33/25, 33/32, 33/33, 33/40, 33/41, 33/48, 33/49, 34/0, 34/1, 34/8, 34/9, 34/16, 34/17, 34/24, 34/25, 34/32, 34/33, 34/40, 34/41, 35/0, 35/1, 35/8, 35/9, 35/16, 35/17, 35/24, 35/25, 35/32, 35/33, 35/40, 35/41, 36/0, 36/1, 36/8, 36/9, 36/16, 36/17, 36/24, 36/25, 36/32, 36/33, 36/40, 36/41, 36/48, 36/49, 37/0, 37/1, 37/8, 37/9, 37/16, 37/17, 37/24, 37/25, 37/32, 37/33, 37/40, 37/41, 37/48, 37/49, 38/0, 38/1, 38/8, 38/9, 38/16, 38/17, 38/24, 38/25, 38/32, 38/33, 38/40, 38/41, 38/48, 38/49, 39/0, 39/1, 39/8, 39/9, 39/16, 39/17, 39/24, 39/25, 39/32, 39/33, 39/40, 39/41, 39/48, 39/49, 40/0, 40/1, 40/8, 40/9, 40/16, 40/17, 40/24, 40/25, 40/32, 40/33, 40/40, 40/41, 40/48, 40/49, 41/0, 41/1, 41/8, 41/9, 41/16, 41/17, 41/24, 41/25, 41/32, 41/33, 41/40, 41/41, 41/48, 41/49, 42/0, 42/1, 42/8, 42/9, 42/16, 42/17, 42/24, 42/25, 42/32, 42/33, 42/40, 42/41, 42/48, 42/49, 43/0, 43/1, 43/8, 43/9, 43/16, 43/17, 43/24, 43/25, 43/32, 43/33, 43/40, 43/41, 43/48, 43/49, 44/0, 44/1, 44/8, 44/9, 44/16, 44/17, 44/24, 44/25, 44/32, 44/33, 44/40, 44/41, 44/48, 44/49, 45/0, 45/1, 45/8, 45/9, 45/16, 45/17, 45/24, 45/25, 45/32, 45/33, 45/40, 45/41, 45/48, 45/49, 46/0, 46/1, 46/8, 46/9, 46/16, 46/17, 46/24, 46/25, 46/32, 46/33, 46/40, 46/41, 46/48, 46/49, 47/0, 47/1, 47/8, 47/9, 47/16, 47/17, 47/24, 47/25, 47/32, 47/33, 47/40, 47/41, 47/48, 47/49, 48/0, 48/1, 48/8, 48/9, 48/16, 48/17, 48/24, 48/25, 48/32, 48/33, 48/40, 48/41, 48/48, 48/49, 49/0, 49/1, 49/8, 49/9, 49/16, 49/17, 49/24, 49/25, 49/32, 49/33, 49/40, 49/41, 49/48, 49/49}
\filldraw[color = black, opacity =.1] (\x, \y) rectangle (\x+1, \y+1);%
\draw (-0.7 ,0.35) node {$0$};
%\draw (4, -.5) node {$9$};

\draw (0.35, -0.7) node {$0$};
%\draw (-.5, 4.75) node {$9$};

\foreach \x/\y in {0/0,0/1,7/12,7/13,14/24,14/25,21/36,21/37,28/48,28/49}%,1/0,1/1,2/0,2/1}
 \filldraw[color = red!75!blue] (\x+.2, \y+.2) rectangle (\x+.8, \y+.8);

\foreach \x/\y in {6/8,6/9,8/12,8/13,12/16,12/17,14/20,14/21,13/20,13/21,15/24,15/25,20/28,20/29,19/28,19/29,21/32,21/33,20/32,20/33,18/24,18/25,22/36,22/37,25/36,25/37,26/36,26/37,24/32,24/33,31/44,31/45,32/44,32/45,26/40,26/41,29/48,29/49,30/40,30/41,36/48,36/49,27/40,27/41,32/48,32/49,33/48,33/49,27/44,27/45,28/44,28/45}
 \filldraw[color = blue, opacity =.7] (\x+.2, \y+.2) rectangle (\x+.8, \y+.8);

 \foreach \x/\y in {2/4,2/5,9/16,9/17,16/28,16/29,23/40,23/41}
 \filldraw[color = green!80!black] (\x+.2, \y+.2) rectangle (\x+.8, \y+.8);

\end{tikzpicture}
\end{center}
\caption{The picture illustrates the initial  outcomes (gray) of {\sc arithmetic additive}  $S =\{(1,2),(3,4),(4,6)\}$, together with its coloring for the \P-to-\P\ update rules of the middle segment. $\mathrm{purple}\rightarrow \mathrm{green}(2,4),\mathrm{blue}(6,8)$, $\mathrm{green}\rightarrow \mathrm{ purple}(5,8),\mathrm{blue}(6,8)$ and $\mathrm{blue}\rightarrow \mathrm{blue}(6,8)$. The lower segment has update rule {\rm blue}, with initial coloring $\{(x,0),(x,1)\mid x\ge 1\}$. The upper segment has two update rules: $\mathrm{red}\rightarrow\mathrm{yellow}(2,4)$, $\mathrm{yellow}\rightarrow\mathrm{red}(5,8)$ with initial red coloring $\{(0,y)\mid y\ge 2\}$. The middle segment is bounded away from the lower (upper) segment by a line of slope $4/3$ ($(4+8)/(2+5)=12/7$).}
\label{fig:coloring123446}
\end{figure}
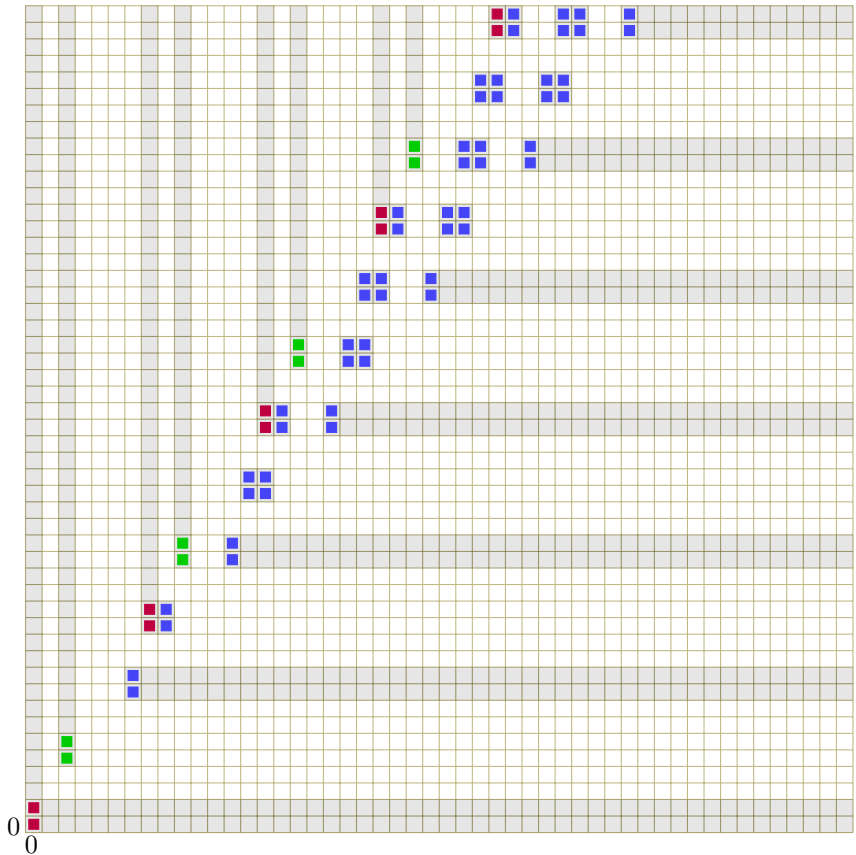

\section{Some open problems}
\label{sec:picopen}
A classification problem emerges from this experimental section. For a given $k\in \n$, which rulesets have exactly $k$ outcome segments? We have run some c-code and found instances for $k=1,\ldots ,6$. See the pictures in this section for $k>3$. Figure~\ref{fig:symasym} illustrates two 4-segmentations (a symmetric and an asymmetric) where the segmentation settles very early. In contrast, we display Figures~\ref{fig:3move4segm_thin} and \ref{fig:3move4segm_fat}, where the eventual segmentation settles a lot slower. In the latter case, a thick part of the middle region turns out not to shape a segment, but a semi-infinite (periodic) strip. We do not currently have any argument or intuition to point out why the latter of those two rulesets has a much wider (lower) middle segment.    Figures~\ref{fig:5segm} and \ref{fig:6segm} display a 5 and a 6-segmentation, respectively, that satisfy the hypothesis of Conjecture~\ref{conj:maxsegm}.  In Figures~\ref{fig:chaos} and \ref{fig:chaos2}, we display a ruleset for which we believe that ``outcome segmentation'' is not the correct concept.

Recall the pictures in Figure~\ref{fig:1551add2552}. We show a bit further computation of those pictures in Figure~\ref{fig:1525_200}. By visual inspection, it is easy to see that both settle in periodic patterns. In view of an early question: Yes, we do get some more complexity if $2a<b$, but the patterns  do not seem to transform into irregularity. 
\begin{figure}[htp!]
\begin{center}
\includegraphics[width=7cm]{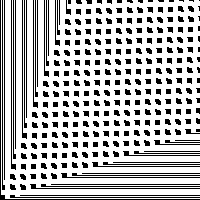}\hspace{2 mm}
\includegraphics[width=7cm]{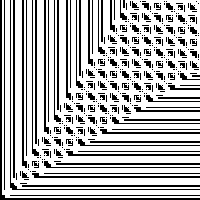}
\end{center}
\caption{Recall the variations of {\sc symmetric additive three-move} from Figure~\ref{fig:1551add2552}. This time we have computed the outcomes of the first 200 by 200 positions. Visual inspection proves periodic patterns. 
}
\label{fig:1525_200}
\end{figure}
These examples together with many other similar experimental results lead us to the following conjecture. 

\begin{conj}[{\sc three-move} Segmentation]\label{conj:threesegm}
If $|S|=3$, there is an outcome segmentation.
\end{conj}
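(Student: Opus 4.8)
The plan is to mirror, for three moves, the two-move solution of Theorem~\ref{thm:2m2dim}: first decompose the board into finitely many wedges on which only a bounded amount of transverse information is needed, then describe each wedge by a coloring automaton whose update rules are the partial \P-to-\P\ translations of Lemma~\ref{lem:three_move_PtoP} (and their asymmetric and twin analogues, Lemmas~\ref{lem:three_move_sum} and~\ref{Lemma_with_twin}). For $S=\{\s_1,\s_2,\s_3\}$ the candidate segment boundaries are the rays from the origin with the rational slopes determined by the three pairwise sums $\s_i+\s_j$ and, where needed, the single moves $\s_i$; these are exactly the directions along which the translations $t_{ij}(\x)=\x+\s_i+\s_j$ act. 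I would order these slopes and take the open wedges between consecutive rays as the candidate outcome segments $\Gamma_1,\dots,\Gamma_k$, with $k\le\binom{3}{2}+3=6$, matching the experimental range $k=1,\dots,6$.

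Within a wedge $\Gamma$ the proposed automaton has finitely many colors (a phase bookkeeping which pair-sum is the ``active'' translation, plus the residue of a bounded local pattern), is initialized on the near-axis terminal strip or on the interface with the neighbouring segment, and is propagated by the relevant $t_{ij}$. Correctness is the assertion that a cell of $\Gamma$ is colored if and only if it is a \P-position, proved by induction on the standard partial order exactly as in Propositions~\ref{prop:symadd} and~\ref{prop:OS}. The \P-to-\P\ direction is supplied verbatim by the corresponding item of Lemma~\ref{lem:three_move_PtoP}: a colored cell is placed precisely because its two governing moves reach \N-cells, so the remaining (smaller) move is forced to a \P-cell. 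The \N-to-\P\ direction is the finite local check that every non-colored cell of $\Gamma$ has some move to a colored cell; in the worked examples this reduces to a bounded base computation that is then copy-pasted up the wedge by the same translations.

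To turn this picture into a genuine segmentation I would lean on eventual row and column periodicity, Theorem~\ref{thm:rowperiod2d}, which already gives, for each row $j$, a preperiod $i'(j)$ and a period $p_j$. What a segmentation additionally demands is \emph{structure} in these data: that the preperiods $i'(j)$ grow asymptotically linearly in $j$ (so that the transition lines are the rational-slope boundaries of the $\Gamma_i$), and that within each wedge the period $p_j$ is eventually constant. Granting these two facts, the wedges carry purely periodic transverse patterns, disjointness follows because distinct pair-sum slopes meet only in a density-zero interface strip, and the $n-o(n)$ covering condition of Definition~\ref{def:outcomesegmentation} is immediate.

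The hard part is precisely the stabilization of the transverse pattern along each wedge, i.e.\ proving that $p_j$ and the local configuration settle rather than drift. Unlike the two-move case no exact translation invariance is available (Lemma~\ref{lem:PtoP} fails for a pair once a third move is present), and adding a move is genuinely non-monotone on outcomes, so the third move can interfere deep inside a wedge; this is the two-dimensional shadow of the still-open one-dimensional period-length problem for {\sc three-move} (cf.\ the $S_5=\{2,5,7\}$ anomaly with period $22$). I would attack it by a window/pigeonhole argument transverse to the active direction $\s_i+\s_j$, an anti-diagonal analogue of the row-window argument in the proof of Theorem~\ref{thm:rowperiod2d}, bounding the number of transverse outcome patterns of bounded width and using the partial update rules to force a repeated pattern to recur periodically. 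Since the authors expect some rulesets to be irregular, a realistic route is to first establish this stabilization for the structured subfamilies (additive, Definition~\ref{def:addsym}; asymmetric additive; arithmetic additive), where the governing pair in each wedge is explicit, and only then attempt the uniform transverse bound that would settle the full three-move conjecture.
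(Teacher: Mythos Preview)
The statement you are attempting to prove is a \emph{conjecture} in the paper, not a theorem: the authors offer no proof, only experimental evidence (Figures~\ref{fig:symasym}, \ref{fig:3move4segm_thin}, \ref{fig:3move4segm_fat}, \ref{fig:1525_200}) and the remark that ``these examples together with many other similar experimental results lead us to the following conjecture.'' So there is no paper proof to compare against, and your write-up is, as you yourself frame it, a plan rather than a proof.

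The plan is reasonable as a research outline, but it has a genuine gap that you identify and do not close: the ``stabilization of the transverse pattern along each wedge.'' Two concrete obstacles make your proposed transverse pigeonhole argument more delicate than the row argument of Theorem~\ref{thm:rowperiod2d}. First, Lemmas~\ref{lem:three_move_PtoP} and~\ref{lem:three_move_sum} are \emph{conditional}: each says ``if such-and-such is $\N$ then another position is $\P$,'' so they do not furnish an unconditional update rule for the automaton the way Lemma~\ref{lem:PtoP} does for two moves. Whether the hypothesis holds at a given cell depends on outcomes that may lie in a \emph{different} wedge, so the wedges are not autonomous and your induction cannot stay inside a single $\Gamma$. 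Second, the row argument works because a row depends on only boundedly many rows below; a window transverse to $\s_i+\s_j$ has no such bounded dependency unless you already know the neighbouring wedge is periodic with a compatible period, which is exactly what you are trying to prove. The paper's worked cases (Propositions~\ref{prop:symadd} and~\ref{prop:OS}) succeed only because the base case can be checked by hand and the conditional lemmas happen to fire; the authors give no mechanism, and you have not supplied one, that guarantees this for an arbitrary three-move ruleset. Your final paragraph effectively concedes this by retreating to structured subfamilies, which is honest but leaves Conjecture~\ref{conj:threesegm} open.
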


\begin{conj}[Maximum Segmentation]\label{conj:maxsegm}
 For each $k\in \n$, $k\ne 2$, there exists a ruleset of size $k$ that has a $(k+1)$-segmentation. For each $k\in \n$, there is no ruleset of size $k$ that has a $(k+2)$-segmentation.
\end{conj}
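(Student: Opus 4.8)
The statement has two independent halves, and I would attack them separately. For the existence half I would build, for each $k\ne 2$, an explicit ruleset of size $k$ whose $k$ moves have $k$ distinct slopes $b_i/a_i$, chosen (as in the symmetric four-segment example of Figure~\ref{fig:symasym}) so that these slopes, together with the mediant directions forced by the $\P$-to-$\P$ combinations of Lemma~\ref{lem:PtoP}, cut the first quadrant into exactly $k+1$ angular sectors. The guiding heuristic is that $k$ rays emanating from near the origin partition the quadrant into $k+1$ regions, and in each region a single move, or a single $\P$-to-$\P$ combination, dominates the recursion and produces a regular tessellation. The small case $k=1$ (an $\mathrm L$-shape, which already splits into the two strip-families of Lemma~\ref{lem:onemovestruct}) must be checked by hand, and the exclusion of $k=2$ is not an accident: Theorem~\ref{thm:2m2dim} shows that two moves organize around the \emph{single} mediant slope $\delta=(b_1+b_2)/(a_1+a_2)$, so the board is cut into only the two regions $\Gamma$ and $\Gamma^{-1}$ and a third segment can never appear.

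To turn each candidate family into a genuine outcome segmentation in the sense of Definition~\ref{def:outcomesegmentation}, I would reuse the coloring-automaton machinery of Section~\ref{sec:outcomesegment}. For each sector I would identify the terminal $\P$-positions that seed the segment, take as update rules the $\P$-to-$\P$ combinations $\s_i+\s_j$ whose direction lies inside that sector, and then prove by induction on the partial order that the colored cells coincide with the $\P$-positions, exactly as in Proposition~\ref{prop:symadd} and Proposition~\ref{prop:OS}. The inductive step amounts to checking that from every colored cell the \emph{omitted} moves reach $\N$-positions and that every uncolored cell in the sector has a move to a colored $\P$-cell; this is the same bookkeeping carried out, segment by segment, in those two propositions.

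For the upper-bound half I would argue that the boundaries between segments can occur only along directions that are extremal among the moves. The key observation is that every $\P$-to-$\P$ direction $\s_i+\s_j$, and more generally every nonnegative integer combination of the moves, is a weighted mediant of the move slopes and therefore lies in the closed interval $[\min_i b_i/a_i,\ \max_i b_i/a_i]$; no combination produces a direction more extreme than the moves themselves. Hence the asymptotic directions around which the $\P$-positions can reorganize are confined to at most the $k$ distinct move-slopes, which partition the quadrant into at most $k+1$ sectors, forbidding a $(k+2)$-segmentation. Combined with the collapse-to-the-mediant phenomenon for $k=2$, this yields the desired ceiling.

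The main obstacle lies entirely in making the upper-bound heuristic rigorous, because it must hold for \emph{every} ruleset of size $k$, including those whose outcomes may not even be regular (the paper conjectures such rulesets exist; cf.\ Figure~\ref{fig:chaos}). I do not have a structural theorem asserting that segment boundaries must align with move-slopes, and establishing one would require controlling precisely how the recursively defined $\P$-set can fail to be periodic inside a sector, while ruling out spurious boundaries generated by interference between overlapping $\P$-to-$\P$ translations. A secondary difficulty is that even the existence half leans on Conjecture~\ref{conj:threesegm}-style regularity: the coloring-automaton induction has so far been verified only case by case, so a uniform treatment covering an infinite family of sizes $k$ would demand a general correctness criterion for the automaton in place of the hand-tuned arguments of Propositions~\ref{prop:symadd} and~\ref{prop:OS}.
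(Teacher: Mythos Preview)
This statement is labeled \textbf{Conjecture} in the paper and is left entirely open: the paper offers no proof, only experimental evidence (Figures~\ref{fig:symasym}, \ref{fig:5segm}, \ref{fig:6segm} for the existence half) and the surrounding discussion. So there is no ``paper's own proof'' to compare against; any correct argument you produce would be new mathematics.

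That said, your upper-bound heuristic contains a genuine gap, not merely an unfinished step. You argue that every $\P$-to-$\P$ direction $\s_i+\s_j$ lies in the interval $[\min_i b_i/a_i,\max_i b_i/a_i]$ and then conclude that ``the asymptotic directions around which the $\P$-positions can reorganize are confined to at most the $k$ distinct move-slopes.'' The premise is true but the conclusion does not follow: boundedness of the set of possible boundary slopes says nothing about its cardinality. In fact the paper's own examples contradict the claim that boundaries sit at move slopes. For {\sc two-move} the unique boundary is at the mediant slope $\delta=(b+d)/(a+c)$, not at either move slope; and for $S=\{(1,2),(2,3),(3,1)\}$ the boundaries have slopes $9/8$ and $4/5$ (Example~\ref{ex:lines}), neither of which is a move slope---they are slopes of \emph{combinations} $(5,4)=(2,3)+(3,1)$ and $(8,9)=(3,5)+(5,4)$. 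Since there are $\binom{k}{2}$ pairwise sums, your line of reasoning would at best yield $\binom{k}{2}+1$ segments, which is far weaker than the conjectured $k+1$. Any proof of the upper bound will need a mechanism that explains why most of these combination directions do \emph{not} create new segment boundaries, and that mechanism is absent here.

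For the existence half your plan is reasonable as a template, and you are right that the paper's case-by-case verifications (Propositions~\ref{prop:symadd} and~\ref{prop:OS}) are the only tools currently available; but note that even the specific examples in Figures~\ref{fig:5segm} and~\ref{fig:6segm} are not proved to be segmentations in the paper, so you would be starting essentially from scratch for $k\ge 4$.
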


\begin{figure}[htbp!]
 \begin{center}
 \includegraphics[width=9.9cm]{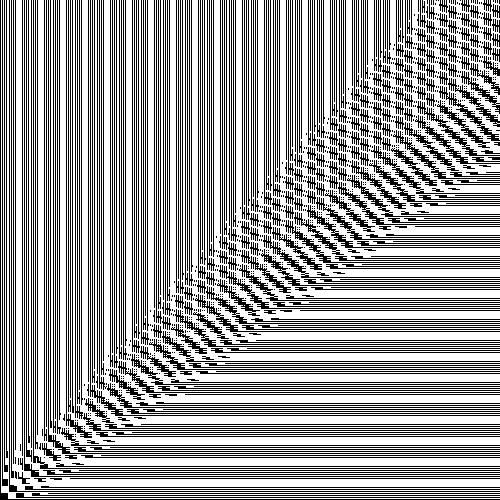}
 \vspace{2 mm}
  \includegraphics[width=9.9cm]{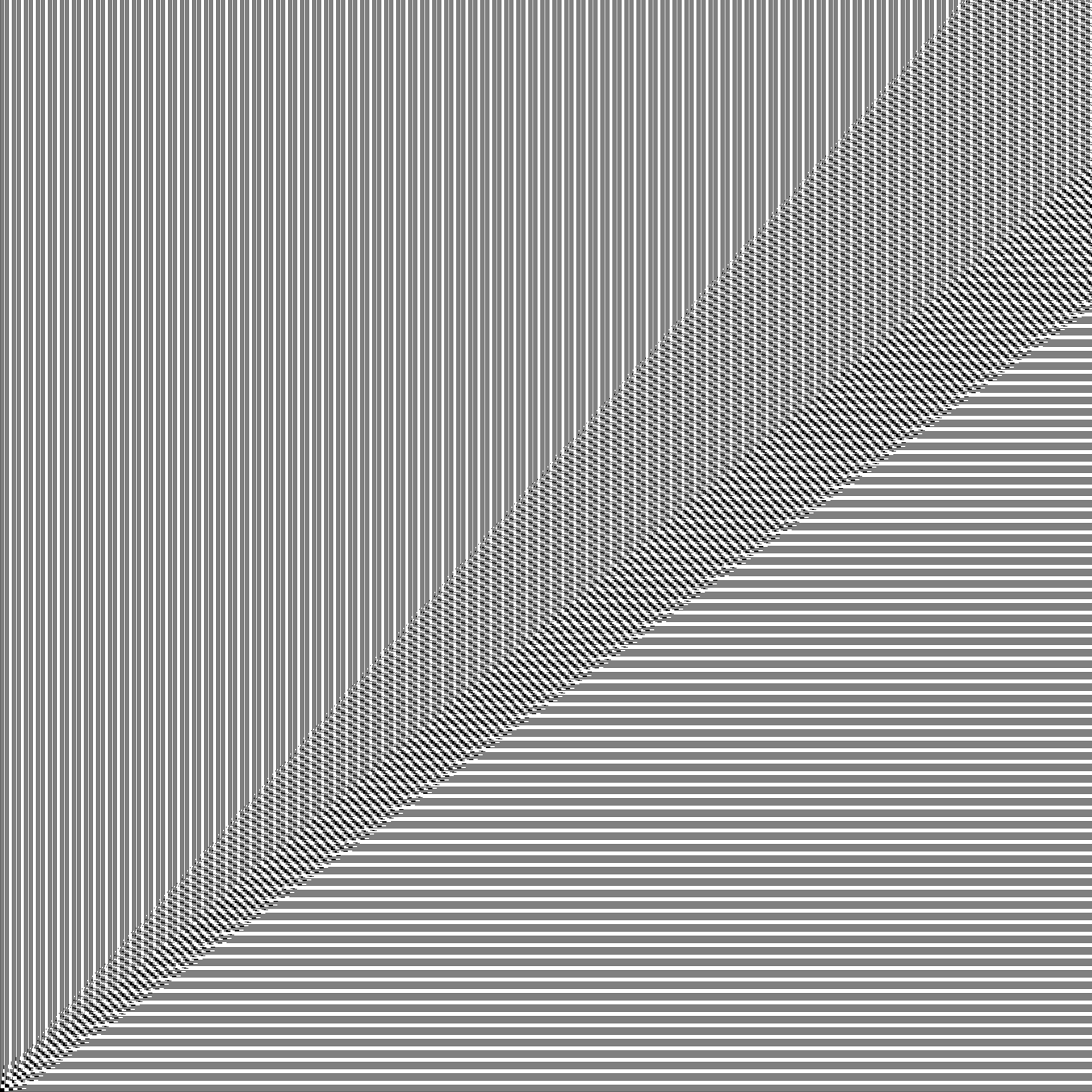}
 \end{center}
 \caption{A delayed 4-segmentation arising out of the three-move game $S=\{(1,7),(8,1),(14,14)\}$, 500 by 500 positions on top of 2000 by 2000 positions.}
 \label{fig:3move4segm_thin}
 \end{figure}
 
\begin{figure}[htbp!]
 \begin{center}
\includegraphics[width=9.9cm]{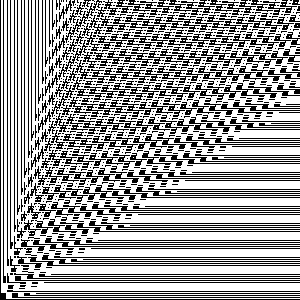}
\vspace{2 mm} \includegraphics[width=9.9cm]{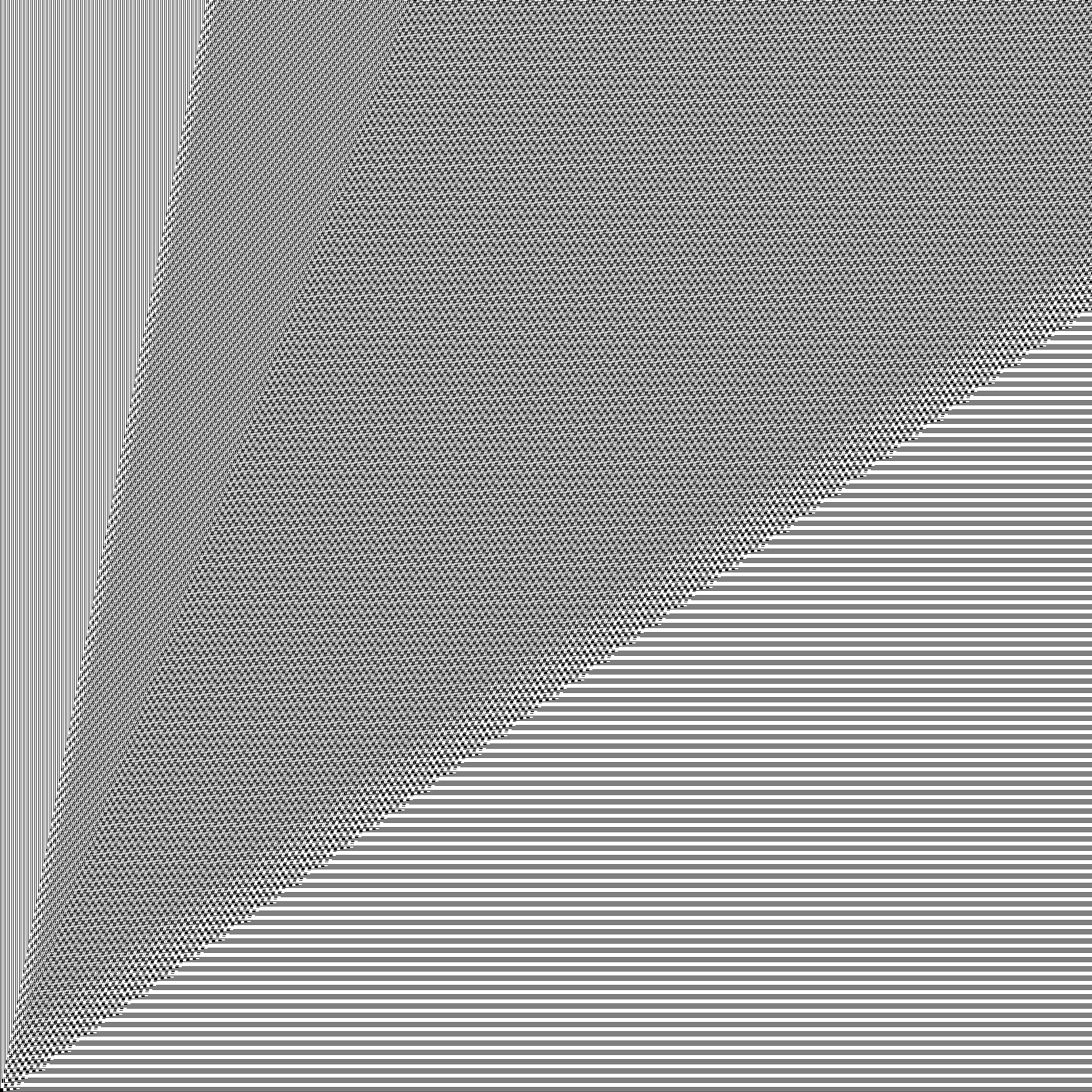}
 \end{center}
 \caption{A 4-segmentation arising out of the three-move game $S=\{(1,7),(2,10),(6,1)\}$, 300 by 300 positions on top of 2000 by 2000 positions.}
 \label{fig:3move4segm_fat}
 \end{figure}
 
 \begin{figure}[htbp!]
 \begin{center}
 \includegraphics[width=\textwidth]{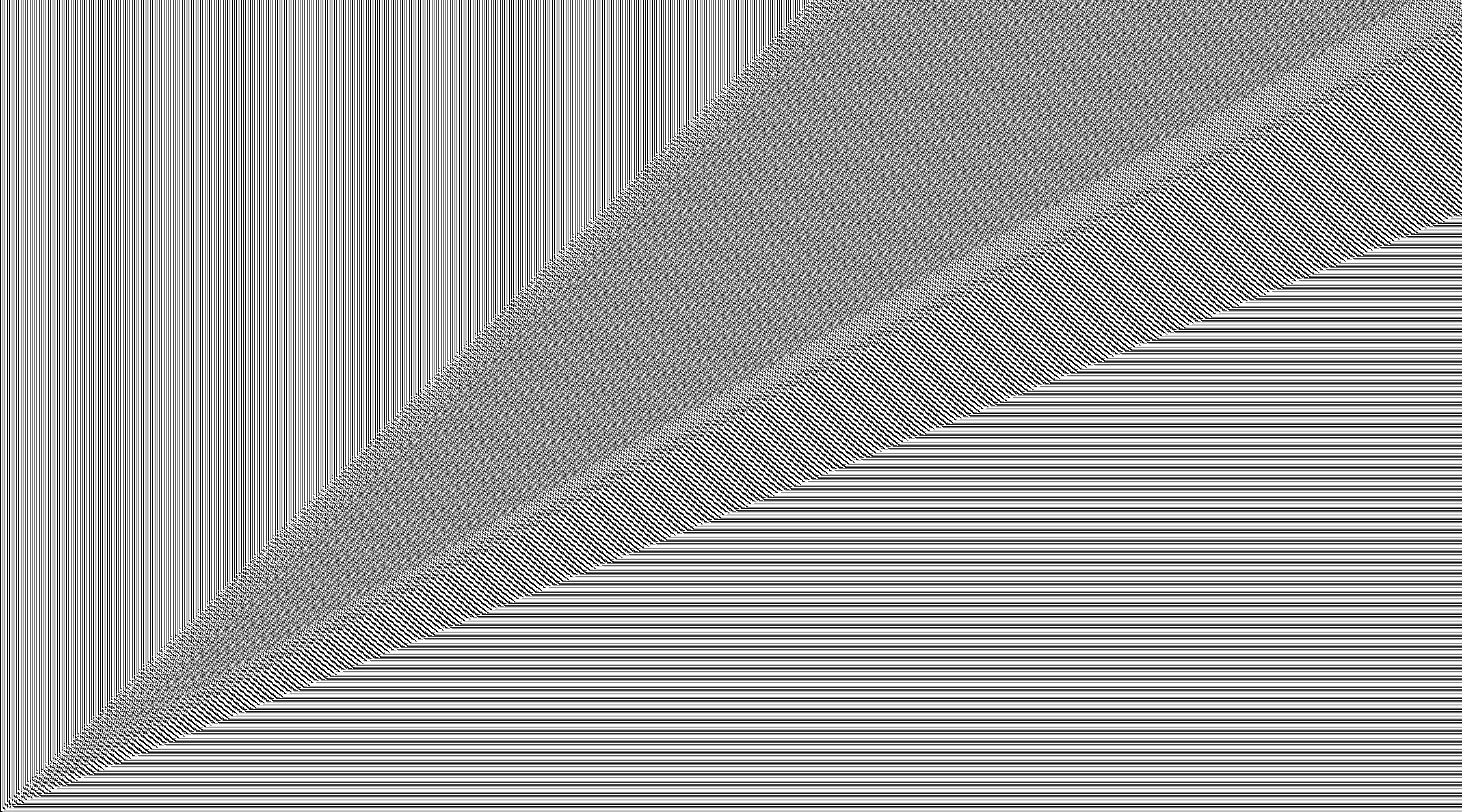}
 \end{center}
 \caption{A 5-segmentation arising out of the 4-move game $S=\{(2,6),(3,3),(6,1),(19,6)\}$ (3600 by 2000 positions).}
 \label{fig:5segm}
 \end{figure}
 \begin{figure}[htbp!]
 \begin{center}
 \includegraphics[width=\textwidth]{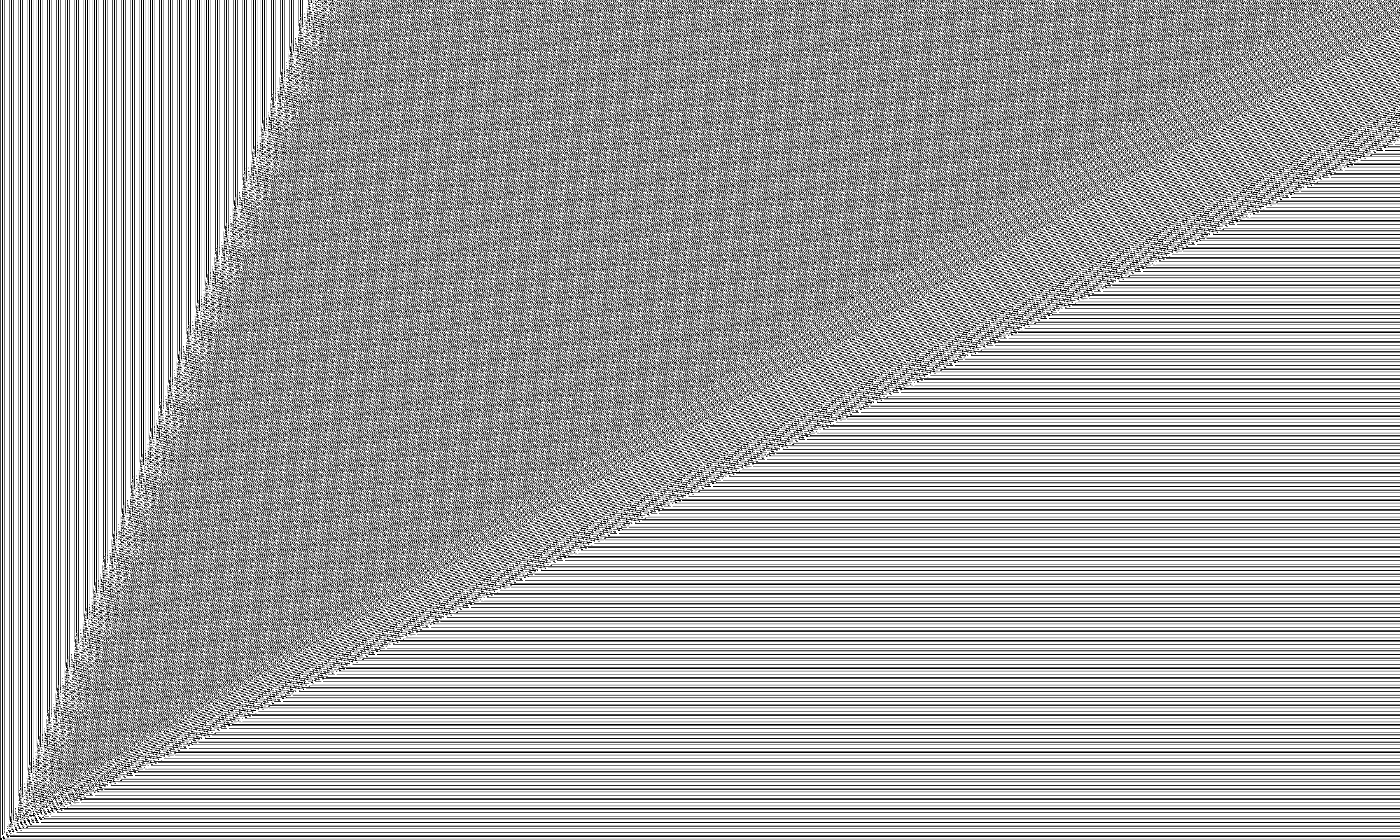}
 \end{center}
 \caption{A 6-segmentation arising out of the 5-move game $S=\{(2,6),(4,11),(6,1),(6,3),(19,5)\}$ (5000 by 3000 positions).}
 \label{fig:6segm}
 \end{figure}

Some rulesets seem harder than others. We have computed the outcomes of the game $$S=\{(0,1), (1,0), (1,1), (1,2), (2,2), (2,51), (4,3), (4,4), (13,1)\}$$ up to 20000 by 20000; see Figure~\ref{fig:chaos} for the first few outcomes. And the patterns do not seem to stabilize; see Figure~\ref{fig:chaos2}. Why does every {\em glider} in the picture have the same slope? If true, this would contribute to a simpler solution, than if the slopes of outgoing gliders were different (as for example in the rule 110 CA updates)? At least, a linear search (of bounded with) would suffice to identify the full two-dimensional pattern in the region of the gliders. Still, an arbitrary line of rational slope does not appear to define periodic outcomes, so the ruleset is not 2-d eventually periodic (see Definition~\ref{def:2devper}). This ruleset certainly seems more complicated than those that satisfy outcome segmentation.

The next problem concerns the outcome patterns. 
\begin{problem}\label{prob:percolation}
This is a question related to percolation. When does an outcome segment for given rulesets {\em \N-percolate} (defined as an infinite path of connected \N-positions)? For example, in Figure~\ref{fig:additive},  the middle segment of the right-most picture satisfies \N-percolation whereas the left-most does not. In Figure~\ref{fig:3move4segm_fat} both middle segments percolate, whereas in Figure~\ref{fig:3move4segm_thin} only the upper middle segment percolates.
\end{problem}

In all our examples where we define a coloring scheme, we were able to set the initial colors at the terminal \P-positions of the ruleset. We have not yet studied a more advanced case, such as in the 4-segmentations in Figures~ ~\ref{fig:3move4segm_thin} and~\ref{fig:3move4segm_fat}, where the patterns that (visibly) define the outcome segments start later.

\begin{problem}
    Study a coloring scheme for a ruleset where outcome segmentation does not start at the origin, e.g. Figures~\ref{fig:3move4segm_thin} and~\ref{fig:3move4segm_fat}.
\end{problem}

The following generalizes the notion of ``symmetric ruleset'' as defined in \cite{Flammenkamp_1997} to several dimensions. A ruleset $S$ is {\em max-symmetric} if there is a unique maximum $\s_{\max}$ such that, if $\s\in S\setminus\{\s_{\max}\}$, then $\s_{\max}-\s\in S$. Such rulesets have fewer \P-to-\P~ update rules, which seems to weaken tendencies to pattern formation.
See Figures~\ref{fig:maxsym1},~\ref{fig:maxsym2},~ \ref{fig:maxsym3} and \ref{fig:maxsym4} for some examples. Note that the same-slope-glider formation in Figure~\ref{fig:chaos2} does not seem to happen for {\sc five-move max-symmetric}. There appear to be two outer outcome segments. However, the middle segments do not stabilize to any visible regularity. 
We write $S_{\mathrm{ms}}$ for a max-symmetric ruleset, where max-symmetric moves have been omitted. For example $S=\{(1,7),(7,1),(9,3),(3,9),(10,10)\}=\{(1,7),(7,1),(10,10)\}_{\mathrm{ms}}$.
\begin{problem}
    Study max-symmetric rulesets.
\end{problem}

Guo and Miller \cite{guo2011lattice} develop a generalization of disjunctive sum called {\em square free} lattice games. Their heap monoids count the number of heaps of each size, say $x_i=$ `the number of heaps of size $0\le i\le n$'. By moving, at least one of the $x_i$s must decrease, and in particular the count of the maximum heap size that is affected by a move must decrease (the number of smaller heaps of each size may increase). A ruleset is square free if no $x_i$ can decrease by more than one unit in a single move. For example, if there are two heaps of size 7 then after moving, at least one of these heaps remains unaffected. 
Within their framework, this turns out to be a very successful restriction, when it comes to normal-play. (Their framework generalizes both mis\`ere and normal-play.) Namely in this case, they prove that the set of \P-positions is a finitely generated affine stratification. And in more generality, they conjecture that, provided the lattice game is square free, but the set of so-called `defeated positions' may vary, then this continues to hold. From our perspective, and within our (quite different) framework, the coloring schemes may be viewed as some type of `affine stratification' on the set of \P-positions. But we do not yet have any simple proviso that gives a fairly general class of rulesets for which the coloring schemes provide a finite `affine stratification'. Note however that lattice games, in our terms, are non-invariant rulesets. Namely, the removal from a heap may depend on whether there are other heaps of the same size. Moreover, their square free restriction is natural because they study heap monoids. Such a restriction may be imposed in our setting as well (if we pick a large dimension), but it feels quite artificial. Hence, we cannot hope to easily adapt any of their techniques. But there are also similarities. Let us phrase an open problem, which is inspired by this discussion.

\begin{problem}
\label{prob:proviso}
    Develop a simple proviso for {\sc two-dimensional finite vector subtraction} for which coloring schemes give well defined outcome segmentations.
\end{problem}
For example, could we have a proviso $|S|=3$?

\begin{problem}
\label{prob:OS}
Give necessary and/or sufficient conditions such that a finite ruleset $S$ forms an outcome segmentation.
\end{problem}

There are several other differences in our frameworks; in particular Guo and Miller \cite{guo2011lattice} allow splitting of heaps similar to octal games e.g. {\sc Kayles} and {\sc Dawson's Chess}. 
And they generalize both mis\`ere-  and normal-play, etc. Their heap monoids are capable to model any impartial ruleset, in terms of lattice game quotients (similar to nimbers that model any normal play impartial game). In this spirit, recall also the finitely generated mis\`ere quotients by Plambeck and Siegel \cite{PlambeckSiegel2008}. We do not aspire such generality, but we simply want to learn more about the outcomes of  {\sc vector subtraction}. For example, is there a difference between two-dimensional eventual periodicity of outcomes and outcome segmentation? 

%%%%%%%%%%%%%%%%%%%%%%%%%%
 \begin{defi}[2-d Eventual Periodicity]\label{def:2devper}
 \label{def:halflineperiodicity}
 Consider a two-dimensional ruleset $S$, let $p,q$ be given integers, with $p,q>0$, and let $m$ be a given rational. Let $f(x)=px/q+m$ be a function defined on the integers. If $(x, f(x))$ has infinitely many lattice points, then the outcomes on this (discrete) line are eventually  periodic if  there exist  $T >0$ and   $x'\ge 0$ with the following properties 
 \begin{itemize}
\item  $f(x+T)$ is an integer whenever $f(x)$ is an integer.  
\item for all $x\ge x'$, $o(x,f(x)) = o(x+T, f(x+T))$.
\end{itemize}
     If all such lines have eventually periodic outcomes, then the ruleset $S$ is eventually periodic.  If, for all $p,q,m$, both $T$ and $x'$ can be upper bounded, then $S$ is strongly eventually periodic.
\end{defi}

\begin{conj}
\label{conj:periodic}
Not all finite 2-d rulesets are eventually periodic.
\end{conj}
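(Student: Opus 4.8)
The plan is to prove the conjecture by exhibiting a single finite two-dimensional ruleset $S$ together with one line of rational slope along which the outcomes fail to be eventually periodic. Since Theorem~\ref{thm:rowperiod2d} already guarantees that every row and every column is eventually periodic, any witness to non-periodicity must live on a genuinely diagonal line. Moreover, because {\sc finite vector subtraction} is decidable — indeed EXP-complete by \cite{Gurvich} — one cannot hope to derive non-periodicity from an undecidability argument, as one might for the Turing-complete \emph{extension} of \cite{larsson2013impartial,larsson2013heaps}. This forces a \emph{direct construction}: I would build a ruleset whose \P-positions, read along a fixed diagonal direction, encode a provably aperiodic integer-indexed sequence.

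The construction I would attempt is guided by the experimentally observed \emph{glider} phenomenon (Figures~\ref{fig:chaos} and~\ref{fig:chaos2}). The idea is to choose the moves so that isolated \P-positions propagate as diagonal ``signals'' of a common slope, and so that the collision of two signals implements one production step of a substitution (self-similar) system whose fixed point is aperiodic — for instance the Thue--Morse word or a length-changing substitution of Fibonacci type. Concretely, one designates certain moves to translate a \P-pattern rigidly along the glider direction (mimicking the \P-to-\P\ translation of Lemma~\ref{lem:PtoP} and Definition~\ref{def:trans}), and other moves to create the branching that realizes the substitution. The target line is then taken parallel to the glider slope, so that its sequence of outcomes reproduces the substitutive word and inherits its aperiodicity.

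The technical heart of the proof is to verify that the actual outcome labelling of $S$ coincides with the intended self-similar pattern. This would be carried out by induction on positions, in the style of the coloring-automaton arguments already used here (Proposition~\ref{prop:symadd} and Proposition~\ref{prop:OS}): one establishes the \P-to-\N\ direction by checking that every designated \P-position has all of its options among the designated \N-positions, and the \N-to-\P\ direction by exhibiting, for every designated \N-position, a single move to a designated \P-position. Once the labelling is pinned down, non-periodicity along the chosen line follows purely combinatorially from the fact that the underlying substitutive sequence has no eventual period.

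The main obstacle is closing this induction. Unlike the Turing-complete extension, base {\sc finite vector subtraction} offers only the rigid local rule ``a position is in \P\ if and only if all of its finitely many options are in \N'', with no auxiliary memory; every signal, collision, and substitution step must be engineered entirely through the geometry of a fixed finite move set. The real difficulty is preventing \emph{leaks} — spurious \P-positions created where two unrelated signals happen to align — since a single unintended \P-position can cascade and destroy the self-similar structure. Controlling which signals collide, and when, using only a fixed finite move set is what I expect to be genuinely hard; a complete proof stands or falls on showing that the move set produces exactly the collisions dictated by the substitution, and no others, after which the aperiodicity of the fixed point yields the desired non-periodicity along the chosen line.
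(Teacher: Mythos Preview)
The statement you are trying to prove is labelled \emph{Conjecture} in the paper, and the paper does \emph{not} prove it. The authors only offer experimental support: the nine-move ruleset of Figures~\ref{fig:chaos} and~\ref{fig:chaos2} is computed out to $20000\times 20000$, the resulting glider patterns ``do not seem to stabilize'', and the authors remark that along an arbitrary line of rational slope the outcomes do not appear to be eventually periodic. That is the full extent of the paper's treatment --- heuristic evidence, no proof. So there is no paper-side argument to compare your proposal against.

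Your proposal, in turn, is not a proof either, and you say so yourself. What you have written is a research plan: encode an aperiodic substitutive word (Thue--Morse or Fibonacci) in the \P-positions along a diagonal by engineering glider propagation and collisions with a finite move set, then verify the intended labelling by a coloring-automaton induction in the style of Propositions~\ref{prop:symadd} and~\ref{prop:OS}. The plan is coherent and well motivated by the paper's own observations, but the step you flag as ``the main obstacle'' --- ruling out spurious \P-positions (leaks) so that only the intended collisions occur --- is the entire content of the problem. You have not exhibited a concrete ruleset $S$, let alone carried out the induction for it, and the analogy with the Turing-complete \emph{extension} of \cite{larsson2013impartial,larsson2013heaps} gives no leverage here precisely because, as you note, the base model has no auxiliary memory. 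Until a specific $S$ is written down and the leak-free induction is actually closed, this remains a strategy outline rather than a proof; it neither resolves the conjecture nor goes beyond what the paper already suggests informally.
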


\begin{problem}
\label{Prob:regularitystrong}
    Are all 2-d eventually periodic rulesets strongly eventually periodic?
\end{problem}

\begin{problem}
\label{Prob:regularityprop}
    Prove some regularity property, such as 2-d eventual periodicity, on coloring schemes.
\end{problem}

\begin{problem}
\label{probcolorscheme}
Why does a coloring scheme give borders of rational slopes?
\end{problem}

%%%%%%%%%%%% Line Detection Discussion%%%%%%%%%%%%%%%%%%
%As mentioned, experimental observations suggest some rulesets have regular outcomes. 
Motivated by the many observations of outcome segmentation, Adhikari  \cite{adhikari2024} proposes an algorithm that determines the slopes of the boundaries of the outcome segments. The algorithm uses variance and entropy filters, and a Hough line transform, and parameters have to be set manually to explore any given boundary. 

%The algorithm first uses variance and entropy filters to detect edges in the images by appropriately choosing the parameters of filters. Then, it applies Hough line transform techniques to identify prominent straight lines and determine their slopes. We pose a open problem related to detection of outcome segments.

\begin{problem}
    Given an outcome segmentation, develop an algorithm that automatically determines the number of outcome segments together with the corresponding boundary slopes. %and slopes of the straight lines for a given image of two-dimensional rulesets outcome.
\end{problem}

\begin{problem}
    Develop an algorithm that gives a  yes/no answer to the question:  does the ruleset generate an outcome segmentation?
\end{problem}

Of course, there might not exist such an algorithm, which leads us to the final question. 

\begin{problem}
\label{prob:turingcomplete}
    Is {\sc two-dimensional finite subtraction} Turing complete?
\end{problem}

\begin{figure}[htbp!]
 \begin{center}
 \includegraphics[width=\textwidth]{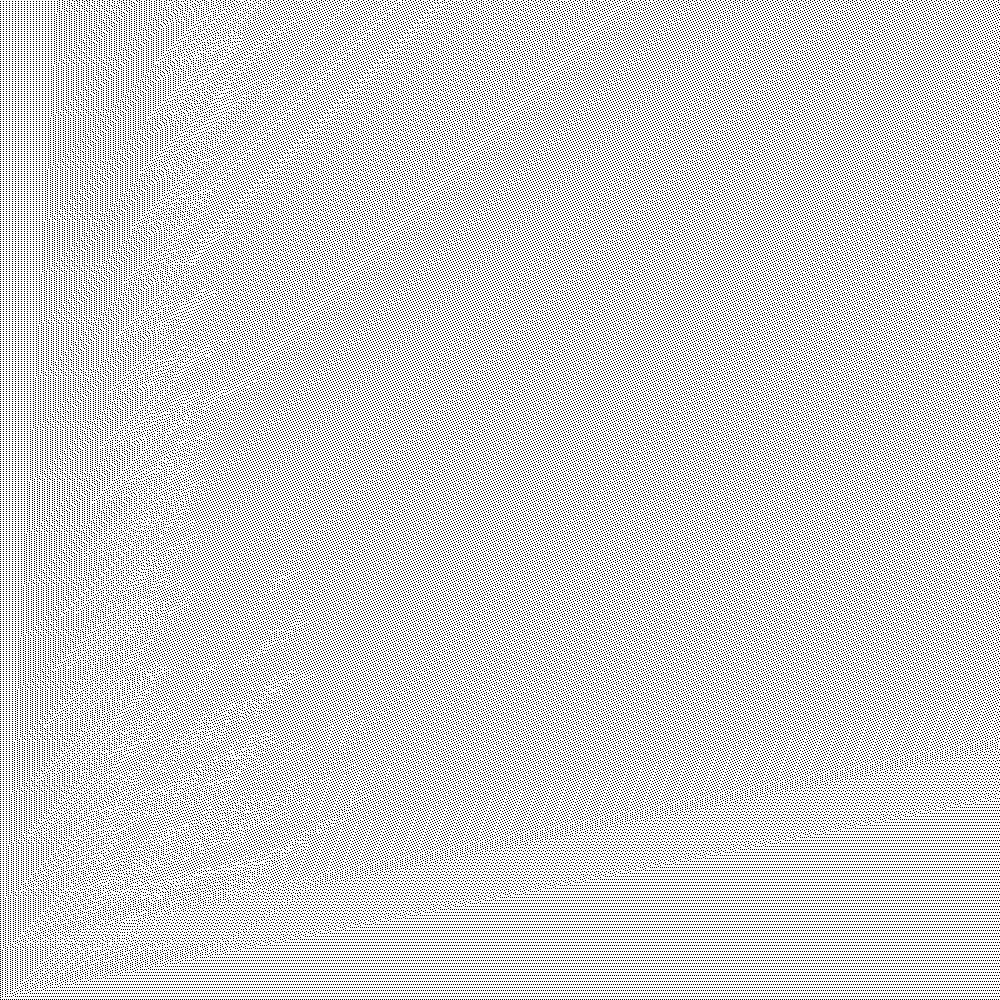}
 \end{center}
 \caption{The initial outcomes of the game $S=\{(0,1), (1,0), (1,1), (1,2), (2,2), (2,51), (4,3), (4,4), (13,1)\}$ %$S=\{(1,0), (0,1), (1,1), (2,2), (4,4), (2,51), (13,1), (4,3), (1,2)\}$ 
 (1000 by 1000 positions).}
 \label{fig:chaos}
 \end{figure}
 
 \begin{figure}[htbp!]
  \begin{center}
  %\includegraphics[width=14cm]%{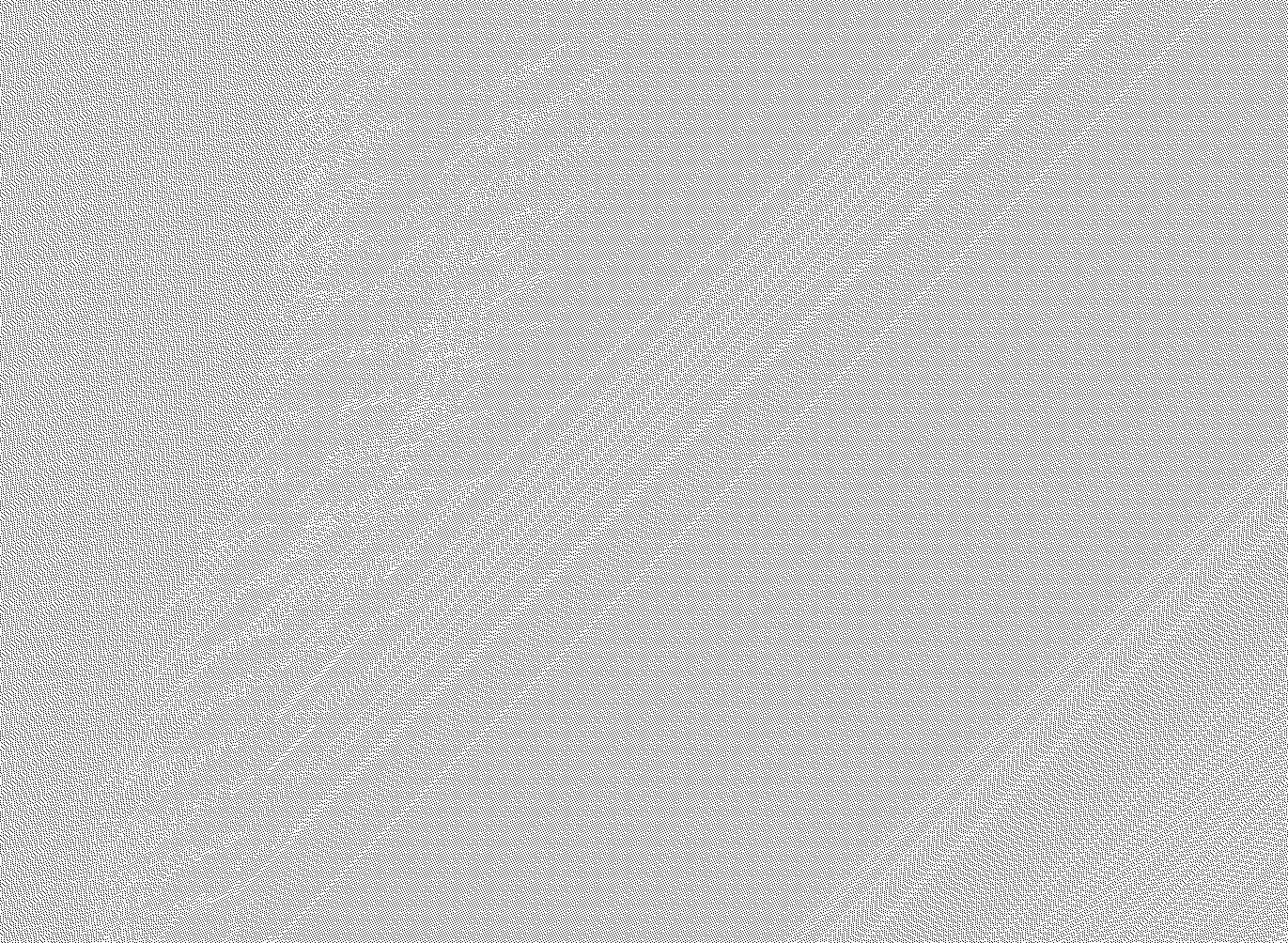}
  %\vfill{.5 cm}
  %\vspace{2.1 cm} 
  \includegraphics[width=\textwidth]{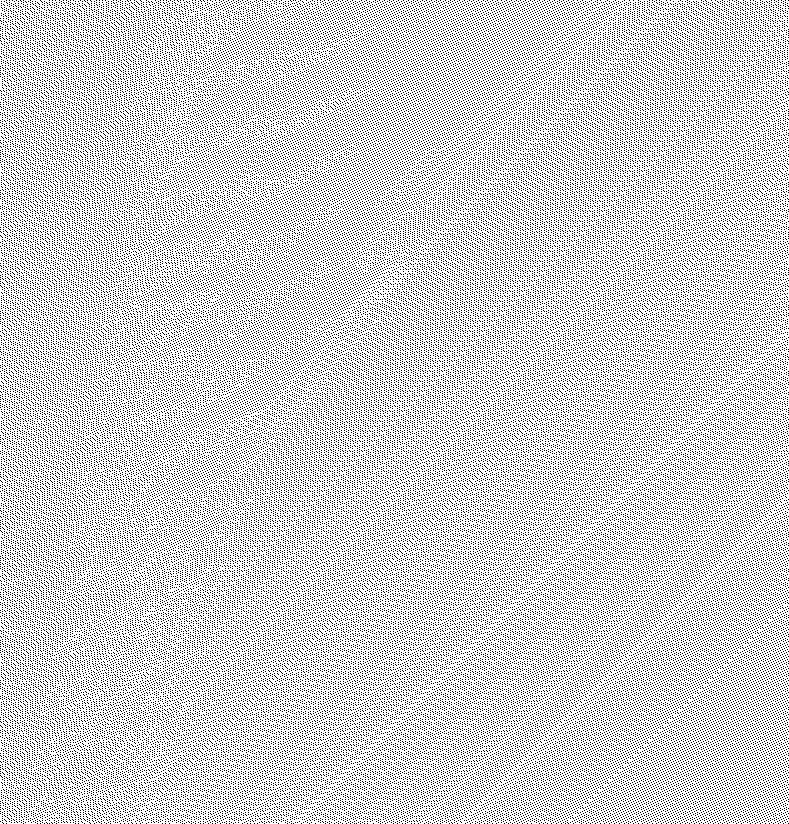}
  \end{center}
  \caption{Some later outcome patterns of the game $S$ in Figure~\ref{fig:chaos}; a chaotic emergence of a variety of `same-slope-gliders' appears to continue for larger board sizes.}
  \label{fig:chaos2}
  \end{figure}

\begin{figure}[htbp!]
  \begin{center}
  \includegraphics[width=\textwidth]
  {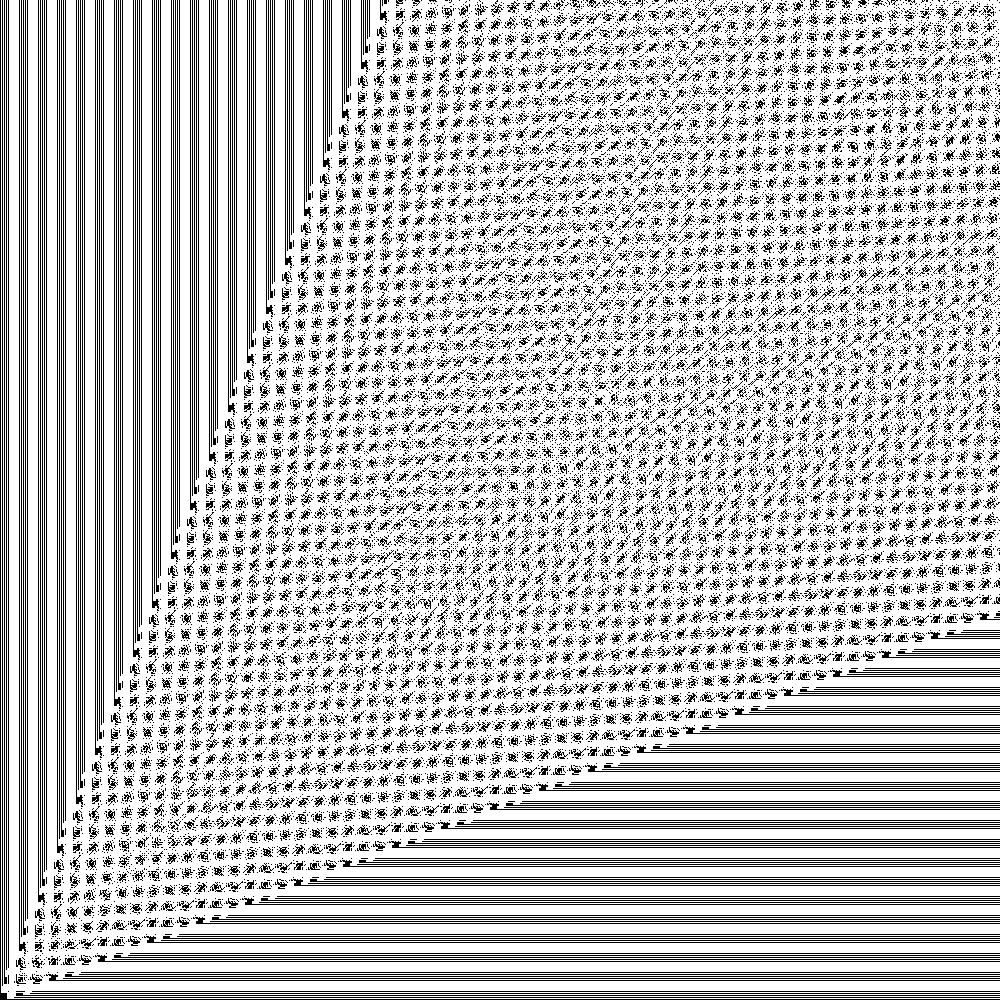}
  \end{center}
  \caption{The max-symmetric {\sc five-move} $\{(1,7),(7,1),(10,10)\}_{\mathrm{ms}}$. Max-symmetric rulesets often have unsettled outcome geometry; here visualized to 1000 by 1000.}
  \label{fig:maxsym1}
  \end{figure}
  
\begin{figure}[htbp!]
  \begin{center}
 \includegraphics[width=\textwidth]{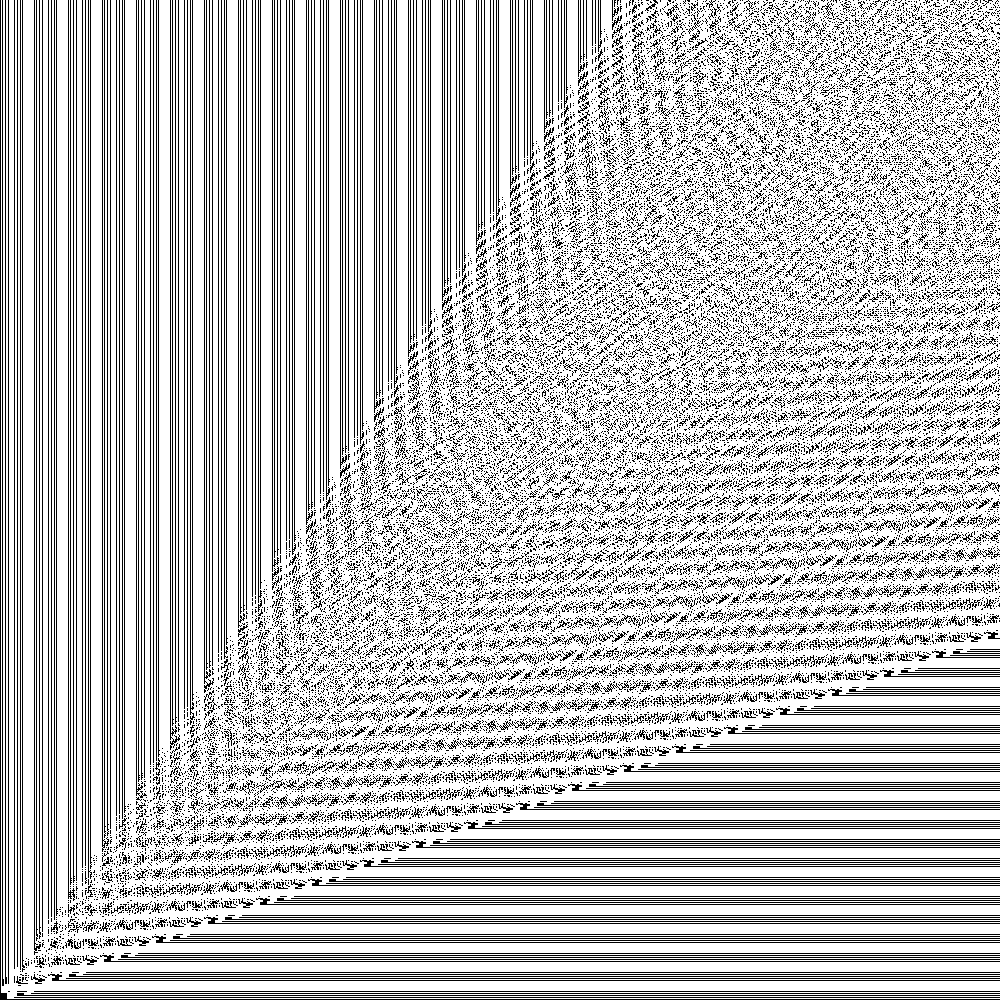}
 \end{center}
  \caption{The max-symmetric {\sc five-move} $\{(1,7),(7,1),(11,10)\}_{\mathrm{ms}}$ (1000 by 1000 positions).  }
  \label{fig:maxsym2}
  \end{figure}

\begin{figure}[htbp!]
  \begin{center}
  \includegraphics[width=12.5cm, clip=true, trim = 0 0 0 0]{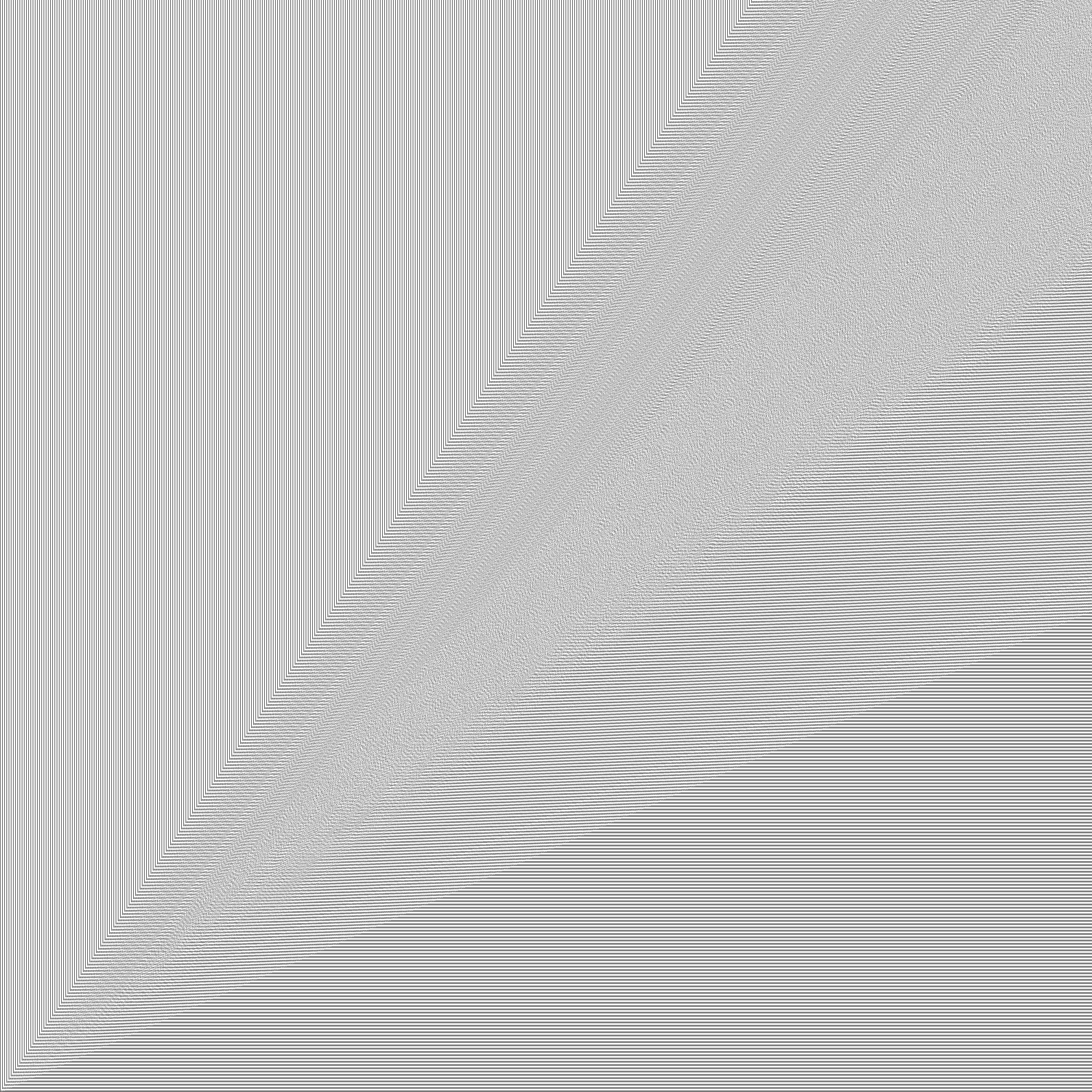}
  \includegraphics[width=6.2cm, clip=true, trim = 0 0 4850 4850]{215188maxSym5000.png}
   \includegraphics[width=6.2cm, clip=true, trim = 4850 4850 0 0]{215188maxSym5000.png}
  \end{center}
  \caption{The max-symmetric ruleset {\sc five-move}, $\{(2,1),(5,1),(8,8)\}_{\mathrm{ms}}$, 5000 by 5000 positions (below the first and last 150 by 150, respectively.} 
  \label{fig:maxsym3}
  \end{figure}

\begin{figure}[htbp!]
  \begin{center}
  \includegraphics[width=12.5cm, clip=true, trim = 0 0 0 0]{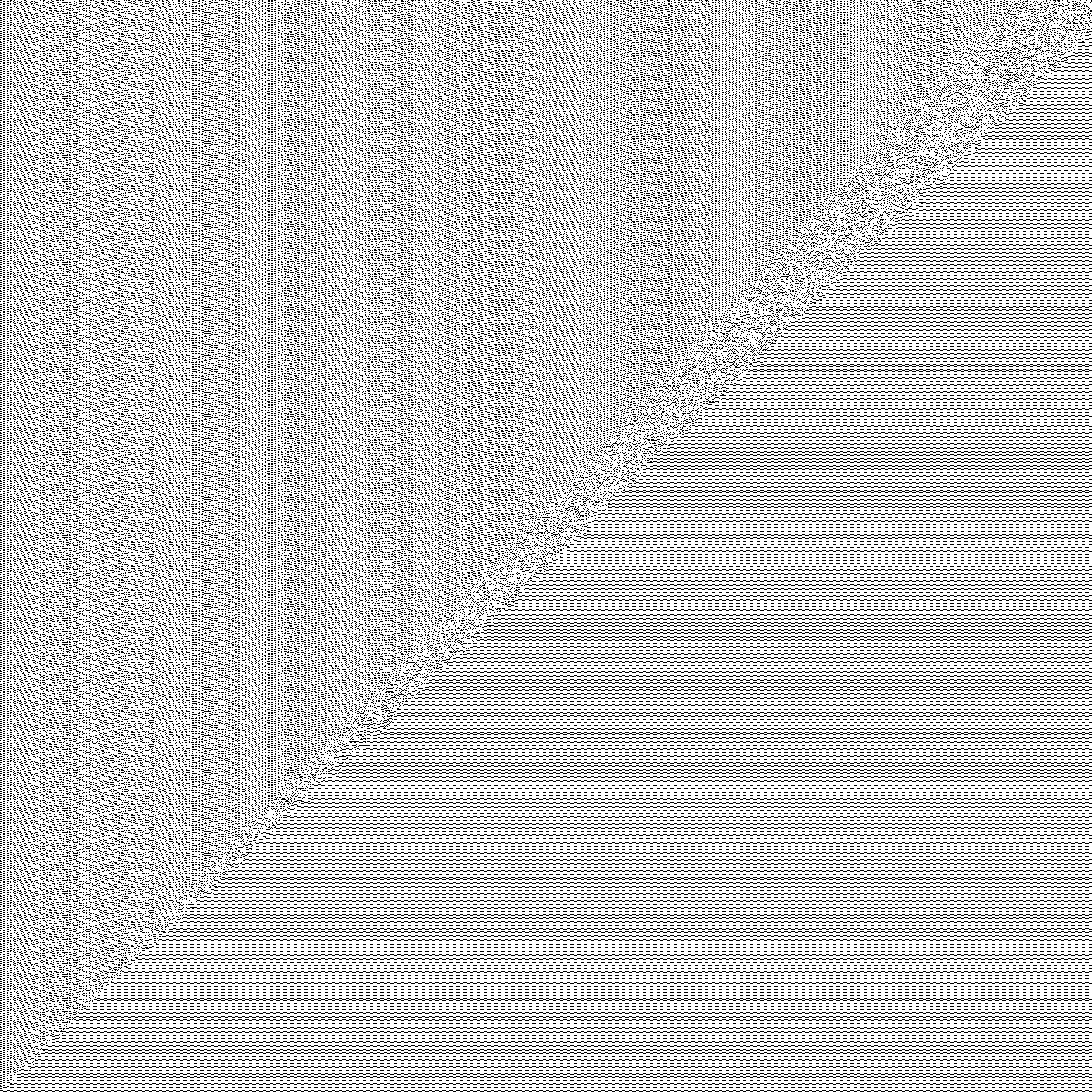}
  \includegraphics[width=6.2cm, clip=true, trim = 0 0 4850 4850]{025089Sym5000.png}
   \includegraphics[width=6.2cm, clip=true, trim = 4850 4850 0 0]{025089Sym5000.png}
  \end{center}
  \caption{The max-symmetric ruleset {\sc five-move}, $\{(0,2),(5,0),(8,8)\}_{\mathrm{ms}}$, 5000 by 5000 positions (below the first and last 150 by 150, respectively). Note the big variation of ornaments on the lower and upper regions that appear to be generated from the middle region. (Zoom in the picture to see this, and follow the edges of the middle region.) The next page illustrates a selection of this behavior.} 
  \label{fig:maxsym4}
  \end{figure}
  
  \begin{figure}
      \centering
     \includegraphics[width=14cm, clip=true, trim = 3400 1800 1400 2900]{025089Sym5000.png}
\end{figure}
  
  \clearpage
\noindent {\bf Acknowledgements.} We thank Raj Aryan Agrawal, Dr. Koki Suetsugu, Dr. Tomoaki Abuku, Dr. Hironori Kiya and Prof. Hideki Tsuiki for inspiring discussions regarding patterns and games. We thank Prof. Alex Fink for pointing out some important references.

\end{document}